\documentclass{article}

% % if you need to pass options to natbib, use, e.g.:
     %\PassOptionsToPackage{numbers, compress}{natbib}
     % \PassOptionsToPackage{compress, numbers}{natbib}
% % before loading neurips_2023

% % ready for submission
\usepackage[preprint,nonatbib]{neurips_2023}
% \usepackage[round]{natbib}  % <- this is included in the neurips style file; add only when neurips_2023 is commented out 

% % to compile a preprint version, e.g., for submission to arXiv, add add the
% % [preprint] option:
% %     \usepackage[preprint]{neurips_2023}

% % to compile a camera-ready version, add the [final] option, e.g.:
% %     \usepackage[final]{neurips_2023}

% % to avoid loading the natbib package, add option nonatbib:
% %    \usepackage[nonatbib]{neurips_2023}

% \usepackage[utf8]{inputenc} % allow utf-8 input
% \usepackage[T1]{fontenc}    % use 8-bit T1 fonts
% \usepackage{hyperref}       % hyperlinks
% \usepackage{url}            % simple URL typesetting
% \usepackage{booktabs}       % professional-quality tables
% \usepackage{amsfonts}       % blackboard math symbols
% \usepackage{nicefrac}       % compact symbols for 1/2, etc.
% \usepackage{microtype}      % microtypography
% \usepackage{xcolor}         % colors

\makeatletter
\renewcommand{\paragraph}{%
  \@startsection{paragraph}{4}%
  {\z@}{0ex \@plus 0ex \@minus .2ex}{-1em}%
  {\normalfont\normalsize\bfseries}%
}
\makeatother

\usepackage[maxbibnames=99,citestyle=numeric-comp, backend=biber, natbib=true, backref=true]{biblatex}
\setlength\bibitemsep{0.5\baselineskip}

\addbibresource{references.bib}

\usepackage{amsfonts}       % \mathbb
\usepackage{amsthm}         % proof
\usepackage{amsmath}
\usepackage{dsfont}
\usepackage{amssymb}
\usepackage{bbm}
\usepackage[normalem]{ulem}
\usepackage{verbatim} 

\usepackage{graphicx}       % \includegraphics
\usepackage{adjustbox}      % \adjustbox 
\usepackage{setspace}

\usepackage{url}
\usepackage[hidelinks]{hyperref}

\usepackage{subcaption}        % subfigures  
\usepackage[usenames]{xcolor}  % for coordinating edits
\usepackage{tikz}
\usepackage{qtree}
\usetikzlibrary{calc, 3d}

\usepackage{enumitem}
\usepackage{multirow}
\usepackage{makecell}
\usepackage{tabu, booktabs}
\usepackage{wrapfig}
\usepackage{blkarray}
\usepackage{booktabs} % tabulars 

\usepackage{algpseudocode}
\usepackage{algorithm,algorithmicx}
\algrenewcommand\algorithmicindent{.75em}

\usepackage{stackengine}
\newcommand\xrowht[2][0]{\addstackgap[.5\dimexpr#2\relax]{\vphantom{#1}}}

\usepackage{thm-restate}
\usepackage{thmtools}

\newtheorem{theorem}{Theorem} 
\newtheorem{lemma}[theorem]{Lemma}
\newtheorem{cor}[theorem]{Corollary}

\newtheorem{problem}[theorem]{Problem}

\newtheorem{prop}[theorem]{Proposition}

\theoremstyle{definition}
\newtheorem{example}[theorem]{Example}
\newtheorem{defn}[theorem]{Definition}
\newtheorem{rem}[theorem]{Remark}

\def\checkmark{\tikz\fill[scale=0.4](0,.35) -- (.25,0) -- (1,.7) -- (.25,.15) -- cycle;} 

%%%%% NEW MATH DEFINITIONS %%%%%

\usepackage{amsmath,amsfonts,bm}
\usepackage{blkarray}

% Mark sections of captions for referring to divisions of figures

% Highlight a newly defined term

% Figure reference, lower-case.

% Figure reference, capital. For start of sentence

% Section reference, lower-case.

% Section reference, capital.

% Reference to two sections.

% Reference to three sections.

% Reference to an equation, lower-case.
\def\eqref#1{equation~\ref{#1}}
% Reference to an equation, upper case

% A raw reference to an equation---avoid using if possible

% Reference to a chapter, lower-case.

% Reference to an equation, upper case.

% Reference to a range of chapters

% Reference to an algorithm, lower-case.

% Reference to an algorithm, upper case.

% Reference to a part, lower case

% Reference to a part, upper case

\def\1{\bm{1}}

% Random variables

% rm is already a command, just don't name any random variables m

% Random vectors

% Elements of random vectors

% Random matrices

% Elements of random matrices

% Vectors

% Elements of vectors

% Matrix

% Tensor
\DeclareMathAlphabet{\mathsfit}{\encodingdefault}{\sfdefault}{m}{sl}
\SetMathAlphabet{\mathsfit}{bold}{\encodingdefault}{\sfdefault}{bx}{n}

% Graph

% Sets

% Don't use a set called E, because this would be the same as our symbol
% for expectation.

% Entries of a matrix

% entries of a tensor
% Same font as tensor, without \bm wrapper

% The true underlying data generating distribution

% The empirical distribution defined by the training set

% The model distribution

% Stochastic autoencoder distributions

 % Laplace distribution

\newcommand{\R}{\mathbb{R}}

% Wolfram Mathworld says $L^2$ is for function spaces and $\ell^2$ is for vectors
% But then they seem to use $L^2$ for vectors throughout the site, and so does
% wikipedia.

 % See usage in notation.tex. Chosen to match Daphne's book.

\DeclareMathOperator*{\argmax}{arg\,max}
\DeclareMathOperator*{\argmin}{arg\,min}

\DeclareMathOperator{\Tr}{Tr}

\newcommand{\RBM}{\operatorname{RBM}}
\newcommand{\Vol}{\operatorname{Vol}}

\definecolor{forest}{RGB}{11,128,35}

\makeatletter
\newcommand{\wrapfill}{\par
\ifx\parshape\***@fudgeparshape
\nobreak
\ifnum\***@WF@wrappedlines>\@ne
\advance\***@WF@wrappedlines\***@ne
\vskip\***@WF@wrappedlines\baselineskip
\global\***@WF@wrappedlines\z@
\fi
\allowbreak
\***@finale
\fi
}

\title{Supermodular Rank: \\ Set Function Decomposition and Optimization}

\author{%
  Rishi Sonthalia\\
  UCLA\\
  \texttt{rsonthal@math.ucla.edu} \\
  \And 
  Anna Seigal\\
  Harvard University\\
  \texttt{aseigal@seas.harvard.edu}\\
  \And 
  Guido Mont\'ufar\\
  UCLA and MPI MiS\\
  \texttt{montufar@math.ucla.edu}
}

\begin{document}

\maketitle

\begin{abstract}
We define the supermodular rank of a function on a lattice. This is the smallest number of terms needed to decompose it into a sum of supermodular functions. The supermodular summands are defined with respect to different partial orders. We characterize the maximum possible value of the supermodular rank and describe the functions with fixed supermodular rank. We analogously define the submodular rank. We use submodular decompositions to optimize set functions. Given a bound on the submodular rank of a set function, we formulate an algorithm that splits an optimization problem into submodular subproblems. We show that this method improves the approximation ratio guarantees of several algorithms for monotone set function maximization and ratio of set functions minimization, at a computation overhead that depends on the submodular rank.
\smallskip 
\newline 
\emph{Keywords:} supermodular cone, imset inequality, 
set function optimization, greedy algorithm, approximation ratio 
\end{abstract}

\section{Introduction}

We study the optimization of set functions -- functions that are defined over families of subsets. The optimization of set functions is encountered in image segmentation~(\citet{1316848}), clustering~(\citet{NIPS2005_b0bef4c9}), 
feature selection~(\citet{JMLR:v13:song12a}), 
and data subset selection~(\citet{pmlr-v37-wei15}). 
Brute force optimization is often not viable since the individual function evaluations may be expensive and the search space has exponential size.
Therefore, one commonly relies on optimization heuristics that work for functions with particular structure. 
A classic function structure is supermodularity, which for a function on a lattice\footnote{I.e., a poset where any two elements $x,y$ have a greatest lower bound $x\wedge y$ and a least upper bound $x\vee y$.} requires that $f(x)+f(y) \leq f(x\wedge y) + f(x\vee y)$. Supermodularity can be regarded as a discrete analogue of convexity. 
A function is submodular if its negative is supermodular. Submodularity is interpreted as a ``diminishing returns'' property. 
Sub- and supermodularity can be used to obtain guarantees for greedy optimization of functions on a lattice,
in a similar way as convexity and concavity are used 
in iterative local optimization of functions on a vector space. 
Well-known examples are the results of \citet{Nemhauser1978AnAO} for greedy maximization of a monotone submodular set function subject to cardinality constraints and those of  \citet{Clinescu2011MaximizingAM} for arbitrary matroid constraints. 
Refinements have been obtained using gradations such as total curvature~(\citet{CONFORTI1984251,sviridenko2017optimal,filmus2012tight}). 

As many set functions of interest are not sub- or supermodular, 
relaxations have been considered, 
such as the submodularity ratio~(\citet{das2011submodular}), 
generalized curvature (\citet{CONFORTI1984251, bian2017guarantees, 10.5555/2634074.2634180, gatmiry2018non}), 
weak submodularity~(\citet{chen2018weakly,pmlr-v119-halabi20a}), 
$\epsilon$-submodularity~(\citet{JMLR:v9:krause08a}), 
submodularity over subsets~(\citet{10.5555/1347082.1347101}), 
or bounds by submodular functions~(\citet{NIPS2016_81c8727c}). 
We will discuss some of these works in more detail in Section~\ref{sec:set-function-optimization}.

We propose a new approach to grade the space of functions on a lattice. We define the \emph{supermodular rank} of a function 
to be the smallest number of terms in a decomposition into a sum of functions that are $\pi$-supermodular, where $\pi$ is a partial order on the domain. 
The partial order $\pi$ differs between summands. We define the submodular rank analogously. 
We consider multivariate functions 
with partial orders that are products of arbitrary linear orders. We focus on set functions that are real-valued on $2^{[n]}\cong \{0,1\}^{n}$, with partial order defined by an order on the values $\{ 0, 1\}$ of each input coordinate. 
We refine submodular rank to \emph{elementary} partial orders, restricted to vary from a reference order by a transpositions in one input coordinate. 
We optimize set functions via submodular decompositions.
The key insight for our optimization algorithms is that a function with elementary submodular rank $r$ can be split into $2^r$ submodular pieces.
The gradation of non-submodular pieces, via the elementary submodular rank, 
lets us obtain improved optimization guarantees. 

The supermodular functions are defined by linear inequalities, hence they comprise a polyhedral cone, which is called the supermodular cone.
These inequalities are called \emph{imset inequalities} in the study of conditional independence structures and graphical models~(\citet{studeny2010probabilistic}). They impose non-negative dependence of conditional probabilities. 
The supermodular cone has been a subject of intensive study~(\citet{matus1999,studeny2001}), especially the characterization of its extreme rays~(\citet{2011cones,Studeny}), which in general remains an open problem. 
Supermodular inequalities appear in semi-algebraic description of probabilistic graphical models with latent variables, such as mixtures of product distributions~(\citet{allman2015tensors}). \citet{Seigal2018MixturesAP} suggested that restricted Boltzmann machines could be described using Minkowski sums of supermodular cones (see Appendix~\ref{app:relations-probability-models}). 
We obtain the inequalities defining Minkowski sums of supermodular cones, which could be of interest in the description of latent variable graphical models.

\paragraph{Main contributions.}  
\begin{itemize}[leftmargin=*]
\item We introduce the notion of supermodular rank for functions on partially ordered sets (Definition~\ref{def:supermodular-rank}). 
The functions of supermodular rank at most $r$ comprise a union of Minkowski sums of at most $r$ supermodular cones. We characterize the facets of these sums (Theorem~\ref{cor:facets-minkowski-sums}) and find the maximum supermodular rank (Theorems~\ref{thm:rank}) and maximum elementary supermodular rank (Theorem~\ref{prop:maximum-elementary-rank}). 
\item 
We describe a procedure to compute low supermodular rank approximations of functions via existing methods for highly constrained convex optimization (Section~\ref{sec:minkowski-sums-of-supermodular-cones}). 
\item We show that the supermodular rank decomposition provides a grading of set functions that is useful for obtaining optimization guarantees. 
 We propose the \textsc{r-split} and \textsc{r-split ratio} algorithms for monotone set function and ratio of set functions optimization (Algorithms~\ref{alg:modified} and \ref{alg:modified-ratio-1}), which can trade off between computational cost and accuracy, with theoretical guarantees (Theorems~\ref{thm:main-result}, \ref{thm:main-result2a} and \ref{thm:main-result2}). 
These improve on previous guarantees for greedy algorithms based on approximate submodularity (Tables~\ref{tab:max-compare} and~\ref{tab:max-compare-ratio}). 

\item Experiments illustrate that our methods are applicable in diverse settings and can significantly improve the quality of the solutions obtained upon optimization (Section~\ref{sec:experimental-results}).\footnotemark  

\footnotetext{Computer code for our algorithms and experiments is provided in \href{https://anonymous.4open.science/r/Submodular-Set-Function-Optimization-8B0E/README.md}{[anonymous GitHub repo]}.} 
\end{itemize}

\section{Supermodular Cones} 

In this section we introduce our settings and describe basic properties with proofs in Appendix~\ref{app:background-supermodular}. 

\begin{defn} \label{def:supermodular} 
Let $X$ be a set with a partial order such that for any $x,y\in X$, there is a greatest lower bound $x\wedge y$ and a least upper bound $x\vee y$ (this makes $X$ a lattice). 
A function $f \colon X \to \mathbb{R}$ is \emph{supermodular} if, for all $x,y \in X$, 
    \begin{equation} \label{eq:supermodular}
        f(x) + f(y) \le f(x\wedge y) + f(x \vee y).
    \end{equation} 
The function $f$ is \emph{submodular} (resp. \emph{modular}) if 
$\leq$ in (\ref{eq:supermodular}) is replaced by $\ge$ (resp. $=$). 
\end{defn}

\begin{defn} 
\label{def:pi_supermodularity}
We fix $X=\{0,1\}^n$ and consider a tuple of linear orders $\pi=(\pi_1,\ldots, \pi_n)$ on $\{0,1\}$. 
Our partial order is the product of the linear orders: for any $x,y\in X$, we have $x\leq_\pi y$ if and only if $x_i\leq_{\pi_i} y_i$ for all $i\in[n]$. 
For each $i$, there are two possible choices of $\pi_i$, the identity $0\leq_{\pi_i}1$, and the transposition $1\leq_{\pi_i}0$. 
A function that is supermodular with respect to $\pi$ is called \emph{$\pi$-supermodular}. 
\end{defn}

For fixed $X$, the condition of $\pi$-supermodularity is defined by requiring that certain homogeneous linear inequalities hold. Hence the set of $\pi$-supermodular functions on $X$ is a convex polyhedral cone. We denote this cone by $\mathcal{L}_{\pi}\subseteq \mathbb{R}^X$. 
Two product orders generate the same supermodular cone if and only if one is the total reversion of the other, and hence there are $\frac{1}{2} \prod_i |X_i|!$ distinct cones $\mathcal{L}_\pi$, see \citet{allman2015tensors}. 
The cone of $\pi$-\emph{submodular} functions on $X$ is $-\mathcal{L}_{\pi}$.

In the case $X = \{0,1\}^n$, the description of the supermodular cone in Equation~\ref{eq:supermodular} involves $\binom{2^n}{2}$ linear inequalities, one for each pair $x,y\in X$. 
However, the cone can be described using just $\binom{n}{2}2^{n-2}$ facet-defining inequalities~(\citet{Kuipers2010AGO}). These are the elementary imset inequalities~(\citet{studeny2010probabilistic}). They compare $f$ on 
elements of $X$ that take the same value on all but two coordinates. 
Identifying binary vectors of length $n$ with their support sets in $[n]$, 
the elementary imset inequalities are 
\begin{equation} \label{eq:imset}
      f(z \cup \{i\}) + f(z \cup \{j\}) \le f(z) + f(z \cup \{i,j\}),
\end{equation}
where $i,j \in [n]$, $i \neq j$ and $z \subseteq [n] \setminus \{i,j\}$. 

We partition the $\binom{n}{2}2^{n-2}$ elementary imset inequalities into $\binom{n}{2}$ sets of $2^{n-2}$ inequalities, as follows. 

\begin{defn} \label{def:facets} 
For fixed $i,j\in[n]$, $i\neq j$, we collect the inequalities in Equation~\ref{eq:imset} for all $z\subseteq [n]\setminus\{i,j\}$ 
into matrix notation as $A^{(ij)}f \geq 0$. 
We call $A^{(ij)}\in\mathbb{R}^{2^{n-2}\times 2^n}$ the \emph{$(ij)$ elementary imset matrix}. 
Each row of 
$A^{(ij)}$ has two entries equal to $1$ and two entries equal to $-1$. 
\end{defn}

The elementary imset characterization extends to $\pi$-supermodular cones $\mathcal{L}_{\pi}$ with general $\pi$. 

\begin{defn}
\label{def:the_sign_vector}
    Fix a tuple $\pi = (\pi_1, \ldots, \pi_n)$ of linear orders on $\{0, 1\}$. 
Its \emph{sign vector} is $\tau = (\tau_1, \ldots, \tau_n) \in \{ \pm 1 \}^n$, where 
$\tau_i = 1$ if $0 <_{\pi_i} 1$ and $\tau_i=-1$ if $1 <_{\pi_i} 0$. 
\end{defn}

\begin{restatable}{lemma}{lemfacets}
\label{lem:facets} 
Fix a tuple of linear orders $\pi$ with sign vector $\tau$. Then a function $f\colon\{0,1\}^n\to\mathbb{R}$ is $\pi$-supermodular if and only if 
$\tau_i\tau_j A^{(ij)} f \ge 0$, for all $i, j \in [n]$ with $i \neq j$.  
\end{restatable}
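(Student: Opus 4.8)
The plan is to reduce the general statement to the already-cited characterization for the identity order (where $\tau=(1,\ldots,1)$, $\pi$-supermodularity is ordinary supermodularity, and the Kuipers facet description gives $A^{(ij)}f\ge 0$) by means of a coordinate reflection. Let $F=\{k\in[n]:\tau_k=-1\}$ be the set of coordinates whose order is reversed, and define the involution $\rho\colon\{0,1\}^n\to\{0,1\}^n$ that flips exactly the coordinates in $F$, i.e.\ $\rho(x)_k=1-x_k$ for $k\in F$ and $\rho(x)_k=x_k$ otherwise. First I would check that $\rho$ is an order isomorphism from $(\{0,1\}^n,\le_\pi)$ onto $(\{0,1\}^n,\le)$, where $\le$ is the product of the identity orders: coordinatewise, $x_k\le_{\pi_k}y_k$ is equivalent to $\rho(x)_k\le\rho(y)_k$ by inspecting the two cases $\tau_k=\pm1$. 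In particular $\rho$ intertwines the lattice operations, $\rho(x\wedge_\pi y)=\rho(x)\wedge\rho(y)$ and $\rho(x\vee_\pi y)=\rho(x)\vee\rho(y)$.

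Writing $g:=f\circ\rho$ and substituting $u=\rho(x)$, $v=\rho(y)$, the defining inequality $f(x)+f(y)\le f(x\wedge_\pi y)+f(x\vee_\pi y)$ becomes $g(u)+g(v)\le g(u\wedge v)+g(u\vee v)$; since $\rho$ is a bijection, $f$ is $\pi$-supermodular if and only if $g$ is supermodular with respect to the identity order. By the elementary imset characterization of \citet{Kuipers2010AGO} recalled above, $g$ is supermodular if and only if $A^{(ij)}g\ge 0$ for all $i\ne j$.

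It then remains to relate $A^{(ij)}g$ to $A^{(ij)}f$, and this sign bookkeeping is the heart of the argument. Writing each row of the imset matrix as the alternating sum $(A^{(ij)}f)_z=\sum_{S\subseteq\{i,j\}}(-1)^{|S|}f(z\cup S)$ over a base $z\subseteq[n]\setminus\{i,j\}$, I would substitute $g=f\circ\rho$ and note that $\rho(z\cup S)=z'\cup S'$ with $z'=\rho(z)$ and $S'=S\triangle(F\cap\{i,j\})$. Re-summing over $S'$ and using $(-1)^{|S|}=(-1)^{|S'|}(-1)^{|F\cap\{i,j\}|}$ together with $(-1)^{|F\cap\{i,j\}|}=\tau_i\tau_j$ gives the key identity
\[
  (A^{(ij)}g)_z=\tau_i\tau_j\,(A^{(ij)}f)_{\rho(z)}.
\]
Since $z\mapsto\rho(z)$ permutes the bases $z\subseteq[n]\setminus\{i,j\}$, the full system $A^{(ij)}g\ge 0$ holds if and only if $\tau_i\tau_j A^{(ij)}f\ge 0$, and chaining the three equivalences proves the lemma.

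The only step requiring real care is this last one: one must confirm that reversing exactly one of the two coordinates $i,j$ negates the imset functional while reversing zero or both preserves it, which is precisely the content of the factor $(-1)^{|F\cap\{i,j\}|}=\tau_i\tau_j$. Everything else is a routine verification that $\rho$ respects the lattice structure, so I expect the sign parity to be the sole place where an error could creep in.
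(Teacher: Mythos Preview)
Your argument is correct and follows the same underlying idea as the paper's proof: reduce to the identity order $\tau=(1,\ldots,1)$ (where the Kuipers facet description applies) and track how the sign of each imset block $A^{(ij)}$ changes. The paper does this by a short case analysis on $(\tau_i,\tau_j)\in\{\pm1\}^2$, computing $x\wedge_\pi y$ and $x\vee_\pi y$ directly and reading off that exactly one reversed coordinate flips the inequality while zero or two leave it unchanged; you package the same computation more systematically via the global order isomorphism $\rho$ and the alternating-sum formula $(A^{(ij)}f)_z=\sum_{S\subseteq\{i,j\}}(-1)^{|S|}f(z\cup S)$, which makes the parity factor $(-1)^{|F\cap\{i,j\}|}=\tau_i\tau_j$ explicit. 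Your route is a bit more structural and would generalize more readily (e.g.\ to non-binary $X_i$), while the paper's is a two-line direct check.

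One small notational slip worth fixing: you write $z'=\rho(z)$, but when $F\cap\{i,j\}\neq\emptyset$ the set $\rho(z)=z\triangle F$ picks up elements of $\{i,j\}$ and is no longer a subset of $[n]\setminus\{i,j\}$. What you need (and clearly intend) is $z'=z\triangle(F\setminus\{i,j\})$, i.e.\ the restriction of $\rho$ to the coordinates outside $\{i,j\}$. With that correction the decomposition $\rho(z\cup S)=z'\cup S'$ with $S'=S\triangle(F\cap\{i,j\})$ holds, $z\mapsto z'$ is a bijection on the bases, and the rest of your sign bookkeeping goes through verbatim.
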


\begin{example}[Three-bit supermodular functions] 
\label{eg:m32}
For $X = \{ 0, 1\}^3$,
we have $\frac12 2^3=4$ supermodular cones $\mathcal{L}_\pi$, given by sign vectors $\tau \in \{\pm 1\}^3$ up to global sign change. 
Each cone is described by $\binom{3}{2} \times 2^1 = 6$ elementary imset inequalities, 
collected into three matrices $A^{(ij)} \in \mathbb{R}^{2 \times 8}$.
By Lemma~\ref{lem:facets},
the sign of the inequality depends on the product $\tau_i\tau_j$:
\begin{center}
\begin{tabular}{c|ccc}
     $\tau$ & $A^{(12)}f$ & $A^{(13)}f$ & $A^{(23)}f$ \\
     \hline
     $(1, 1, 1)$ & $+$ & $+$ & $+$ \\ 
     $(-1, 1, 1)$ & $-$ & $-$ & $+$ \\ 
     $(1, -1, 1)$ & $-$ & $+$ & $-$ \\ 
     $(1, 1, -1)$ & $+$ & $-$ & $-$ 
\end{tabular}
\end{center}
\end{example}
In Appendix~\ref{app:computing-volume} we show that supermodular cones have tiny relative volume, at most $(0.85)^{2^n}$. 

%%%%%%%%%%%%%%%%%%%%%%%%%%%%%%%%%%%%%%%%%%%%%%%%%%%%%%%%%%%%%%%%%%%%%%%%%%%%%%%%
\section{Supermodular Rank} 
\label{sec:minkowski-sums-of-supermodular-cones}

\subsection{Minkowski Sums of Supermodular Cones} 

We describe the facet defining inequalities of Minkowski sums of $\pi$-supermodular cones. 
Given two cones $\mathcal{P}$ and $\mathcal{Q}$, their Minkowski sum $\mathcal{P} + \mathcal{Q}$ is the set of points $p + q$, where $p \in \mathcal{P}$ and $q \in \mathcal{Q}$. 
For a partial order $\pi$ with sign vector $\tau$, we sometimes write $\mathcal{L}_\tau$ for $\mathcal{L}_\pi$.

\begin{example}[Sum of two three-bit supermodular cones] 
\label{eg:sum-m32}
We saw in Example~\ref{eg:m32} that there are $4$ distinct $\pi$-supermodular cones $\mathcal{L}_\pi$, each defined by $\binom{3}{2} 2^{3-2} = 6$ elementary imset inequalities. The inequalities defining the Minkowski sum $\mathcal{L}_{(1,1,1)} + \mathcal{L}_{(-1,1,1)}$ are 
$A^{(23)}f \geq 0 $. 
That is, the facet inequalities of the Minkowski sum are those inequalities that hold on both individual cones. 
\end{example}

We develop general results on Minkowski sums of cones and apply them to the case of supermodular cones in Appendix~\ref{app:details-minkowski-sums-supermodular-cones}. 
We show that the observation in Example~\ref{eg:sum-m32} holds in general: sums of $\pi$-supermodular cones are defined by the facet inequalities that are common to all supermodular summands. 
In particular, the Minkowski sum is as large a set as one could expect.

\begin{restatable}[Facet inequalities of sums of supermodular cones]{theorem}{facetsminkowskisums}
\label{cor:facets-minkowski-sums}
Fix a tuple of partial orders $\pi^{(1)},\ldots, \pi^{(m)}$. The Minkowski sum of supermodular cones $\mathcal{L}_{\pi^{(1)}} + \cdots + \mathcal{L}_{\pi^{(m)}}$ is a convex polyhedral cone whose facet inequalities are the facet defining inequalities common to all $m$ cones $\mathcal{L}_{\pi^{(i)}}$.
\end{restatable}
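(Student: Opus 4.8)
The plan is to pass to dual cones and reduce the statement to a single containment about their intersection. First, a Minkowski sum of finitely many polyhedral cones is again a polyhedral cone, so $\mathcal{L}_{\pi^{(1)}}+\cdots+\mathcal{L}_{\pi^{(m)}}$ is polyhedral. The easy inclusion is immediate: if a homogeneous inequality $\langle a,f\rangle\ge 0$ holds on every $\mathcal{L}_{\pi^{(k)}}$, then for $f=f_1+\cdots+f_m$ with $f_k\in\mathcal{L}_{\pi^{(k)}}$ we get $\langle a,f\rangle=\sum_k\langle a,f_k\rangle\ge 0$; in particular the sum is contained in the cone $K$ cut out by the inequalities common to all summands. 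By Lemma~\ref{lem:facets} the facet normals of $\mathcal{L}_{\pi^{(k)}}$ are the signed imset rows $s_k(ij)\,(A^{(ij)})_z$, where $s_k(ij)=\tau^{(k)}_i\tau^{(k)}_j$ and $z\subseteq[n]\setminus\{i,j\}$; a pair $(ij)$ is \emph{common} exactly when $s_k(ij)$ does not depend on $k$, and $K$ is defined by the common signed imsets. It remains to prove the reverse inclusion $K\subseteq\mathcal{L}_{\pi^{(1)}}+\cdots+\mathcal{L}_{\pi^{(m)}}$.

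For this I would dualize. Writing $C$ for the cone generated by the common signed imsets, we have $K=C^{*}$, while duality for Minkowski sums gives $\left(\mathcal{L}_{\pi^{(1)}}+\cdots+\mathcal{L}_{\pi^{(m)}}\right)^{*}=\bigcap_k \mathcal{L}_{\pi^{(k)}}^{*}$, and each $\mathcal{L}_{\pi^{(k)}}^{*}$ is the cone generated by its signed imsets $s_k(ij)\,(A^{(ij)})_z$. Taking duals once more, the inclusion $K\subseteq\sum_k\mathcal{L}_{\pi^{(k)}}$ is equivalent to $\bigcap_k\mathcal{L}_{\pi^{(k)}}^{*}\subseteq C$; the reverse containment is clear since every common generator lies in each dual cone. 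Thus the theorem collapses to showing that intersecting the dual cones creates no new extreme rays: every $g\in\bigcap_k\mathcal{L}_{\pi^{(k)}}^{*}$ is a nonnegative combination of the common signed imsets.

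The heart of the argument is a uniqueness-of-coefficients phenomenon. If the imset rows $(A^{(ij)})_z$ were linearly independent, the claim would follow at once: writing $g=\sum_{ij,z}\gamma_{ij,z}(A^{(ij)})_z$ uniquely, membership $g\in\mathcal{L}_{\pi^{(k)}}^{*}$ forces $\gamma_{ij,z}\,s_k(ij)\ge 0$ for every $k$, so on a \emph{split} pair (where $s_k(ij)$ takes both signs) we must have $\gamma_{ij,z}=0$, leaving $g$ supported on common generators with the correct sign, hence $g\in C$. The main obstacle is precisely that the imsets are \emph{not} independent: they obey the third-difference syzygies $(A^{(ij)})_{z\cup\{\ell\}}-(A^{(ij)})_z=(A^{(i\ell)})_{z\cup\{j\}}-(A^{(i\ell)})_z$ and their cyclic variants, so the coefficients $\gamma_{ij,z}$ are underdetermined and split generators cannot be cancelled term by term. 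I would handle this by proving a general lemma on Minkowski sums of cones whose facet normals carry such a relation structure, showing that any representation of $g$ can be rewritten, by pushing mass along the syzygies, into one using only common generators; verifying the hypotheses of that lemma for the imset relations is the technical crux.

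Finally, the common inequalities are genuinely the facets of the sum, not a redundant superset. Facets of $\sum_k\mathcal{L}_{\pi^{(k)}}$ correspond to extreme rays of its dual $\bigcap_k\mathcal{L}_{\pi^{(k)}}^{*}=C$, and $C$ is generated by the common signed imsets; since each such imset is an extreme ray of the individual cone $\mathcal{L}_{\pi^{(k)}}^{*}$ by the Kuipers characterization~(\citet{Kuipers2010AGO}), and $C\subseteq\mathcal{L}_{\pi^{(k)}}^{*}$, it remains extreme in the smaller cone $C$. Hence the extreme rays of $C$ are exactly the common imsets, and the facet inequalities of the Minkowski sum are exactly those common to all $m$ summands.
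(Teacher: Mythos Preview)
Your dual reformulation is correct up to the point you flag yourself: the entire content of the theorem reduces to the inclusion $\bigcap_k\mathcal{L}_{\pi^{(k)}}^{*}\subseteq C$, and you do not prove it. Saying you ``would prove a general lemma'' about pushing mass along the third-difference syzygies is not a proof; it is not even clear such a rewriting is always possible, since those syzygies couple different pairs $(ij),(i\ell),(j\ell)$ whose ``common'' status may disagree, and you give no mechanism that guarantees the split coefficients can be driven to zero while keeping all common coefficients nonnegative. So as written this is a gap, not an alternative argument.

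The paper avoids the dual computation entirely and works primally. The key step is an explicit construction (their Lemma~\ref{lem:1sign}): for any sign pattern $\xi\in\{-1,0,1\}^{\binom{n}{2}}$ there is a function $v$ with $A^{(ij)}v=0$ when $\xi_{ij}=0$ and $\xi_{ij}A^{(ij)}v>0$ otherwise, obtained by summing functions $z^{(ij)}$ that depend only on coordinates $i,j$. Given two cones $\mathcal{L}_{\xi^{(1)}},\mathcal{L}_{\xi^{(2)}}$, pick $v$ for a suitable $\tilde\xi$; then for any $z$ satisfying the common inequalities one has $z+\lambda v\in\mathcal{L}_{\xi^{(1)}}$ and $z-\lambda v\in\mathcal{L}_{\xi^{(2)}}$ for $\lambda$ large, so $z=\tfrac12(z+\lambda v)+\tfrac12(z-\lambda v)$ lies in the Minkowski sum (Propositions~\ref{prop:sum} and~\ref{prop:sum-xi}). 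This two-cone statement iterates to $m$ cones. The point is that the interior witness $v$ lets you decompose directly, with no need to untangle the linear dependencies among imsets that block your dual argument. If you want to salvage your route, the cleanest fix is to observe that the same Lemma~\ref{lem:1sign} exhibits a point in the relative interior of each $\mathcal{L}_{\pi^{(k)}}$ that lies on every hyperplane corresponding to a split pair, which by standard polarity forces the dual intersection to be generated by the common normals; but that is essentially the paper's idea rephrased.

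Your final paragraph on irredundancy is fine once the main equality is established: a common signed imset is extreme in each $\mathcal{L}_{\pi^{(k)}}^{*}$, hence extreme in the subcone $C$.
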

\begin{proof}[Proof idea]
\vspace{-1ex}
Each 
$\mathcal{L}_{\pi^{(i)}}$ is defined by picking sides of a fixed set of hyperplanes. All supermodular cones are full dimensional. 
If two cones lie on the same side of a hyperplane, 
then so does their Minkowski sum. Hence the 
sum is contained in the cone defined by the facet defining inequalities common to all $m$ cones $\mathcal{L}_{\pi^{(i)}}$. We show that the 
sum fills this cone. This generalizes the fact that a full dimensional 
cone $\mathcal{P} \subseteq \mathbb{R}^d$ satisfies $\mathcal{P} + (- \mathcal{P}) = \mathbb{R}^d$. 
\end{proof}

\subsection{Maximum Supermodular Rank}

\begin{defn}
\label{def:supermodular-rank}
The \emph{supermodular rank} of a function $f\colon \{0,1\}^n \to \mathbb{R}$ is the smallest $r$ such that 
$f=f_1 + \cdots + f_r$, where each $f_i$ is a $\pi$-supermodular function for some $\pi$.
\end{defn}

\begin{restatable}[Maximum supermodular rank]{theorem}{theoremrank} 
\label{thm:rank} 
For $n \ge 3$, the maximum supermodular rank of a function 
$f\colon \{0,1\}^n \to \mathbb{R}$ is $\lceil \log_2 n\rceil + 1$. 
Moreover, submodular functions in the interior of $-\mathcal{L}_{(1,\ldots,1)}$ have supermodular rank $\lceil \log_2 n\rceil$. 
\end{restatable}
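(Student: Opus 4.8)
The plan is to reduce the rank computation to a combinatorial statement about the sign vectors of the chosen orders, and then to count. By Definition~\ref{def:supermodular-rank}, $f$ has supermodular rank at most $r$ iff $f$ lies in some Minkowski sum $\mathcal{L}_{\tau^{(1)}}+\cdots+\mathcal{L}_{\tau^{(r)}}$. By Lemma~\ref{lem:facets} each summand is cut out by the sign constraints $\tau^{(k)}_i\tau^{(k)}_j A^{(ij)}f\ge 0$, and by Theorem~\ref{cor:facets-minkowski-sums} the sum is cut out exactly by those pairs $(i,j)$ on which all summands impose a common sign $s_{ij}=\tau^{(k)}_i\tau^{(k)}_j$. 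I would record the $r$ sign vectors as the columns $c_1,\dots,c_n\in\{\pm1\}^r$ of an $r\times n$ sign matrix; then pair $(i,j)$ carries the constraint $+A^{(ij)}f\ge0$ iff $c_i=c_j$, carries $-A^{(ij)}f\ge0$ iff $c_i=-c_j$, and is unconstrained otherwise.

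Next I would translate membership of a fixed $f$. Call a pair $(i,j)$ \emph{mixed} if $A^{(ij)}f$ has entries of both signs; a mixed pair admits no consistent sign $s_{ij}$, so it must be left unconstrained, i.e.\ $c_i\neq\pm c_j$. A pair on which $A^{(ij)}f$ has one definite sign may instead be made to agree, provided $s_{ij}$ matches that sign. Thus $\mathrm{suprank}(f)$ is the least $r$ admitting columns $c_1,\dots,c_n\in\{\pm1\}^r$ that are distinct up to sign on every mixed pair and sign-compatible on the rest.

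For the maximum rank I would prove matching bounds. For the upper bound, given arbitrary $f$ and $r=\lceil\log_2 n\rceil+1$: since $\{\pm1\}^r$ splits into $2^{r-1}\ge n$ classes under global sign, I can pick $c_1,\dots,c_n$ pairwise distinct up to sign, so every pair is unconstrained, the Minkowski sum has no facets and equals $\mathbb{R}^X$ (the general form of $\mathcal{P}+(-\mathcal{P})=\mathbb{R}^d$), whence $f$ lies in it. For the lower bound I would exhibit a single $f$ that is mixed on every pair: taking $f(S)=g(|S|)$ makes $A^{(ij)}f$ equal to the negated discrete second difference $-(g(|z|{+}2)-2g(|z|{+}1)+g(|z|))$, which for $n\ge3$ can be made to change sign in $|z|$ by a suitable $g$ (its values at $|z|=0$ and $|z|=1$ both occur), so every pair is mixed. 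For such $f$ every pair must be unconstrained, forcing $c_1,\dots,c_n$ pairwise distinct up to sign, which needs $2^{r-1}\ge n$, i.e.\ $r\ge\lceil\log_2 n\rceil+1$. Constructing and verifying this worst-case $f$ is the main obstacle; the cardinality-symmetric choice reduces it to a single sign change.

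Finally, for the ``moreover'' claim let $f$ lie in the interior of $-\mathcal{L}_{(1,\dots,1)}$, so every $A^{(ij)}f$ is strictly negative and hence every pair has definite sign $-$. Now a pair is realizable whenever it is unconstrained or agrees with $s_{ij}=-1$, i.e.\ whenever $c_i\neq c_j$; only equal columns, which would force the $+1$ constraint, are forbidden. The least $r$ with $n$ pairwise distinct vectors in $\{\pm1\}^r$ is governed by $2^r\ge n$, giving the upper bound $r=\lceil\log_2 n\rceil$ by choosing distinct columns (antipodal pairs are allowed and supply the correct sign). The matching lower bound is pigeonhole: if $r<\lceil\log_2 n\rceil$ then $2^r<n$ forces two equal columns, imposing $A^{(ij)}f\ge0$ and contradicting strict submodularity. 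This yields supermodular rank exactly $\lceil\log_2 n\rceil$.
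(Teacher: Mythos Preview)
Your proposal is correct and follows essentially the same approach as the paper: reduce via Theorem~\ref{cor:facets-minkowski-sums} and Lemma~\ref{lem:facets} to a combinatorial problem about sign vectors (your column encoding $c_1,\dots,c_n\in\{\pm1\}^r$ is exactly the paper's collection of $\tau^{(k)}$), then count equivalence classes modulo global sign. Your lower-bound witness $f(S)=g(|S|)$ is more explicit than the paper's, which simply asserts that a function mixed on every block exists ``since each inequality involves distinct indices''; and your treatment of the strictly submodular case via the pigeonhole $2^r\ge n$ on distinct (not merely distinct-up-to-sign) columns is more direct than the paper's Proposition~\ref{prop:supermodular-rank-submodular}, which argues by contradiction with the maximum-rank statement.
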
 
\begin{proof}[Proof idea]
\vspace{-1ex}
We find $\lceil \log_2 n\rceil + 1$ $\pi$-supermodular cones whose Minkowski sum fills the space. 
To remove as many inequalities as possible, we choose cones that share as few inequalities as possible, by Theorem~\ref{cor:facets-minkowski-sums}. The sign vectors should differ at about $n/2$ coordinates, 
by Lemma~\ref{lem:facets}. A recursive argument shows that $\lceil \log_2 n\rceil + 1$ cones suffices.
\end{proof}

Submodular functions $f$ in the interior of $-\mathcal{L}_{(1\ldots 1)}$  do not have full rank. They satisfy $A^{(ij)}f < 0$. We believe that full rank functions $f$ do not satisfy $A^{(ij)} f > 0$ or $A^{(ij)}f < 0$, for any $i \neq j$. 

\begin{example} 
A function $f\colon\{0,1\}^3\to\mathbb{R}$ can be written as a sum of at most three $\pi$-supermodular functions. Furthermore, there are functions that cannot be written as a sum of two $\pi$-supermodular functions. 
Indeed, Example~\ref{eg:m32} shows that any two $\pi$-supermodular cones share one block of inequalities, and three do not share any inequalities. 
\end{example}

Definition~\ref{def:supermodular-rank} 
has implications for implicit descriptions of certain probabilistic graphical models: models with a single binary hidden variable as well as restricted Boltzmann machines (RBMs). Specifically, a probability distribution in the RBM model with $r$ hidden variables has supermodular rank at most $r$. We discuss these connections in Appendix~\ref{app:relations-probability-models}.

\newcolumntype{L}[1]{>{\raggedright\let\newline\\\arraybackslash\hspace{0pt}}m{#1}}
\newcolumntype{C}[1]{>{\centering\let\newline\\\arraybackslash\hspace{0pt}}m{#1}}
\newcolumntype{R}[1]{>{\raggedleft\let\newline\\\arraybackslash\hspace{0pt}}m{#1}}

\begin{wrapfigure}[10]{R}{0.35\textwidth}
\begin{minipage}{0.35\textwidth}
\vspace{-.9cm}
\begin{table}[H]
    \caption{Volumes} 
    \label{tab:volume}
    \centering
    \setlength{\tabcolsep}{1pt}
    \renewcommand{\arraystretch}{.8}
\small     
    \begin{tabular}{c|cccc}
    \toprule
   \multicolumn{5}{c}{Submodular} \\
   \midrule 
     $n$   & rank 1 & rank 2 & rank 3 & rank 4   \\
     \midrule
     3 & 12.5\% & 74.9\% & 100\% & - \\
     4 & 0.0072\% & 5.9\% & 100\% & - \\
%    \bottomrule
        \toprule
   \multicolumn{5}{c}{Elementary Submodular} \\        
      \midrule 
     $n$   & rank 1 & rank 2 & rank 3 & rank 4  \\
     \midrule
     3 & 3.14\% & 53.16\% & 100\% & -  \\
     4 & $6 \cdot 10^{-4}$\% & 0.38\% & 29\% & 100\% \\
    \bottomrule
    \end{tabular}
\end{table}
\end{minipage}
\end{wrapfigure}

\subsection{Elementary Submodular Rank}
\label{sec:elemenetary-supermodular-rank}

We introduce a specialized notion of supermodular rank, which we call the elementary supermodular rank. 
This restricts to specific $\pi$. 
Later we focus on submodular functions, so we phrase the definition in terms of submodular functions. 

\begin{defn}
If $\pi$ has a unique 
coordinate $i$ with the sign of $\pi_i$ equal to $-1$, then we call $-\mathcal{L}_{\pi}$ an \emph{elementary submodular cone} and
we say that a function $f \in -\mathcal{L}_{\pi}$ 
is
$\{i\}$-submodular. 
\end{defn}

\begin{defn}
\label{def:rank}
The \emph{elementary submodular rank} of a function $f\colon \{0,1\}^n\to\mathbb{R}$ is the smallest $r+1$ such that 
$f = f_0 + f_{i_1} + \cdots + f_{i_{r}}$, where 
$f_0 \in -\mathcal{L}_{(1, \ldots, 1)}$
and $f_{i_j}$ is $\{i_j\}$-submodular. 
\end{defn}

\begin{restatable}[Maximum elementary submodular rank]{theorem}{theoremmaximumelementaryrank} 
\label{prop:maximum-elementary-rank}
For $n \ge 3$, the maximum elementary submodular rank 
of a function $f\colon \{0, 1\}^n \to \mathbb{R}$
is $n$. 
Moreover, a supermodular function in the interior of 
$\mathcal{L}_{(1,\ldots,1)}$ has 
elementary submodular rank $n$. 
\end{restatable}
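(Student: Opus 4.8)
The plan is to pass from the decomposition in Definition~\ref{def:rank} to a statement about Minkowski sums of cones, and then read off the answer from the facet description in Theorem~\ref{cor:facets-minkowski-sums}. Throughout, write $\tau^{(i)}\in\{\pm1\}^n$ for the sign vector with $-1$ in coordinate $i$ and $+1$ elsewhere, so that the $\{i\}$-submodular functions are exactly $-\mathcal{L}_{\tau^{(i)}}$ and the base cone is $-\mathcal{L}_{(1,\ldots,1)}$.

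First I would reduce the rank to a covering problem. Since each $-\mathcal{L}_{\tau^{(i)}}$ is a cone, a sum of several $\{i\}$-submodular summands for the same $i$ is again $\{i\}$-submodular; hence in any decomposition $f=f_0+f_{i_1}+\cdots+f_{i_r}$ I may merge repeated coordinates and assume $i_1,\ldots,i_r$ are distinct. Consequently $f$ has elementary submodular rank at most $r+1$ if and only if $f\in -\mathcal{L}_{(1,\ldots,1)}+\sum_{i\in S}\bigl(-\mathcal{L}_{\tau^{(i)}}\bigr)$ for some $S\subseteq[n]$ with $|S|\le r$. As removing more coordinates only enlarges the sum, the family of admissible $S$ is upward closed, and the rank equals $1+\min\{|S| : f \text{ lies in the sum indexed by } S\}$.

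Next I would compute these Minkowski sums using Lemma~\ref{lem:facets} and Theorem~\ref{cor:facets-minkowski-sums}. By Lemma~\ref{lem:facets}, the base cone $-\mathcal{L}_{(1,\ldots,1)}$ is cut out by $A^{(kl)}f\le 0$ for all pairs, while $-\mathcal{L}_{\tau^{(i)}}$ imposes $A^{(kl)}f\le 0$ for pairs with $i\notin\{k,l\}$ and the reversed inequality $A^{(kl)}f\ge 0$ for pairs with $i\in\{k,l\}$. Thus the inequality $A^{(kl)}f\le 0$ is shared by the base cone and by every $-\mathcal{L}_{\tau^{(i)}}$, $i\in S$, precisely when $\{k,l\}\cap S=\emptyset$, whereas the reversed inequalities are never shared with the base cone. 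By Theorem~\ref{cor:facets-minkowski-sums} the Minkowski sum is therefore the polyhedral cone $\{f : A^{(kl)}f\le 0 \text{ for all } \{k,l\}\subseteq [n]\setminus S\}$.

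Now both claims follow. For the upper bound, take $S=[n]\setminus\{p\}$, the complement of a single coordinate: then there is no pair inside $[n]\setminus S$, the list of common facet inequalities is empty, and (as in the full-dimensional identity $\mathcal{P}+(-\mathcal{P})=\mathbb{R}^d$ noted in the proof of Theorem~\ref{cor:facets-minkowski-sums}) the sum is all of $\mathbb{R}^X$. Hence every $f$ decomposes with $|S|=n-1$, so the rank is at most $n$. For the lower bound, the computation above shows that $f$ has rank at most $n-1$ exactly when it lies in a sum with $|S|=n-2$, equivalently when there is a single pair $\{p,q\}$ with $A^{(pq)}f\le 0$. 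If $f$ lies in the interior of $\mathcal{L}_{(1,\ldots,1)}$, then $A^{(kl)}f>0$ strictly for every pair, so no such $\{p,q\}$ exists and the rank is exactly $n$; in particular the maximum value $n$ is attained, establishing both statements. The main obstacle is getting the sign bookkeeping of Lemma~\ref{lem:facets} right in the previous paragraph so as to correctly identify the common block inequalities, and justifying that an empty common-facet list yields all of $\mathbb{R}^X$ rather than a proper cone; the merging reduction and the interior argument are then routine.
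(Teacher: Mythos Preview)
Your proposal is correct and follows essentially the same approach as the paper: both identify the Minkowski sum $-\mathcal{L}_{(1,\ldots,1)}+\sum_{i\in S}(-\mathcal{L}_{\tau^{(i)}})$ with the cone $\{f:A^{(kl)}f\le 0\text{ for }\{k,l\}\subseteq[n]\setminus S\}$ via Theorem~\ref{cor:facets-minkowski-sums}, take $|S|=n-1$ for the upper bound, and use the strict inequalities $A^{(kl)}f>0$ of an interior supermodular function for the lower bound. Your explicit merging of repeated indices and the upward-closure remark make the reduction to distinct $i_j$ a bit more transparent than in the paper's version.
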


The proof is given in Appendix~\ref{app:elementary-submodular-rank} using similar techniques to Theorem~\ref{thm:rank}.  
The relative volume of functions of different ranks is shown in Table~\ref{tab:volume}, with details in Appendix~\ref{app:computing-volume}. 

\subsection{Low Elementary Submodular Rank Approximations} 
Given $f\colon\{0,1\}^n \to \mathbb{R}$ and a target rank $r$, we seek an \emph{elementary submodular rank-$r$ approximation} of $f$. This is a function $g\colon \{0,1\}^n \to \mathbb{R}$ that minimizes $\|f-g\|_{\ell_2}$, with $g$
elementary submodular rank $r$. 
The set of elementary submodular rank $r$ functions is a union of convex cones, which is in general not convex.
However, 
for fixed 
$\pi^{(1)}, \ldots, \pi^{(r)}$, finding the closest point to $f$ in 
$\mathcal{L}_{\pi^{(1)}} + \cdots + \mathcal{L}_{\pi^{(r)}}$ is a convex 
problem. 
 To find 
  $g$, we compute the approximation for all rank $r$ convex cones and 
  pick the function with the least error. 
Computing the projection onto each cone may be challenging, 
due to the number of facet-defining inequalities. 
We use a new method \textsc{Project and Forget} \citep{JMLR:v23:20-1424}, detailed in Appendix~\ref{app:details-computing-decomposition}.

\section{Set Function Optimization} 
\label{sec:set-function-optimization}

The elementary submodular rank gives a gradation of functions. We apply it to set function optimization. The first application is to constrained set function maximization. 

\subsection{Maximization of Monotone Set Functions} 

\begin{wrapfigure}[8]{R}{0.42\textwidth}
\begin{minipage}{0.42\textwidth}
\vspace{-1cm}
\setstretch{.9}
\begin{algorithm}[H] 
\caption{\textsc{Greedy}}
\label{alg:greedy}
\small 
\begin{algorithmic}[1]
\Function{Greedy}{$f$, $\mathcal{M}$}
    \State $S_0 = \emptyset$, 
$F = \{ e \in V : S_0 \cup \{e\} \in \mathcal{M}\} $
    \While{$F \neq \emptyset$}
        \State 
        $e  = \argmax_{e \in F}\Delta(e|S_k)$ 
        \State 
        $S_{k+1} = S_k \cup \{e\}$
        \State $F = \{ e \in V : S_{k+1} \cup \{e\} \in \mathcal{M}\} $
    \EndWhile
    \State \Return $S_k$
\EndFunction
\end{algorithmic}
\end{algorithm}
\setstretch{1}
\end{minipage} 
\end{wrapfigure}
We first present definitions and previous results. 

\begin{defn}
    A set function $f\colon 2^V\to\mathbb{R}$ is 
   \begin{itemize}[nosep,leftmargin=*]
  \item \emph{monotone (increasing)} 
  if $f(A) \! \le \! f(B)$ for all $A \! \subseteq \! B$, 
  \item \emph{normalized} if $f(\emptyset) = 0$, and
  \item \emph{positive} if $f(A) > 0$ for all $A \in 2^V\setminus \{\emptyset\}$. 
  \end{itemize}
\end{defn}

Examples of monotone submodular functions include entropy $S\mapsto H(X_S)$ and mutual information $S\mapsto I(Y;X_S)$. 
Unconstrained maximization gives an optimum at $S=V$, but when constrained to subsets with upper bounded cardinality the problem is NP-hard. The cardinality constraint is a type of matroid constraint.

\begin{defn} 
A system of sets $\mathcal{M} \subseteq 2^V$ is a \emph{matroid}, if (i) $S \in \mathcal{M}$ and $T \subset S$ implies $T \in X$ and (ii) $S,T \in \mathcal{M}$ and $|T|+1 = |S|$ implies $\exists e \in S \setminus T$ such that $T \cup \{e\} \in \mathcal{M}$. The \emph{matroid rank} is the cardinality of the largest set in $\mathcal{M}$. 
\end{defn}

\begin{problem} 
Let $f$ be a monotone increasing normalized set function and let $\mathcal{M} \subseteq 2^V$ be a matroid. The $\mathcal{M}$-matroid constrained maximization problem is $\max_{S \in \mathcal{M}} f(S)$. 
The cardinality constraint problem is the special case $\mathcal{M}=\{S\subseteq V\colon |S|\leq m\}$, for a given $m \leq |V|$. 
\end{problem} 

A natural approach to finding an approximate solution 
to cardinality constrained monotone set function optimization
is by 
an iteration that mimics gradient ascent. 
Given $S\subseteq V$ and $e\in V$,  the discrete derivative of $f$ at $S$ in direction $e$ is
\[
    \Delta(e | S) = f(S \cup \{e\}) - f(S). 
\]
The \textsc{Greedy} algorithm~(\citet{Nemhauser1978AnAO}) produces a sequence of sets starting with $S_0 = \emptyset$, 
adding at each iteration an element $e$ that maximizes the discrete derivative subject to $S_i\cup\{e\}\in\mathcal{M}$ until no further additions are possible, see Algorithm~\ref{alg:greedy}. 

\paragraph{Submodular functions.}
A well-known result by \citet{Nemhauser1978AnAO} shows that \textsc{Greedy} has an approximation ratio (i.e., returned value divided by optimum value) of at least $(1 - e^{-1})$ for monotone submodular 
maximization with cardinality constraints. \citet{Clinescu2011MaximizingAM, filmus2012tight} show 
a polynomial time algorithm exists for the matroid constraint 
case with the same approximation ratio of $(1-e^{-1})$. 
These guarantees can be refined by measuring how far a submodular function is from being modular.  

\begin{defn}[\citet{CONFORTI1984251}]  \label{def:total-curvature}
The \emph{total curvature} of a normalized, monotone increasing submodular set function $f$ is 
\[
    \hat{\alpha} := \max_{e \in \Omega}\frac{\Delta(e|\emptyset) - \Delta(e|V\setminus \{e\})}{\Delta(e|\emptyset)}, \quad \text{where $\Omega = \{ e \in V \colon \Delta(e|\emptyset) > 0\}$. 
}
\]
\end{defn} 
It can be shown that $\hat{\alpha} = 0$ if and only if $f$ is modular, and that $\hat{\alpha} \leq 1$ for any submodular~$f$. 
Building on this, \citet{sviridenko2017optimal} presents a \textsc{Non-Oblivious Local Search Greedy} algorithm. 
They proved for any $\epsilon$ an approximation ratio of $(1 - \hat{\alpha}e^{-1} + O(\epsilon))$ for the matroid constraint 
and that
this approximation ratio is optimal. See Appendix~\ref{app:curvature-and-other-notions}.

\paragraph{Approximately submodular functions.} 

To optimize functions that are not submodular, many prior works have looked at approximately submodular functions. 
We briefly discuss these here. 

\begin{defn}
\label{def:generalizedcurvaturesubmratio}
Fix a non-negative monotone set function $f : 2^V \to \mathbb{R}$.  
\begin{itemize}[nosep,leftmargin=*]
\item 
The \emph{submodularity ratio}~(\citet{bian2017guarantees,Wang2019MinimizingRO}) 
with respect to a set $X$ and integer $m$ 
is 
\[
    \gamma_{X,m} := \min_{T \subset X, S \subset V, |S| \le m, S \cap T = \emptyset} \frac{\sum_{e \in S}\Delta(e|T)}{\Delta(S|T)}.
\]
We drop the subscripts when $X = V$ and $m = |V|$.

\item The \emph{generalized curvature}~(\citet{bian2017guarantees}) 
is the smallest $\alpha$ s.t.\ for all $T, S \in 2^V$ and $e \in S \setminus T$,  
\[
    \Delta(e | (S \setminus \{e\}) \cup T) \ge (1-\alpha)\Delta(e | S \setminus \{e\}) . 
\]
\end{itemize}
\end{defn}

\begin{rem}
\label{rem:properties-alpha-gamma}
For monotone increasing $f$, we have $\gamma \in [0,1]$, with $\gamma = 1$ if and only if $f$ is submodular 
as well as 
$\alpha \in [0,1]$ and
$\alpha = 0$ 
iff $f$ is supermodular. 
Thus, if $\alpha = 0$ and $\gamma = 1$, then $f$ is 
modular. 
We compare these notions to the elementary submodular rank in Appendix~\ref{app:comparison}. 
\end{rem} 

 \citet{bian2017guarantees} obtained a guarantee of $\frac{1}{\alpha}(1-e^{-\alpha \gamma})$ for the \textsc{Greedy} algorithm on the cardinality constraint problem. 
For optimization of non-negative monotone increasing $f$ with general matroid constraints, \cite{10.5555/2634074.2634180, chen2018weakly, gatmiry2018non} provide approximation guarantees that depend on $\gamma$ and $\alpha$.

\paragraph{Functions with bounded elementary rank.} 

\begin{wrapfigure}[8]{R}{.4\textwidth} 
\begin{minipage}{0.4\textwidth}
\vspace{-.8cm}
\setstretch{.9}
\begin{algorithm}[H]
\caption{\textsc{r-Split}}
\small 
\label{alg:modified}
\begin{algorithmic}[1]
\Function{r-Split}{$f$, $r$, $\mathcal{A}$ - subroutine}
    \For{$A \subseteq B \subseteq V$ with $|B| = r$}
        \State run $\mathcal{A}$ on $f_{A,B}$ 
    \EndFor
    \State run $\mathcal{A}$ on $f$\;
    \State \Return Best seen set 
\EndFunction
\end{algorithmic}
\end{algorithm}
\setstretch{1}
\end{minipage}
\end{wrapfigure}
We formulate an algorithm with guarantees for the optimization of set functions with bounded elementary submodular rank.  
We first give definitions and properties of low elementary submodular rank functions. 

\begin{restatable}{defn}{defpiab}
    \label{def:PiAB} 
Given $A \subseteq B \subseteq V$, define $\Pi(A,B) := \{ C \subseteq V : C \cap B = A\}$. Given a set function $f : 2^V \to \mathbb{R}$, we let $f_{A,B}$ denote its restriction to $\Pi(A,B)$.
\end{restatable} 
Note that $\Pi(A,B)\cong 2^{V\setminus B}$.
If $B = \{i\}$, then $f_{\{i\},B}$ and $f_{\emptyset,B}$ are the pieces of $f$ 
on the sets that contain (resp. do not contain) $i$. 
If $|B| = m$, then we have $2^m$ pieces, the choices of $A \subseteq B$. 

\begin{restatable}{prop}{propsubmodularrestriction} 
\label{prop:submodular-restriction}
A set function $f$ has elementary submodular rank $r+1$, with decomposition $f = f_0 + f_{i_1} + \cdots + f_{i_{r}}$, if and only if 
$f_{A,B}$ is submodular for $B = \{i_1, \ldots, i_{r}\}$ and any $A \subseteq B$. 
\end{restatable}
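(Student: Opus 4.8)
The plan is to rewrite both sides of the claimed equivalence as the same family of elementary imset inequalities on $f$ and then match them. Write $B = \{i_1,\dots,i_r\}$ and let $\pi^{(i_j)}$ denote the order that flips only coordinate $i_j$, so that $f_{i_j}$ being $\{i_j\}$-submodular means $f_{i_j}\in -\mathcal{L}_{\pi^{(i_j)}}$. Admitting a decomposition $f = f_0 + f_{i_1} + \cdots + f_{i_r}$ of the prescribed type is exactly the statement that $f$ lies in the Minkowski sum of submodular cones $-\mathcal{L}_{(1,\dots,1)} + (-\mathcal{L}_{\pi^{(i_1)}}) + \cdots + (-\mathcal{L}_{\pi^{(i_r)}})$. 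By Theorem~\ref{cor:facets-minkowski-sums} this sum is the polyhedral cone cut out by the facet inequalities common to all $r+1$ summands, so the task reduces to determining which blocks $A^{(kl)}$ are common. Using Lemma~\ref{lem:facets} I would track the sign $\tau_k\tau_l$ of the block $A^{(kl)}$ in each summand: in $-\mathcal{L}_{(1,\dots,1)}$ every block reads $A^{(kl)}f \le 0$, while in $-\mathcal{L}_{\pi^{(i_j)}}$ the block keeps the direction $A^{(kl)}f\le 0$ when $i_j\notin\{k,l\}$ and reverses to $A^{(kl)}f\ge 0$ when $i_j\in\{k,l\}$. Hence a block survives with a consistent direction across all summands precisely when neither $k$ nor $l$ lies in $B$, and membership in the Minkowski sum is equivalent to the condition
\[
A^{(kl)}f \le 0 \quad\text{for all }k,l\in V\setminus B\text{ with }k\ne l. \qquad(\ast)
\]

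Next I would treat the restriction side. Writing each $C\in\Pi(A,B)$ as $C = A\cup D$ with $D\subseteq V\setminus B$ realizes the identification $\Pi(A,B)\cong 2^{V\setminus B}$, under which $f_{A,B}(D) = f(A\cup D)$. The standard elementary imset inequalities for $f_{A,B}$ are then $f(A\cup w\cup\{k\}) + f(A\cup w\cup\{l\}) \le f(A\cup w) + f(A\cup w\cup\{k,l\})$ for $k\ne l$ in $V\setminus B$ and $w\subseteq (V\setminus B)\setminus\{k,l\}$, which are exactly the rows of $A^{(kl)}f\le 0$ indexed by $z = A\cup w$. The combinatorial observation that closes this step is that, for a fixed pair $k,l\in V\setminus B$, as $A$ runs over all subsets of $B$ and $w$ over all subsets of $(V\setminus B)\setminus\{k,l\}$, the set $z = A\cup w$ runs bijectively over all subsets of $V\setminus\{k,l\}$ (one recovers $A = z\cap B$ and $w = z\setminus B$). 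Thus ``$f_{A,B}$ submodular for every $A\subseteq B$'' is also equivalent to $(\ast)$, and comparing the two reformulations proves the proposition.

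I expect the main obstacle to be the sign bookkeeping in the first step: one must read off from Lemma~\ref{lem:facets} exactly which blocks $A^{(kl)}$ remain common facets, and invoke Theorem~\ref{cor:facets-minkowski-sums} to guarantee that $(\ast)$ is the genuine facet description of the Minkowski sum rather than merely a valid relaxation. The restriction-to-imset translation and the indexing bijection in the second step are routine once the identification $\Pi(A,B)\cong 2^{V\setminus B}$ is fixed.
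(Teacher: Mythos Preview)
Your proposal is correct and takes essentially the same approach as the paper: both arguments identify the Minkowski sum $-\mathcal{L}_{(1,\dots,1)} + \sum_j (-\mathcal{L}_{\pi^{(i_j)}})$ with the cone $\mathcal{L}_\xi$ cut out by $A^{(kl)}f\le 0$ for $k,l\notin B$, and then observe this is exactly the set of imset inequalities asserting submodularity of every restriction $f_{A,B}$. The paper is terser, handling the forward direction via Proposition~\ref{prop:rank1} rather than your symmetric $(\ast)$-matching, but the content is the same. One minor slip to fix: the displayed inequality you wrote for submodularity of $f_{A,B}$ has the wrong direction---as written it is the supermodular inequality $A^{(kl)}f\ge 0$; reverse the $\le$ to $\ge$ so that it matches the $A^{(kl)}f\le 0$ you correctly claim next.
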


The pieces $f_{A,B}$ behave well in terms of their submodularity ratio and curvature.

\begin{restatable}{prop}{proprestrictedalphagamma} 
\label{prop:restricted-alpha-gamma}
If $f$ is a set function with submodularity ratio $\gamma$ and generalized curvature $\alpha$, then $f_{A,B}$ has submodularity ratio 
$\gamma_{A,B} \ge \gamma$ and generalized curvature $\alpha_{A,B} \le \alpha$. 
\end{restatable}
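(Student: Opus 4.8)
The plan is to exploit the fact that $f_{A,B}$, viewed as a set function on the ground set $V\setminus B$, is a shifted copy of $f$, so that its discrete derivatives coincide with discrete derivatives of $f$ at shifted arguments. Concretely, under the identification $\Pi(A,B)\cong 2^{V\setminus B}$ via $D\mapsto A\cup D$ (recall $A\subseteq B$, so every $C\in\Pi(A,B)$ is uniquely $A\cup D$ with $D\subseteq V\setminus B$), we have $f_{A,B}(D)=f(A\cup D)$. Hence, writing $\Delta_{A,B}$ for the discrete derivative of $f_{A,B}$, for all $D,S\subseteq V\setminus B$ and $e\in V\setminus B$,
\[
\Delta_{A,B}(e\mid D)=f(A\cup D\cup\{e\})-f(A\cup D)=\Delta(e\mid A\cup D),\qquad \Delta_{A,B}(S\mid D)=\Delta(S\mid A\cup D).
\]
This identity is the engine for both bounds.

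For the submodularity ratio, I would show that every pair feasible in the minimization defining $\gamma_{A,B}$ corresponds to a feasible pair for $\gamma$ with the same ratio value. Given $T,S\subseteq V\setminus B$ with $S\cap T=\emptyset$, set $T'=A\cup T$. The identity above gives
\[
\frac{\sum_{e\in S}\Delta_{A,B}(e\mid T)}{\Delta_{A,B}(S\mid T)}=\frac{\sum_{e\in S}\Delta(e\mid T')}{\Delta(S\mid T')}.
\]
It remains to check $(T',S)$ is feasible for $\gamma$: we have $T',S\subseteq V$, and $S\cap T'=(S\cap A)\cup(S\cap T)=\emptyset$, since $S\subseteq V\setminus B$ is disjoint from $A\subseteq B$ and from $T$ by hypothesis; the cardinality bound is vacuous as $m=|V|$ for $\gamma$. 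Thus the minimum defining $\gamma_{A,B}$ is taken over a subfamily of the pairs defining $\gamma$, and a minimum over a smaller family is at least a minimum over a larger one, giving $\gamma_{A,B}\ge\gamma$.

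For the generalized curvature, the argument is dual: each inequality constraining $\alpha_{A,B}$ is a specialization of one constraining $\alpha$. Fix $S,T\subseteq V\setminus B$ and $e\in S\setminus T$, and set $S'=A\cup S$, $T'=T$. Since $e\notin A$, we have $S'\setminus\{e\}=A\cup(S\setminus\{e\})$, so the identity yields $\Delta_{A,B}(e\mid(S\setminus\{e\})\cup T)=\Delta(e\mid(S'\setminus\{e\})\cup T')$ and $\Delta_{A,B}(e\mid S\setminus\{e\})=\Delta(e\mid S'\setminus\{e\})$. Moreover $e\in S'\setminus T'$. Hence the curvature inequality for $f_{A,B}$ at $(S,T,e)$ is exactly the curvature inequality for $f$ at $(S',T',e)$. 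Because $\alpha$ is a valid curvature constant for $f$, it satisfies all such inequalities, in particular this subfamily; since increasing the constant only weakens the inequality, the smallest valid constant for $f_{A,B}$ satisfies $\alpha_{A,B}\le\alpha$.

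The individual steps are short; the main obstacle is the careful bookkeeping of the set-theoretic constraints under the shift by $A$ — verifying that disjointness ($S\cap T'=\emptyset$) and the membership condition ($e\in S'\setminus T'$) are inherited, so that the extremal problem for $f_{A,B}$ genuinely ranges over a subfamily of the configurations for $f$ (and handling the degenerate $0/0$ cases where a derivative vanishes). Once the identity $\Delta_{A,B}(e\mid D)=\Delta(e\mid A\cup D)$ is in place, both bounds reduce to the fact that restricting an extremal problem to a subfamily can only raise a minimum and can only preserve the validity of an inequality constraint.
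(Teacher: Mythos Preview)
Your argument is correct and is exactly the approach of the paper, just written out in full detail: the paper's proof is a one-line remark that restricting to $f_{A,B}$ means the defining min/max range over fewer configurations $(S,T)$, so $\gamma$ can only go up and $\alpha$ can only go down. Your careful verification of the shift identity $\Delta_{A,B}(e\mid D)=\Delta(e\mid A\cup D)$ and of the feasibility of the shifted pairs is precisely the bookkeeping that makes that sentence rigorous.
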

We propose the algorithm \textsc{r-Split}, see Algorithm~\ref{alg:modified}. The idea is to run \textsc{Greedy}, or another optimization method, on the pieces $f_{A,B}$ separately and then choose the best subset. 

\begin{defn} \label{def:alpha-gamma-min-max} 
We define 
\[
    \alpha_r := \min_{B \subseteq V, |B| \le r} \left( \max_{A \subseteq B} \alpha_{A,B} \right)
\quad \text{and} \quad 
    \gamma_r := \max_{B \subseteq V, |B| \le r} \left( \min_{A \subseteq B} \gamma_{A,B} \right). 
\]
\end{defn}

\begin{restatable}[Guarantees for \textsc{r-Split}]{theorem}{theoremrsplit}
\label{thm:main-result}
Let $\mathcal{A}$ be an algorithm for matroid constrained maximization of set functions, such that for any monotone,
non-negative function $g\colon 2^W \to \mathbb{R}$, $|W|=m$, with generalized curvature $\alpha$ and submodularity ratio $\gamma$, algorithm
$\mathcal{A}$ makes $O(q(m))$ queries to the value of $g$, where $q$ is a polynomial, and returns a solution with approximation ratio $R(\alpha, \gamma)$. 
If $f \colon 2^{[n]} \to \mathbb{R}$ is a monotone, 
non-negative function with generalized curvature $\alpha$, submodularity ratio $\gamma$, and elementary submodular rank $r+1$, 
then \textsc{r-Split} 
with subroutine $\mathcal{A}$ runs in time $O(2^r n^rq(n))$ and returns a solution with approximation ratio $\max\{R(\alpha, \gamma), R(\alpha_r, 1)\}$. 
\end{restatable}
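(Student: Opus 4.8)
The plan is to separate the two assertions---runtime and approximation ratio---and to reduce each subproblem $f_{A,B}$ to an instance to which the hypothesis on $\mathcal{A}$ applies. Fix an optimum $S^\star\in\argmax_{S\in\mathcal{M}}f(S)$ and write $\mathrm{OPT}=f(S^\star)$. First I would record the dictionary between ``restricting the search to $\Pi(A,B)$'' and ``matroid-constrained maximization on the ground set $V\setminus B$''. Under the identification $\Pi(A,B)\cong 2^{V\setminus B}$ given by $D\mapsto A\cup D$, the feasible family $\{C\in\mathcal{M}:C\cap B=A\}$ becomes $\{D\subseteq V\setminus B:A\cup D\in\mathcal{M}\}$, which is the matroid obtained from $\mathcal{M}$ by deleting $B\setminus A$ and contracting $A$ (downward closure is immediate; the exchange axiom is the standard contraction fact). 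The map $D\mapsto f(A\cup D)$ is monotone and non-negative, and I would normalize it by subtracting $f(A)$: this leaves every discrete derivative $\Delta(e\mid S)$ unchanged, hence preserves the generalized curvature and the submodularity ratio, and makes the function admissible for $\mathcal{A}$. A one-line computation then shows that if the subproblem optimum equals $\mathrm{OPT}$ and $\mathcal{A}$ returns a point of value at least $R\cdot(\text{subproblem optimum})$, the corresponding set $A\cup D$ has $f$-value at least $R\cdot\mathrm{OPT}+(1-R)f(A)\ge R\cdot\mathrm{OPT}$.

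The runtime is then a counting statement: there are $\binom{n}{r}$ sets $B$ with $|B|=r$ and $2^r$ subsets $A\subseteq B$, plus the single call on $f$; each call runs $\mathcal{A}$ on a ground set of size at most $n$, costing $O(q(n))$ queries, and each query to $f_{A,B}$ is one query to $f$. This yields the stated $O(2^r n^r q(n))$.

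For the approximation ratio I would prove two lower bounds and take their maximum. Running $\mathcal{A}$ on $f$ itself gives a set of value at least $R(\alpha,\gamma)\,\mathrm{OPT}$. For the second bound, Definition~\ref{def:rank} together with Proposition~\ref{prop:submodular-restriction} supplies a set $B_0$ with $|B_0|=r$ for which $f_{A,B_0}$ is submodular---so $\gamma_{A,B_0}=1$---for every $A\subseteq B_0$. Taking $A_0=S^\star\cap B_0$, the set $S^\star$ is feasible for the $(A_0,B_0)$ subproblem and realizes value $\mathrm{OPT}$, while no feasible set of that subproblem can exceed $\mathrm{OPT}$; hence its optimum is exactly $\mathrm{OPT}$. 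Since $f_{A_0,B_0}$ is submodular, the call of $\mathcal{A}$ on it returns, through the dictionary above, a set of $f$-value at least $R(\alpha_{A_0,B_0},1)\,\mathrm{OPT}$, and \textsc{r-Split} reports something at least as good.

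The remaining, and I expect hardest, step is to replace $R(\alpha_{A_0,B_0},1)$ by the stated $R(\alpha_r,1)$. Here the definition $\alpha_r=\min_{|B|\le r}\max_{A\subseteq B}\alpha_{A,B}$ is a $\min$ over $B$ of a $\max$ over $A$, so I must argue that the curvature-controlling choice of $B$ can be taken to be submodular-making, so that the submodularity ratio $1$ and the curvature bound hold at one and the same index. I would do this using the monotonicity of $R$ (non-increasing in curvature, non-decreasing in submodularity ratio, as holds for all the ratios discussed in Section~\ref{sec:set-function-optimization}), Proposition~\ref{prop:restricted-alpha-gamma} (giving $\alpha_{A,B}\le\alpha$ and $\gamma_{A,B}\ge\gamma$), and Definition~\ref{def:alpha-gamma-min-max}, together with the fact that \textsc{r-Split} enumerates every $A\subseteq B$ and every $B$ of size $r$, so that both the curvature-optimal index $B$ and the capturing index $A=S^\star\cap B$ are actually tried. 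Combining the two lower bounds then gives the approximation ratio $\max\{R(\alpha,\gamma),R(\alpha_r,1)\}$; reconciling the nested $\min$--$\max$ defining $\alpha_r$ with the requirement that a single submodular-making index simultaneously delivers $\gamma=1$ and the curvature bound is the delicate point the argument must handle with care.
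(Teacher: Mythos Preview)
Your approach mirrors the paper's proof closely, and in several places is more careful than the paper itself. The paper's proof does not discuss the matroid structure of the restricted subproblems (your contraction/deletion argument) nor the normalization by subtracting $f(A)$; it simply treats each $f_{A,B}$ as an admissible input for $\mathcal{A}$ and counts $\binom{n}{r}2^r$ subproblems at cost $O(q(n))$ each. Your use of Proposition~\ref{prop:submodular-restriction} to produce a set $B_f$ with $\gamma_{A,B_f}=1$ for every $A\subseteq B_f$, and the observation that $S^\star$ sits in the piece $A_0=S^\star\cap B_f$, match the paper's argument exactly.

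The issue you flag about passing from $R(\alpha_{A_0,B_0},1)$ to $R(\alpha_r,1)$ is genuine, and the paper does not resolve it either: after noting that $\min_{A\subseteq B_f}\gamma_{A,B_f}=1$, the paper simply writes ``that is, $R(\alpha_r,1)$'' without explaining why the curvature-minimizing $B$ in Definition~\ref{def:alpha-gamma-min-max} may be taken to be the submodular-making $B_f$. Your analysis is correct that nothing in the definitions forces the $\min_{|B|\le r}$ in $\alpha_r$ to be attained at a $B$ for which all pieces are submodular; what both arguments actually establish is the ratio $R(\alpha_{S^\star\cap B_f,\,B_f},\,1)$ (or, more conservatively, $R(\max_{A\subseteq B_f}\alpha_{A,B_f},\,1)$), which is not obviously comparable to $R(\alpha_r,1)$. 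So you have not missed a trick here---the paper leaves this step as an unjustified assertion, and your proposal is at least as complete as the published proof.
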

\begin{proof}[Proof idea] 
\vspace{-2ex}
A function of elementary rank $r+1$ can be split into $2^r$ submodular pieces, 
by Proposition~\ref{prop:submodular-restriction}. 
The \textsc{r-Split} algorithm looks at all possible ways of splitting. Since one of the splittings has all pieces submodular, we get the stated approximation guarantee.
Details are in Appendix~\ref{app:details-set-function-optimization}. 
\end{proof}

\begin{rem}[Approximation ratio] The 
approximation ratio $R(\alpha, \gamma)$ usually improves as $\gamma$ increases, as for the function $\frac{1}{\alpha}(1-e^{-\alpha \gamma})$ from \cite{bian2017guarantees}. The case $\gamma = 1$ corresponds to the function being submodular. If we split an elementary rank-$(r+1)$ function into the appropriate $2^r$ pieces, then we run the subroutine $\mathcal{A}$ on submodular functions and hence obtain best available guarantees. 
\end{rem} 

\begin{rem}[Time complexity] 
For fixed $r$ we give a polynomial time approximation algorithm 
for elementary submodular rank-$(r+1)$ functions. 
If we assume knowledge of the 
cones involved in a decomposition of $f$, then we need only optimize over one split, and we can drop the time complexity factor $n^r$. 
With the extra information about the decomposition, this 
is a fixed parameter tractable (FPT) time $O(1)$ approximation for monotone function maximization,
parameterized by the elementary submodular rank. 
This suggests that determining the cones in the decomposition may be a difficult problem. 
\end{rem} 

In Table~\ref{tab:max-compare} we instantiate several corollaries of Theorem~\ref{thm:main-result} for specific choices of the subroutine and compare them with the prior work mentioned above (see Appendix~\ref{app:details-set-function-optimization} for details).

\begin{table}[h!]
    \caption{Approximation ratios and time complexity for maximizing a monotone, normalized function $f$ on $2^{[n]}$ of total curvature $\hat{\alpha}$, generalized curvature $\alpha$, submodularity ratio $\gamma$, and elementary submodular rank $r+1$. 
    Here $m$ is the 
    bound in the cardinality constraint, 
    and $\rho$ is the matroid rank in the Matroid constraint. 
    Results with $^*$ are in expectation over randomization in the algorithm. 
    }
    \label{tab:max-compare}
    \centering
\small 
\renewcommand{\arraystretch}{.8}
    \begin{tabular}{p{.02in}C{.42in}C{.28in}C{.55in}C{.5in}C{.9in}C{1.1in}C{.4in}}
    \toprule 
      & Sub -modular & Low Rank & Card.\ Constr.\ & Matroid Constr.\ & Approximation\ Ratio  & Time & Ref.\\
      \midrule
        \multirow{7}{*}{\rotatebox{90}{\!\!\!\!\!\!Prior Work}}
        & \checkmark & - & - & \checkmark & $1-e^{-1}$ $^*$ & $\tilde{O}(n^8)$ & 
        \cite{Clinescu2011MaximizingAM}\\ 
        & \checkmark & - & \checkmark & - & $1-\hat{\alpha}e^{-1} - O(\epsilon)$ & $O(\epsilon^{-1} poly(n))$ & 
        \cite{sviridenko2017optimal} \\
        & - & - & \checkmark & - & $\frac{1}{\alpha}(1-e^{-\alpha \gamma})$ & $O(nm)$ & 
        \cite{bian2017guarantees}\\
        & - & - & - & \checkmark & $(1+\gamma^{-1})^{-2}$ & $O(n^2)$ &  
        \cite{chen2018weakly}\\
        & - & - & - & \checkmark & $\frac{0.4 \gamma^2}{\sqrt{\rho\gamma} + 1}$ & $O(n^2)$ &  
        \cite{gatmiry2018non}\\
        & - & - & - & \checkmark & $(1+(1-\alpha)^{-1})^{-1}$ & $O(n^2)$ & 
        \cite{gatmiry2018non}\\ \midrule 
        \multirow{3}{*}{\rotatebox{90}{This Work\!\!}} & - & \checkmark & \checkmark & - & $1-\hat{\alpha}e^{-1}- O(\epsilon)$ & $O(\epsilon^{-1} 2^rn^r \cdot poly(n))$  & Cor~\ref{cor:non-oblivious}\\[.1cm]
        & - & \checkmark & \checkmark & - & $\frac{1}{\alpha_r}(1-e^{-\alpha_r})$ & $O(2^r n^r \cdot nm)$ & Cor~\ref{cor:cardinality} \\[.1cm]
        & - & \checkmark & - & \checkmark & $1-e^{-1}$ $^*$  & $\tilde{O}(2^rn^r \cdot n^8)$ & Cor~\ref{cor:matroid}  \\
        \bottomrule 
    \end{tabular}
\end{table}

\subsection{Minimization of Ratios of Set Functions} 

\begin{wrapfigure}[9]{R}{0.4\textwidth}
\begin{minipage}{0.4\textwidth}
\vspace{-.9cm}
\setstretch{.9}
\begin{algorithm}[H]
\caption{\textsc{Ratio Greedy}}
\label{alg:ratiogreedy}
\small 
\begin{algorithmic}[1]
\Function{Ratio Greedy}{$f$,$g$}
    \State Initialize: $S_0 = \emptyset$, $R = V$ 
    \While{$R \neq \emptyset$}
        \State $u = \argmin_{v \in R} \frac{f(\{v\}\cup S_i)}{g(\{v\} \cup S_i)}$
        \State $S_{i+1} = S_i \cup \{u\}$
        \State $R = \{ v \in R : g(\{v\} \cup S_{i+1}) > 0\}$
    \EndWhile
    \State \Return $S_i$
\EndFunction
\end{algorithmic}
\end{algorithm}
\setstretch{1}
\end{minipage} 
\end{wrapfigure}
The second application of our decomposition is to minimize ratios of set functions. 
We give definitions and previous results, with details in Appendix~\ref{app:details-set-function-optimization}. 
\begin{problem}
\label{prob:ratio}
Given set functions $f$ and $g$, we seek
\[
    \min_{S} \frac{f(S)}{g(S)}. %\hspace{6cm}
\]
\end{problem}
\citet{pmlr-v48-baib16} obtain guarantees for 
\textsc{Ratio Greedy}, see Algorithm~\ref{alg:ratiogreedy}, 
for different submodular or modular combinations of $f$ and $g$. 
To quantify the approximation ratio 
for non-sub-/modular $f,g$, we need a few definitions.   

\begin{defn}[\citet{pmlr-v84-bogunovic18a}] 
\label{def:inverse-curvature}
\begin{itemize}[nosep,leftmargin=*]
\item 
The \emph{generalized inverse curvature} of a non-negative set function $f$ is the smallest $\tilde{\alpha}^f$ such that for all $T, S \in 2^V$ and for all $e \in S \setminus T$, 
\[
    \Delta(e | S \setminus \{e\}) \ge (1-\tilde{\alpha}^f) \Delta(e | (S \setminus \{e\}) \cup T). 
\]
\item 
The \emph{curvature} 
of $f$ with respect to $X$ is 
    $\hat{c}^f(X) := 1 - \frac{\sum_{e \in X} (f(X) - f(X\setminus \{e\})}{\sum_{e\in X} f(\{e\})}$. 
\end{itemize}
\end{defn}
It is known that $f$ is submodular if and only if $\tilde{\alpha}^f = 0$. 
Using these notions, \citet{10.5555/3172077.3172251} obtain an approximation guarantee for minimization of $f/g$ with monotone submodular $f$ and monotone~$g$, 
and \citet{Wang2019MinimizingRO} 
obtain a guarantee when both $f$ and $g$ are monotone. 

\paragraph{Functions with bounded elementary rank.} 

\begin{wrapfigure}[9]{R}{.43\textwidth} 
\begin{minipage}{0.43\textwidth}
\vspace{-0.3cm}
\setstretch{.9}
\begin{algorithm}[H]
\caption{\textsc{r-Split Ratio}}
\label{alg:modified-ratio-1}
\small 
\begin{algorithmic}[1]
\Function{r-Split Ratio}{$f,g$, $r$, $\mathcal{A}$ \hfill- \textsc{Greedy Ratio}}
    \For{$A \subseteq B \subseteq V$ with $|B| = r$}
        \State run $\mathcal{A}$ on $f_{A,B}$ and $g_{A,B}$
    \EndFor
    \State run $\mathcal{A}$ on $f,g$\;
    \State \Return Best seen set  
\EndFunction
\end{algorithmic}
\end{algorithm}
\setstretch{1}
\end{minipage}
\end{wrapfigure}
We formulate results for \textsc{r-Split} with \textsc{Greedy Ratio} subroutine, shown in Algorithm~\ref{alg:modified-ratio-1}. 
We improve on previous guarantees if the functions have low elementary submodular rank. 
First, we split only $f$.

\begin{restatable}[Guarantees for \textsc{r-Split Greedy Ratio} I]{theorem}{rsplitgreedyratio} 
\label{thm:main-result2a}
For the minimization of $f/g$ where $f, g$ are normalized positive monotone functions, assume $f$ has elementary submodular rank $r+1$. Let $X^*$ be the optimal solution.
Then \textsc{r-Split} with \textsc{Greedy Ratio} subroutine at a time complexity of $O(2^r n^r \cdot n^2)$,
has approximation ratio 
\[
    \frac{1}{\gamma_{\emptyset,|X^*|}^g}\frac{|X^*|}{1+(|X^*|-1)(1-\hat{c}^f(X^*))}.
\] 
If, in addition, $g$ is submodular then
the approximation ratio is $1/(1-e^{\hat{\alpha}^f-1})$. 
\end{restatable}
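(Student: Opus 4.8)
The plan is to reduce the analysis to a single favorable split and then invoke the known submodular \textsc{Ratio Greedy} guarantee on that split. Since $f$ has elementary submodular rank $r+1$, Proposition~\ref{prop:submodular-restriction} supplies a set $B^\star\subseteq V$ with $|B^\star|=r$ such that every restriction $f_{A,B^\star}$ (for $A\subseteq B^\star$) is submodular. Because \textsc{r-Split} enumerates all pairs $A\subseteq B$ with $|B|=r$ and returns the best set seen, it suffices to show that the iteration using this particular $B^\star$ already attains the claimed ratio; the returned value can only be smaller. First I would set $A^\star:=X^\star\cap B^\star$ and observe that the optimum of the restricted problem $f_{A^\star,B^\star}/g_{A^\star,B^\star}$ over $\Pi(A^\star,B^\star)\cong 2^{V\setminus B^\star}$ equals the global optimum $f(X^\star)/g(X^\star)$: the set $X^\star\setminus B^\star$ is feasible in the restriction and realizes that value, while no feasible set can beat the global minimum. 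Hence any set returned by \textsc{Ratio Greedy} on this piece is an actual candidate $A^\star\cup C'$ for the full problem, whose ratio to the global optimum is governed by the restricted approximation guarantee.

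Next I would apply the established guarantee for \textsc{Ratio Greedy} in the case of a submodular numerator and a monotone positive denominator (as in \citet{10.5555/3172077.3172251,Wang2019MinimizingRO}) to the pair $(f_{A^\star,B^\star},g_{A^\star,B^\star})$. This yields an approximation ratio of the form $\frac{1}{\gamma^{g_{A^\star,B^\star}}_{\emptyset,\,m'}}\cdot\frac{m'}{1+(m'-1)(1-\hat{c}^{f_{A^\star,B^\star}}(X^\star\setminus B^\star))}$, where $m'=|X^\star\setminus B^\star|\le |X^\star|$, expressed in the curvature and submodularity-ratio data of the restricted functions. Checking that the restrictions remain positive and monotone, and verifying the mild normalization hypotheses of the cited guarantee (since $f_{A,B}(\emptyset)=f(A)$ need not vanish), is routine.

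The crux is to replace the restricted curvature and submodularity-ratio data by the original ones. Here I would establish the analog of Proposition~\ref{prop:restricted-alpha-gamma} for the quantities appearing in this bound: that the denominator's submodularity ratio does not decrease, $\gamma^{g}_{A,B}\ge\gamma^g$, and that the numerator's curvature does not increase, $\hat{c}^{f_{A,B}}(X\setminus B)\le\hat{c}^f(X)$, both by comparing marginals $\Delta(e\mid S)$ on $\Pi(A,B)$ with marginals on $2^V$. Combined with $m'\le|X^\star|$ and the monotonicity of $h(m,c)=\frac{m}{1+(m-1)(1-c)}=\frac{m}{c+(1-c)m}$ — which satisfies $\partial_m h=c/D^2\ge 0$ and $\partial_c h=m(m-1)/D^2\ge 0$ for $m\ge 1$, $c\in[0,1]$, hence is nondecreasing in both arguments — and with $1/\gamma^g$ being nonincreasing in $\gamma^g$, this shows the restricted ratio is at most the stated expression in the original quantities. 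I expect this monotonicity-under-restriction step, rather than the reduction to a single split, to be the main obstacle, because $\hat{c}$ (unlike $\alpha,\gamma$) is defined through singleton values $f(\{e\})$, which become $f(A\cup\{e\})$ under restriction; the numerator and denominator of $\hat{c}$ therefore shift in tandem and the comparison must be argued carefully rather than term by term.

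Finally, for the submodular-$g$ refinement I would repeat the reduction but invoke the sharper exponential guarantee available when both numerator and denominator are submodular (the fully submodular case of \citet{pmlr-v48-baib16,10.5555/3172077.3172251}), obtaining $1/(1-e^{\hat\alpha^{f}-1})$ in terms of the generalized inverse curvature of $f$. The only extra ingredients are that $g_{A^\star,B^\star}$ is submodular whenever $g$ is, and that the generalized inverse curvature does not increase under restriction, $\tilde\alpha^{f_{A,B}}\le\tilde\alpha^f$, proved exactly as in the curvature step. The time bound $O(2^rn^r\cdot n^2)$ is immediate, since there are $O(n^r)$ choices of $B$ and $2^r$ choices of $A$, and \textsc{Ratio Greedy} costs $O(n^2)$ on each piece.
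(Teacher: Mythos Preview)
Your plan follows the same route as the paper's proof: pick the split $B_f$ guaranteed by Proposition~\ref{prop:submodular-restriction} so that every piece $f_{A,B_f}$ is submodular, note that the global optimum $X^\star$ lives in the piece with $A^\star=X^\star\cap B_f$, and then invoke Theorem~\ref{thm:33} (resp.\ Theorem~\ref{thm:29} when $g$ is submodular) on that piece. The paper's argument is only a few lines and does not spell out the points you raise about normalization of $f_{A,B}$ or about passing from the restricted quantities $\gamma^{g_{A,B}}$, $\hat c^{f_{A,B}}$, $|X^\star\setminus B|$ back to $\gamma^g$, $\hat c^{f}(X^\star)$, $|X^\star|$; you are more careful here, and your monotonicity analysis of $h(m,c)=m/(c+(1-c)m)$ is a useful addition.

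One slip: in the submodular-$g$ case you write $\hat\alpha^f$ in the bound but then call it the ``generalized inverse curvature'' and argue about $\tilde\alpha^{f_{A,B}}\le\tilde\alpha^f$. The quantity in the statement is the \emph{total curvature} $\hat\alpha$ of Definition~\ref{def:total-curvature}, not $\tilde\alpha$ of Definition~\ref{def:inverse-curvature}; Theorem~\ref{thm:29} is stated for the total curvature of a submodular numerator. So the ``extra ingredient'' you need is that the total curvature does not increase under restriction, and you should also note that $\hat\alpha^f$ as written refers to the original (possibly non-submodular) $f$, whereas what Theorem~\ref{thm:29} actually controls is $\hat\alpha^{f_{A,B}}$ for the submodular piece. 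The paper does not address this point either, so your flagging of the restriction-versus-original comparison as the real work is accurate.
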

The first statement provides the same guarantee as a result of \citet{10.5555/3172077.3172251} but for a more general class of functions. 
Our result can be interpreted as grading the conditions of \citet{Wang2019MinimizingRO} to obtain similar guarantees as the more restrictive results of \cite{10.5555/3172077.3172251}. 
Next, we split both $f$ and $g$. 

\begin{restatable}[Guarantees for \textsc{r-Split Greedy Ratio} II]{theorem}{rsplitgreedyratiotwo} 
\label{thm:main-result2} 
Assume $f$ and $g$ are normalized positive monotone functions, 
with elementary submodular ranks $r_f+1$ and $r_g+1$. 
For the minimization of $f/g$,
the algorithm \textsc{r-Split} with \textsc{Greedy Ratio} subroutine at a time complexity of $O(2^{r_f+r_g} n^{r_f+r_g} \cdot n^2)$, has approximation ratio 
\[
    \frac{1}{1-e^{\hat{\alpha}^f-1}},  
\]
\end{restatable}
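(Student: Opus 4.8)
The plan is to combine the elementary-submodular-rank structure of both $f$ and $g$ so that, on one of the pieces examined by \textsc{r-Split}, the numerator and denominator are \emph{both} submodular, reducing the subproblem to the submodular-denominator case already handled in the second statement of Theorem~\ref{thm:main-result2a}. First I would apply Proposition~\ref{prop:submodular-restriction} twice: since $f$ has elementary submodular rank $r_f+1$ there is a set $B_f$ with $|B_f|=r_f$ such that $f_{A,B_f}$ is submodular for every $A\subseteq B_f$, and since $g$ has elementary submodular rank $r_g+1$ there is $B_g$ with $|B_g|=r_g$ such that $g_{A,B_g}$ is submodular for every $A\subseteq B_g$. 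The elementary fact I would use repeatedly is that submodularity on a lattice is preserved under restriction to the subcube obtained by fixing a set of coordinates; hence enlarging the split set only preserves submodularity.

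Next I would set $B = B_f\cup B_g$, padded by arbitrary elements of $V\setminus(B_f\cup B_g)$ so that $|B| = r_f+r_g$ (in the regime $r_f+r_g\le n$). For every $A\subseteq B$ we then have $B\supseteq B_f$ and $B\supseteq B_g$, so by the restriction-to-subcube fact both $f_{A,B}$ and $g_{A,B}$ are submodular. The algorithm \textsc{r-Split} with parameter $r_f+r_g$ (Algorithm~\ref{alg:modified-ratio-1}) exhausts all split sets of this size together with all $A\subseteq B$, so in particular it runs \textsc{Greedy Ratio} on the piece $\Pi(A^*,B)$ with $A^* = X^*\cap B$. On this piece the denominator $g_{A^*,B}$ is submodular and the numerator $f_{A^*,B}$ is submodular, so the guarantee of \citet{10.5555/3172077.3172251} — equivalently the second statement of Theorem~\ref{thm:main-result2a} — gives a returned set with ratio at most $1/(1-e^{\hat{\alpha}^{f_{A^*,B}}-1})$ times the optimum of the subproblem $\min_{S\in\Pi(A^*,B)} f(S)/g(S)$. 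Since $X^*\cap B = A^*$ we have $X^*\in\Pi(A^*,B)$, so this subproblem optimum is at most $f(X^*)/g(X^*)$.

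To convert the piecewise curvature into the global one I would show $\hat{\alpha}^{f_{A^*,B}}\le \hat{\alpha}^f$, in the same spirit as Proposition~\ref{prop:restricted-alpha-gamma}, and use that $x\mapsto 1/(1-e^{x-1})$ is increasing on $(-\infty,1)$; together these replace $\hat{\alpha}^{f_{A^*,B}}$ by $\hat{\alpha}^f$ and yield the stated ratio $1/(1-e^{\hat{\alpha}^f-1})$. Because \textsc{r-Split} returns the best set seen over all splits, the overall guarantee is at least this good. The time bound is then immediate: \textsc{r-Split} examines $O(2^{r_f+r_g}n^{r_f+r_g})$ pieces and \textsc{Greedy Ratio} costs $O(n^2)$ on each, giving $O(2^{r_f+r_g}n^{r_f+r_g}\cdot n^2)$.

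The main obstacle I anticipate is the curvature comparison $\hat{\alpha}^{f_{A^*,B}}\le\hat{\alpha}^f$. Unlike the generalized curvature treated in Proposition~\ref{prop:restricted-alpha-gamma}, the total curvature governing the \citet{10.5555/3172077.3172251} bound compares marginals at \emph{both} the empty set and the full ground set, and passing to a piece shifts both baselines simultaneously. I would handle this by rewriting each marginal of $f_{A^*,B}$ as a marginal of $f$ with the coordinates of $A^*$ adjoined, and then using monotonicity of the marginals (together with submodularity on the piece) to argue the relevant ratio only improves. A secondary point to verify carefully is that the exhaustive level-$(r_f+r_g)$ splitting of \textsc{r-Split} genuinely contains the piece built from $B_f\cup B_g$, so that invoking the subroutine guarantee there is legitimate.
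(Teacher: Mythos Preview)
Your approach is exactly the paper's: run \textsc{r-Split} at level $r=r_f+r_g$, take $B=B_f\cup B_g$ so that both $f_{A,B}$ and $g_{A,B}$ are submodular on every piece, and invoke the submodular/submodular ratio guarantee. Two small notes: the $1/(1-e^{\hat\alpha^f-1})$ bound you need is Theorem~\ref{thm:29}(3) of \citet{pmlr-v48-baib16}, not \citet{10.5555/3172077.3172251}; and the paper's one-paragraph proof never argues the curvature comparison $\hat\alpha^{f_{A^*,B}}\le\hat\alpha^f$ that you flag as the main obstacle---it simply cites Theorem~\ref{thm:29} and writes the final bound with $\hat\alpha^f$---so your worry there already goes beyond what the paper establishes.
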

This 
improves the approximation ratio guarantee of
\citet{Wang2019MinimizingRO} 
to that of 
\citet{pmlr-v48-baib16}, which was valid only if $f,g$ are both submodular. 
Table~\ref{tab:max-compare-ratio} compares our results with prior work. 
 
\begin{table}[]
    \caption{Approximation ratios and time complexity for minimizing $f/g$ for monotone, normalized functions $f$ and $g$ on $2^{[n]}$. 
    Here $f$ has total curvature $\hat{\alpha}^f$, generalized curvature $\alpha^f$, generalized inverse curvature $\tilde{\alpha}^f$, curvature $\hat{c}^f$, and elementary submodular rank $r_f+1$. $g$ has submodularity ratio $\gamma^g$ and elementary submodular rank $r_g+1$. $X^*$ is the optimal solution. 
    Prior works are discussed in detail in Appendix~\ref{app:details-set-function-optimization}. 
    }
    \label{tab:max-compare-ratio}
    \centering
    \small 
\renewcommand{\arraystretch}{0}
    \begin{tabular}{p{.01in}C{.70in}C{.74in}C{1.50in}C{1.1in}C{.43in}}
    \toprule 
      & Numerator & Denominator & Approx.\ Ratio  & Time & Ref.\\ \midrule
        \multirow{5}{*}{\rotatebox{90}{Prior Work\;\;\;\;\;}}  & Modular & Modular & 1 & $O(n^2)$ &  
        \cite{pmlr-v48-baib16} \\
        & Modular & Submodular &  $1-e^{-1}$ & $O(n^2)$ & 
        \cite{pmlr-v48-baib16} \\ 
        & Submodular & Submodular & $\frac{1}{1-e^{\hat{\alpha}^f-1}}$ & $O(n^2)$ &  
        \cite{pmlr-v48-baib16} \\
        & Submodular & - & $\frac{1}{\gamma_{\emptyset,|X^*|}^g}\frac{|X^*|}{1+(|X^*|-1)(1-\hat{c}^f(X^*))}$ & $O(n^2)$ & 
        \cite{10.5555/3172077.3172251}\\
        & - & - & $\frac{1}{\gamma^g_{\emptyset,|X^*|}}\frac{|X^*|}{1+(|X^*|-1)(1-\alpha^f)(1-\tilde{\alpha}^f)}$ & $O(n^2)$ &
        \cite{Wang2019MinimizingRO}\\\midrule 
        \multirow{3}{*}{\rotatebox{90}{\makecell{This Work}}} & Low Rank & - & $\frac{1}{\gamma_{\emptyset,|X^*|}^g}\frac{|X^*|}{1+(|X^*|-1)(1-\hat{c}^f(X^*))}$  & $O(2^{r_f}n^{r_f} \cdot n^2)$ & Thm~\ref{thm:main-result2a}\\
        & Low Rank & Submodular & $\frac{1}{1-e^{\hat{\alpha}^f-1}}$ & $O(2^{r_f} n^{r_f} \cdot n^2)$ & Thm~\ref{thm:main-result2a} \\
        & Low Rank & Low Rank & $\frac{1}{1-e^{\hat{\alpha}^f-1}}$  & \!\!\!\!$O(2^{r_f+r_g}n^{r_f+r_g} \cdot n^2)$ & Thm~\ref{thm:main-result2}  \\
        \bottomrule 
    \end{tabular}
\end{table}
 
\section{Experiments} 
\label{sec:experimental-results}

We present experiments to support our theoretical analysis. 
We consider four types of functions: \textsc{Determinantal}, \textsc{Bayesian}, \textsc{Column}, and \textsc{Random}, detailed in Appendix~\ref{app:experiments}.  

\paragraph{Submodularity ratio and generalized curvature.} 
We compute $\alpha_r$ and $\gamma_r$ and the resulting approximation ratio guarantee for constrained maximization for functions of the four different types, for $n = 8$ and 
log number of pieces $0\leq r\leq 4$. We report the mean and standard error for $50$ functions in each case. 
Figure~\ref{fig:boundk} shows that the approximation ratio guarantee can increase quickly: for \textsc{Determinantal} it improves by 400\% by $r=4$. 

\begin{figure}[!ht]
    \centering
    \subfloat[$\frac{1}{\alpha_r}(1-e^{-\alpha_r \gamma_r})$\label{fig:boundk}]{\includegraphics[width=0.33\linewidth]{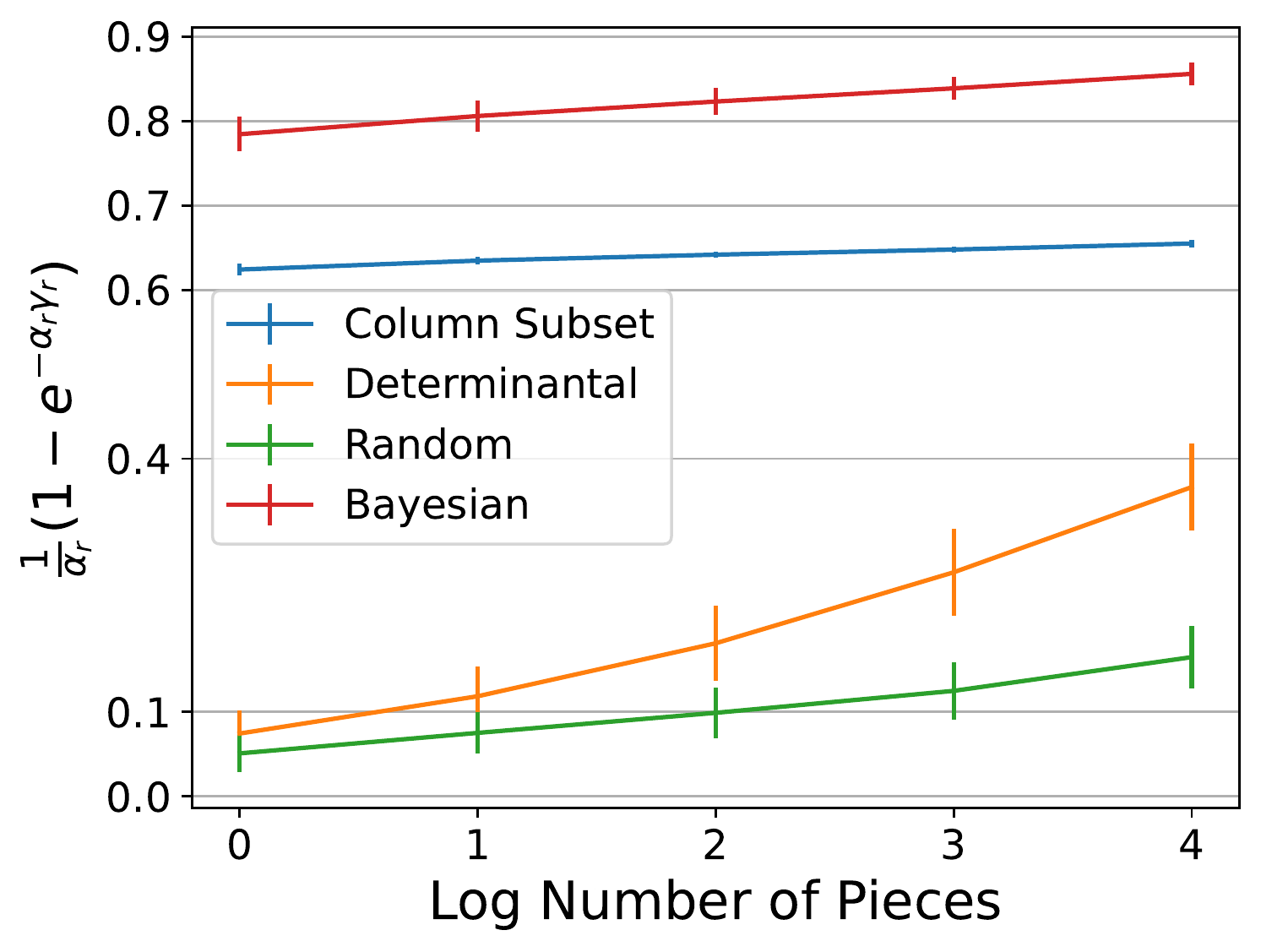}}\hfill
    \subfloat[Relative errors \label{fig:error}]{\includegraphics[width=0.33\linewidth]{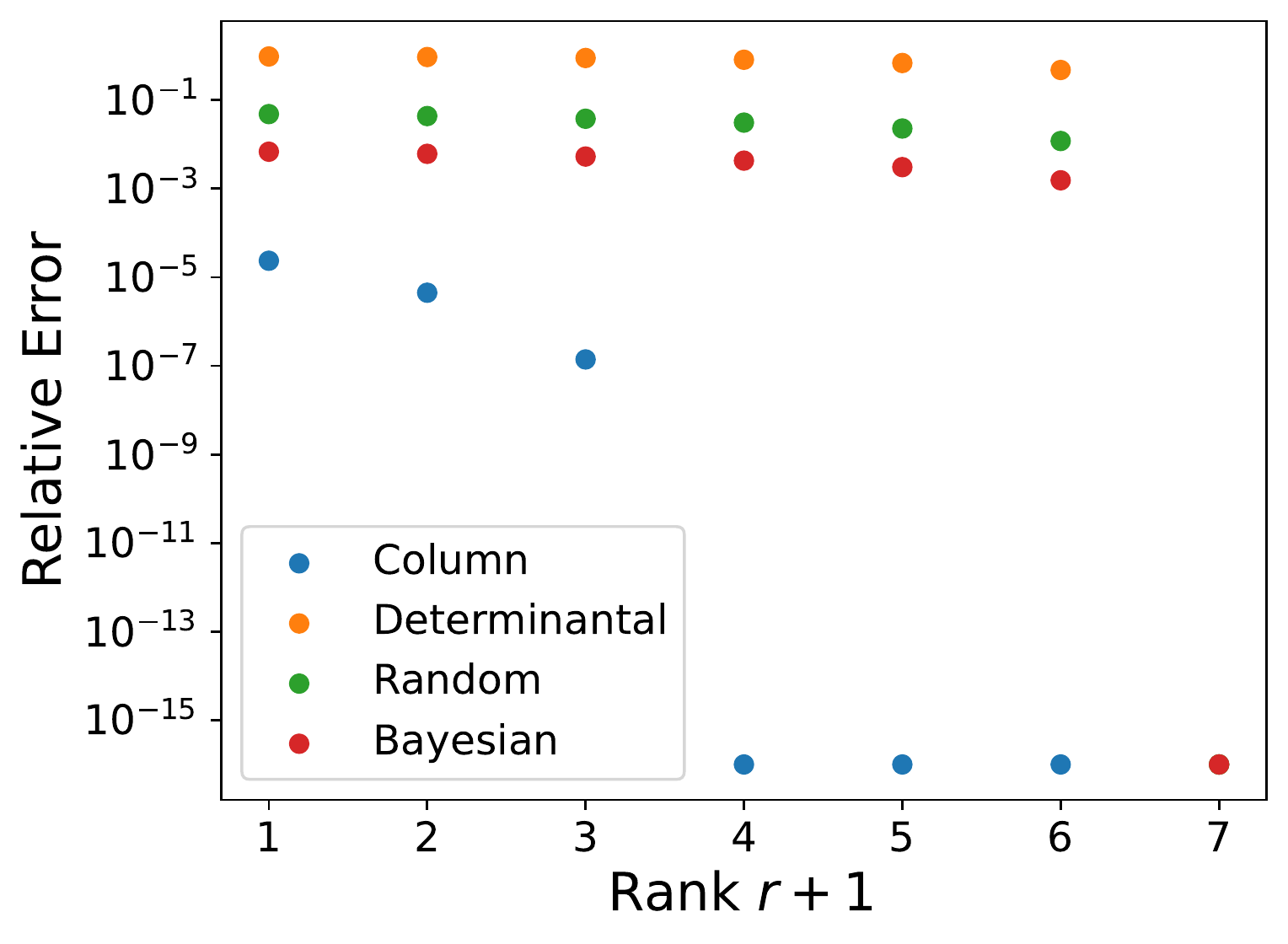}}\hfill
    \subfloat[Running time in seconds \label{fig:times}]{\includegraphics[width=0.33\linewidth]{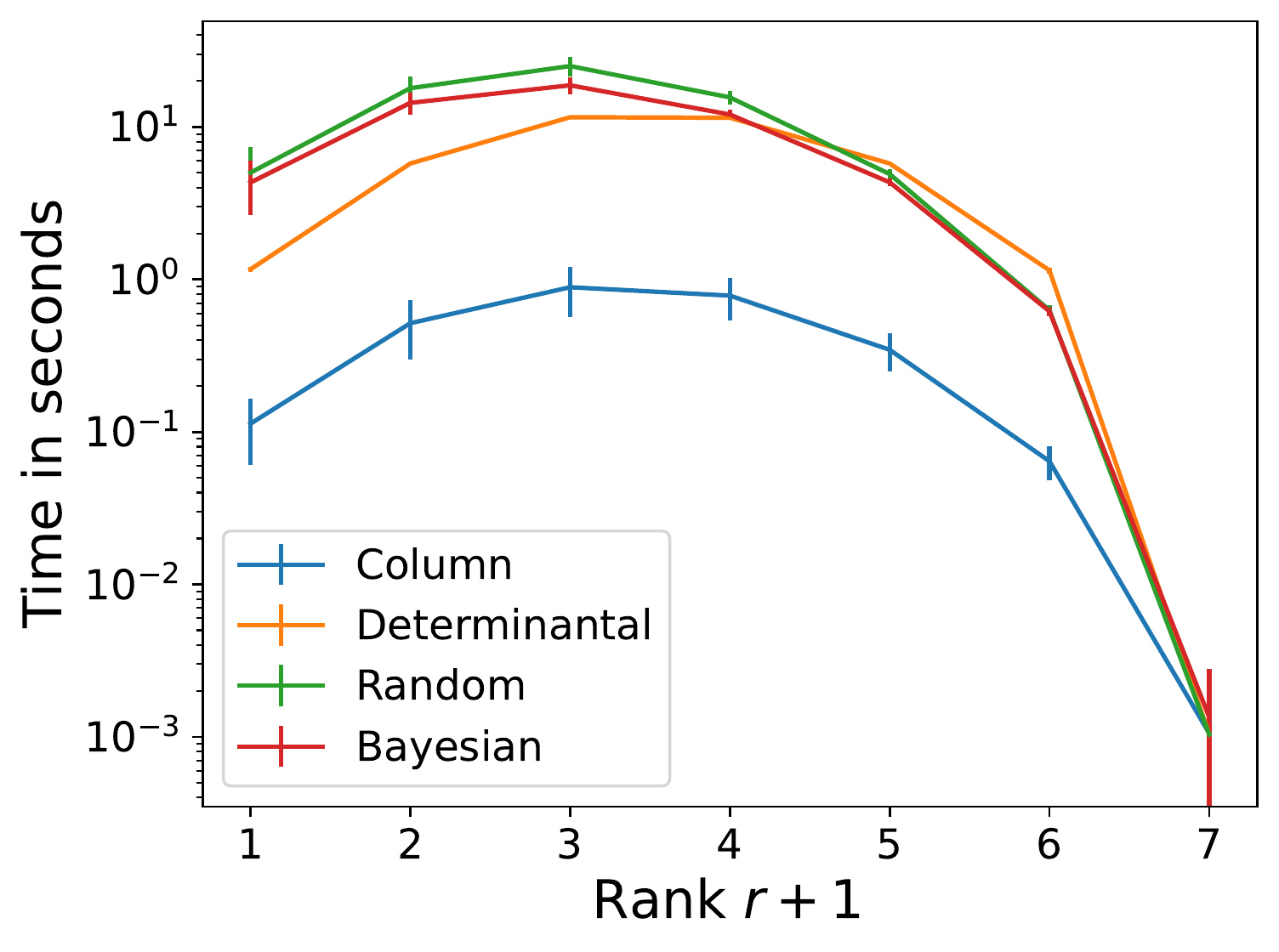}}
    \caption{Shown are (a) how the bound from \cite{bian2017guarantees} changes when we use $\alpha_r$ and $\gamma_r$ (see Definition \ref{def:alpha-gamma-min-max}); higher values correspond to better guarantees, (b) the relative error when approximating a function by an elementary rank-$(r+1)$ function, and (c) the running times for computing the approximation. }
    \label{fig:alpha-gamma-k}
\end{figure}

\paragraph{Low elementary rank approximations.}
We compute low elementary submodular rank approximations 
for $n=7$ and $1\leq r+1 \leq 7$. 
Figure \ref{fig:error}
shows the relative error ${\|f-g\|_{\ell_2}}/{\|f\|_{\ell_2}}$ and Figure \ref{fig:times} the running times (see Appendix~\ref{app:details-computing-decomposition}). 
We see that \textsc{Column} has a low elementary submodular rank, $r+1=4$. 
The computation time peaks near $r+1=n/2$ and decreases for larger $r+1$ as there are fewer Minkowski sums and fewer constraints. 

\paragraph{r-Split Greedy with small $\boldsymbol{n}$.} 
We evaluate the improvement that \textsc{r-Split} provides over \textsc{Greedy}. 
Let $n = 20$ and maximize functions with a cardinality constraint $m=10$. 
Figure~\ref{fig:ratio} shows the approximation ratio for \textsc{Greedy} and $\textsc{r-Split Greedy}$ for $r = 1,2,3$, as well as how often the optimal solution was found. 
All four algorithms outperform their theoretical bounds. 
In all cases, increments in $r$ increase the percentage of times the (exact) optimal solution is found. 

\begin{figure}[!ht]
    \centering 
    \subfloat[Approximation Ratio \label{fig:ratio}]{\includegraphics[width = 
    0.33\linewidth]{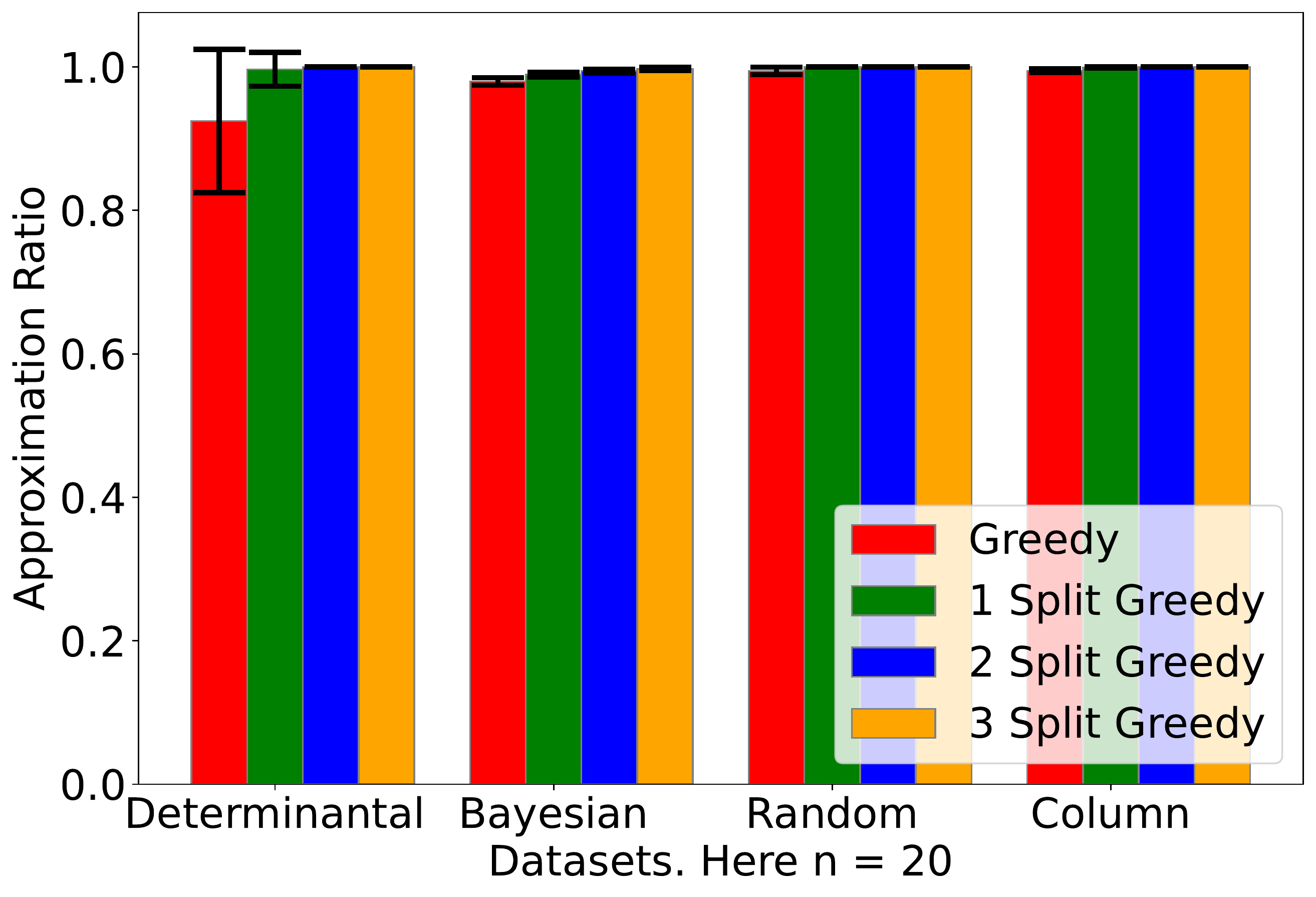}}\hfill
    \subfloat[
    Exact Solutions \label{fig:opt}]{\includegraphics[width = 0.33\linewidth]{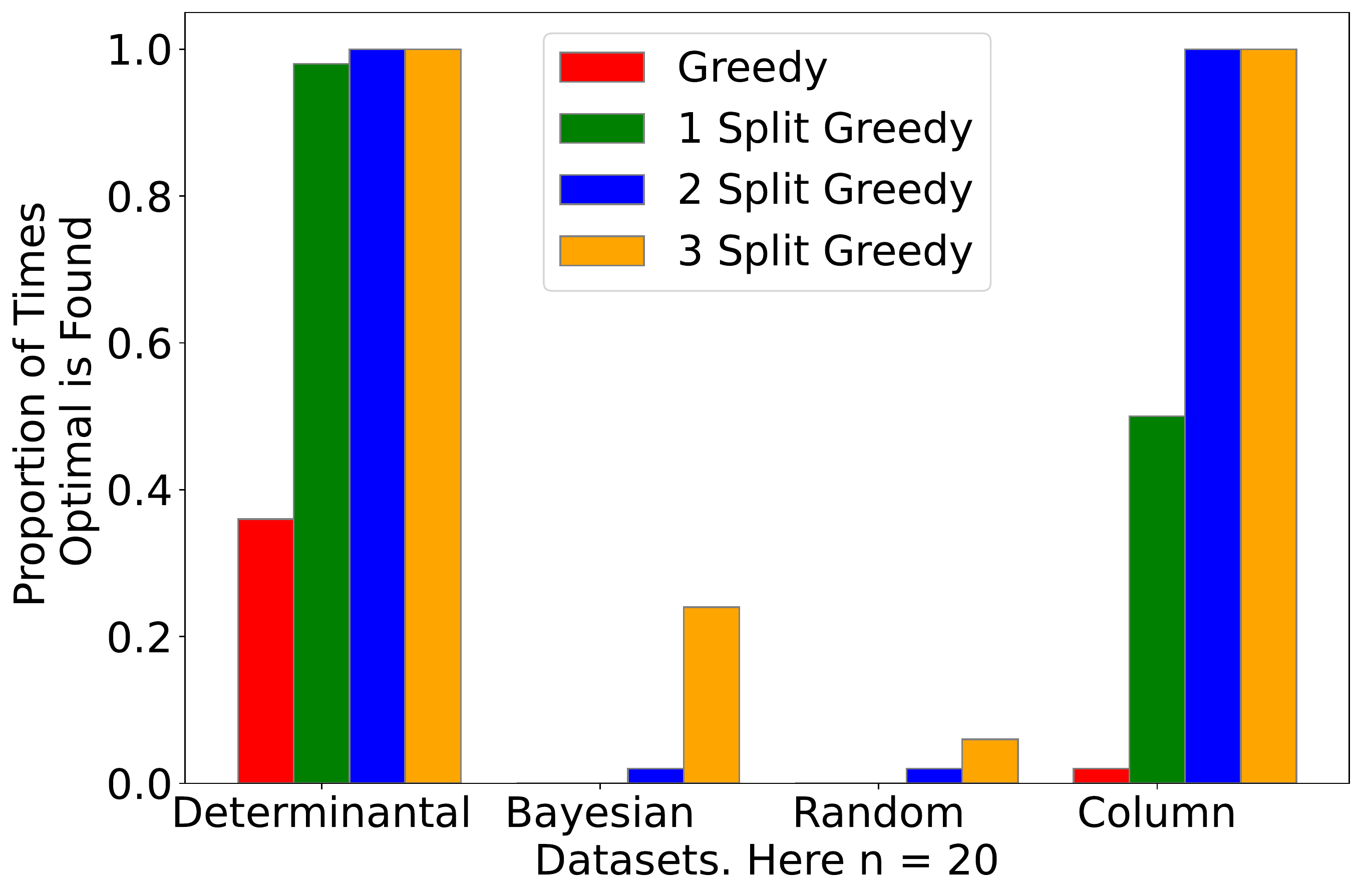}}
    \subfloat[\label{fig:ratios} Split / Greedy Ratio]{\includegraphics[width=0.33\linewidth]{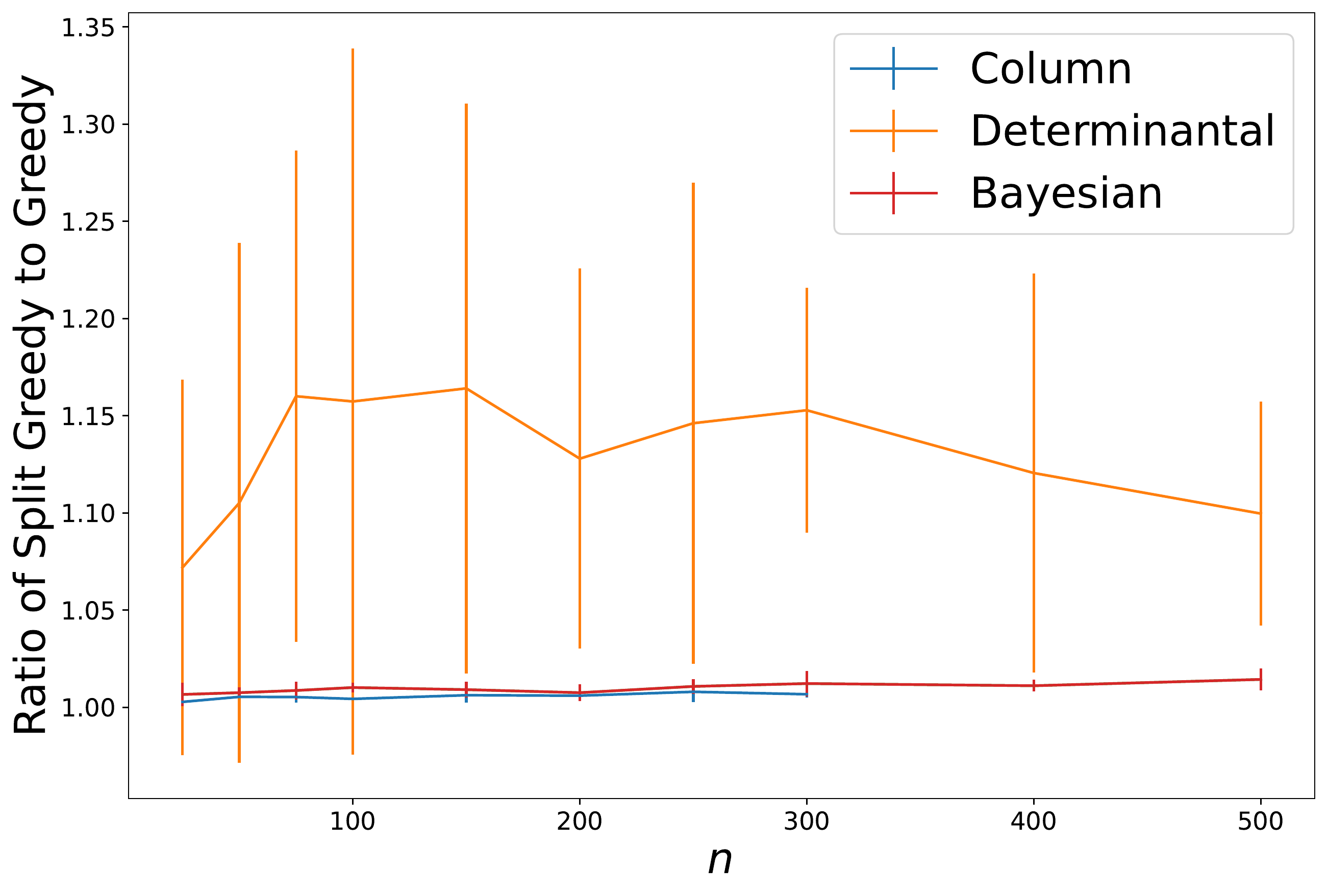}}
    \caption{Shown are (a) the approximation ratio for the solution found versus the optimal solution averaged over 50 objective functions in each type, (b) the proportion of instances for which the algorithms find the optimal solution, (c) the ratio of the solution found by \textsc{1 Split} and \textsc{Greedy}.} 
    \label{fig:bar}
\end{figure}

\paragraph{r-Split Greedy with large $\boldsymbol{n}$.}
We now consider 
larger values of $n$, with a maximum cardinality of 15. Since we do not know the optimal solution, in Figure \ref{fig:ratios}
we plot the ratio of \textsc{r-Split Greedy} to \textsc{Greedy}. 
For \textsc{Bayesian} 
our approach improves the quality of the solution found by about 1\% and for \textsc{Determinantal} 
by 5--15\% on average. 
Running the experiment for \textsc{Random} is not viable as we cannot store it for oracle access. Computing the \textsc{Column} function takes longer than computing \textsc{Determinantal} and \textsc{Bayesian}. Hence we only ran it until $n=300$. For $n=300$, there are $\binom{300}{15} \approx 8 \times 10^{24}$ possible solutions. 

%%%%%%%%%%%%%%%%%%%%%%%%%%%%%%%%%%%%%%%%%%%%%%%%%%%%%%%%%%%%%%%%%%%%%%%%
\section{Conclusion} 

We introduced the notion of sub-/supermodular rank for functions on a lattice along with geometric characterizations. 
Based on this we developed algorithms for constrained set function maximization and ratios of set functions minimization with theoretical guarantees improving several previous guarantees for these problems. 
Our algorithms do not require knowledge of the 
rank decomposition and show improved empirical performance on several commonly considered tasks even for small choices of $r$. 
For large $n$ it becomes unfeasible to evaluate all splits for large $r$, and one could consider evaluating only a random selection. 
It will be interesting to study in more detail the rank of typical functions. 
The theoretical complexity and practical approaches for computing rank decompositions remain open problems with interesting consequences. 
Another natural extension of our work is to consider general lattices, involving non-binary variables. 

\subsection*{Acknowledgments} 
RS and GM have been supported in part by DFG SPP~2298 grant 464109215. GM has been supported by NSF CAREER 2145630, NSF 2212520, ERC Starting Grant 757983, and BMBF in DAAD project 57616814. AS was supported by the Society of Fellows at Harvard University.

\nocite{sage, Gawrilow2000, assarf2014computing}
\printbibliography

\newpage 

\appendix 

%%%%%%%%%%%%%%%%%%%%%%%%%%%%%%%%%%%%%%%%%%%%%%%%%%%%%%%%%%%%%%%%%%%%%%%%
\section{Notation}  

We summarize our notation in the following table. 

\begin{tabular}{p{.6in}|p{4.5in}}
    \xrowht[()]{10pt}
   $[n]$            &  $\{1,\ldots, n\}$. 
   \\  \hline\xrowht[()]{10pt}    
   $(X, \pi)$     & A poset $X$ with partial order defined by a tuple $\pi$. \\ \hline \xrowht[()]{10pt}
   $f$ & A function from a poset $(X, \pi)$ to $\mathbb{R}$. \\ \hline
   \xrowht[()]{10pt}
   $\mathcal{L_{\pi}}$ & The cone of $\pi$-supermodular functions. 
   \\
   \hline\xrowht[()]{10pt} 
   $\pi$              & Tuple of $n$ linear orders $\pi = (\pi_1, \ldots, \pi_n)$, see Definition~\ref{def:pi_supermodularity}.  \\ \hline\xrowht[()]{10pt}
   $A^{(ij)}$ & The $(ij)$ elementary imset matrix, a $2^{n-2} \times 2^n$ matrix that collects the imset inequalities for each $x\in\{0,1\}^{n-2}$, see Definition~\ref{def:facets}. 
   \\ \hline\xrowht[()]{10pt}
   $\tau$ & Sign vector $\tau=(\tau_1,\ldots, \tau_n)$ of a tuple of linear orders $\pi$, see Definition~\ref{def:the_sign_vector}. 
   \\ \hline\xrowht[()]{10pt} 
   $\xi$ & Vector in $\{-1,0,1\}^{\binom{n}{2}}$, described in Definition~\ref{def:xi-cone}. 
   \end{tabular}

   Notions of curvature and submodularity ratio. 
   
   \begin{tabular}{p{.6in}|p{4.5in}}
   \xrowht[()]{10pt} 
   $\hat{\alpha}$ & The \emph{total curvature} of a normalized, monotone increasing submodular function $f$ is $\hat{\alpha} := \max_{e \in \Omega}\frac{\Delta(e|\emptyset) - \Delta(e|V\setminus \{e\})}{\Delta(e|\emptyset)}$, where $\Omega = \{ e \in V \colon \Delta(e|\emptyset) > 0\}$ \\ \hline \xrowht[()]{10pt} 
   $\gamma_{X,m}$ & The \emph{submodularity ratio} of a non-negative monotone function w.r.t.\ set $X$ and integer $m$ is $\gamma_{X,m} := \min_{T \subset X, S \subset V, |S| \le m, S \cap T = \emptyset} \frac{\sum_{e \in S}\Delta(e|T)}{\Delta(S|T)}$. Subscripts dropped for $X = V$, $m = k$ \\ \hline \xrowht[()]{10pt} 
   $\alpha$ &  The \emph{generalized curvature} of a non-negative monotone set function is the smallest $\alpha$ s.t.\ for all $T, S \in 2^V$ and $e \in S \setminus T$,
   $\displaystyle \Delta(e | (S \setminus \{e\}) \cup T) \ge (1-\alpha)\Delta(e | S \setminus \{e\})$ \\ \hline \xrowht[()]{10pt} 
   $\tilde{\alpha}$ & The \emph{generalized inverse curvature} of a non-negative set function $f$ is the smallest $\tilde{\alpha}^f$ such that for all $T, S \in 2^V$ and for all $e \in S \setminus T$, $\displaystyle \Delta(e | S \setminus \{e\}) \ge (1-\tilde{\alpha}^f) \Delta(e | (S \setminus \{e\}) \cup T).$ \\ \hline \xrowht[()]{10pt} 
   $\hat{c}$ & The \emph{curvature} of $f$ with respect to $X$ is $\hat{c}^f(X) := 1 - \frac{\sum_{e \in X} (f(X) - f(X\setminus \{e\})}{\sum_{e\in X} f(\{e\})}$  \\
\end{tabular}

\section{Background on Posets} 
\label{app:background-supermodular}

 We introduce relevant background for posets and partial orders.

\begin{defn}
Let $(X,\preceq)$ be a partially ordered set (poset). 
Given two elements $x,y \in X$, 
    \begin{enumerate}[leftmargin=*]
        \item The \emph{greatest lower bound} $x \wedge y$ is a $z \in X$ such that $z \prec x,y$ and $w \preceq z$ for all $w \prec x,y$. 
        \item The \emph{least upper bound} $x \vee y$ is a $z \in X$ such that $x,y \prec z$ and $z \preceq w$ for all $x,y \prec w$. 
    \end{enumerate}
Posets such that any two elements have a least upper bound and a greatest lower bound are called \emph{lattices}.
\end{defn}

\begin{example}
\hfill
    \begin{enumerate}[leftmargin=*]
        \item Let $X$ be the power set of some set, with $\prec$ the inclusion order. Given $x,y \in X$, we have $x \wedge y = x \cap y$ and $x \vee y = x \cup y$.
        \item Let $X = [d_1]' \times [d_2]' \times \cdots \times [d_n]'$, where $[n]' := \{0,1,2,3,\hdots,n-1\}$.
        Fix $x = (x_1, \ldots x_n)$ and $y = (y_1, \ldots, y_n)$ in $X$.
Let $x \prec y$ if and only if $x \neq y$ and $x_i \le y_i$, for all $i = 1,\ldots,n$, where $\le$ is the usual ordering on natural numbers. Then $x \wedge y = (\min(x_1, y_1), \ldots, \min(x_n,y_n))$ and $x \vee y = (\max(x_1, y_1), \ldots, \max(x_n,y_n))$.
        
        \item Let $X = X_1 \times \cdots \times X_n$, where $(X_i, \le_i)$ are linearly ordered spaces for $i = 1, \ldots, n$.
        For $x, y \in X$, let $x \prec y$ if and only if $x_i \le_i y_i$, for $i = 1,\ldots,n$ and $x \neq y$, where $\le_i$ is the linear ordering on $X_i$. Then $x \wedge y = (\min_{\le_1}(x_1, y_1), \ldots, \min_{\le_n}(x_n,y_n))$ and $x \vee y = (\max_{\le_1}(x_1, y_1), \ldots, \max_{\le_n}(x_n,y_n))$.
    \end{enumerate}
\end{example}

\section{Details on Supermodular Cones} 

We give examples to illustrate the elementary imset inequality matrices from Definition~\ref{def:facets}.

\begin{example}[Elementary imset inequalities] 
    Given $f\colon \{0,1\}^n \to \mathbb{R}$, the elementary imset inequalities are
\begin{equation} \label{eq:imset_binary}
      f_{\cdots 0 \cdots 1 \cdots} + f_{ \cdots 1 \cdots 0 \cdots} \leq f_{\cdots 0 \cdots 0 \cdots} + f_{\cdots 1 \cdots 1 \cdots} , 
\end{equation}
where $f_{\cdots 0 \cdots 1 \cdots}:= f(\cdots 0 \cdots 1 \cdots )$ and an index $(\cdots a \cdots b \cdots)$ has varying entries at two positions $i$ and $j$. 
Fixing $i$ and $j$, one has $2^{n-2}$ inequalities in~\eqref{eq:imset_binary}. For example, if $i = 1$ and $j=2$ then one obtains two inequalities:
\[ \begin{matrix} f_{010} + f_{100} \leq f_{000} + f_{110} \phantom{.}\\ 
 f_{011} + f_{101} \leq f_{001} + f_{111}.
 \end{matrix} 
\] 
\end{example}

\begin{example}[Three-bit elementary imset inequality matrix] 
For $n = 3$, $A^{(12)}$ is the $2 \times 8$ matrix
\[ A^{(12)} = 
\begin{blockarray}{ccccccccc}
& 000 & 001 & 010 & 011 & 100 & 101 & 110 & 111 \\
\begin{block}{c(cccccccc)}
  0 & 1 &  & -1 &  & -1 & & 1 & \\
  1 &  & 1 &  & -1 &  & -1 & & 1 \\
\end{block}
\end{blockarray}
 .\]
\end{example}

We prove Lemma~\ref{lem:facets}, which describes the $\pi$-supermodular cones in terms of signed elementary imset ienqualities: 

\lemfacets*
\begin{proof}
The result is true by definition if $\tau = (1, \ldots, 1)$.
Fix two binary vectors $x$ and $y$.
If $\tau_i = 1$, the greatest lower bound $x \wedge y$ is a binary vector with $\min(x_i, y_i)$ at position $i$, while the least upper $x \vee y$ bound has $\max(x_i, y_i)$ at position $i$. 
If $\tau_i = -1$, then the greatest lower bound $x \wedge y$ has $\max(x_i, y_i)$ at position $i$, while $x \vee y$ has 
$\min(x_i, y_i)$ at position $i$. Fix $n = 2$ and assume $\tau = (-1, 1)$. Then $(00) \wedge (11) = (01)$ and $(00) \vee (11) = (10)$. Hence the supermodular inequality~\eqref{eq:supermodular} applies to $x = (00)$ and $y = (11)$ to give
\[
      f_{00} + f_{11} \leq f_{01} + f_{10} .
\]
For general $n$,
assume that $(\tau_i, \tau_j) = (-1,1)$ with (without loss of generality) that $i < j$. Then 
\[
      f_{\cdots 0 \cdots 0 \cdots} + f_{ \cdots 1 \cdots 1 \cdots} \leq f_{\cdots 0 \cdots 1 \cdots} + f_{\cdots 1 \cdots 0 \cdots} .
\]
This is~\eqref{eq:imset_binary} with the sign of the inequality reversed.
That is, with this partial order, the inequalities involving positions $i$ and $j$ are those of $-A^{(ij)}$.
If $\tau = (-1,-1)$ then $(01) \wedge (10) = (00)$ and $(01) \vee (10) = (11)$ and there is no change in sign to the inequalities $A^{(ij)}$.
\end{proof}

\begin{example}[Four-bit supermodular functions] 
\label{eg:m42} 
For four binary variables, there are $2^{4-1} = 8$ distinct supermodular comes $\mathcal{L}_\pi$, given by a sign vector $\tau \in \{\pm 1\}^4$ up to global sign change, namely $(1, 1, 1, 1)$, $(-1, 1, 1, 1)$, $(1, -1, 1, 1)$, $(1, 1, -1, 1)$, $(1, 1, 1, -1)$, $(-1, -1, 1, 1)$, $(-1, 1, -1, 1)$, $(-1, 1, 1, -1)$. 
Each cone is described by $\binom{4}{2} \times 2^2 = 24$ elementary imset inequalities, 
collected into $\binom{4}{2}$ matrices $A^{(ij)} \in \mathbb{R}^{4 \times 16}$, 
where the sign of the inequality depends on the product $\tau_i\tau_j$. We give the signs of the inequalities for three sign vectors. 
\begin{center}
\begin{tabular}{c|cccccc}
     $\tau$ & $A^{(12)}f$ & $A^{(13)}f$ & $A^{(14)}f$ & $A^{(23)}f$ & $A^{(24)}f$ & $A^{(34)}f$ \\
     \hline
     $(1, 1, 1,1)$ & $+$ & $+$ & $+$ & $+$ & $+$ & $+$ \\ 
     $(-1, 1, 1,1)$ & $-$ & $-$ & $-$ & $+$ & $+$ & $+$ \\ 
     $(-1, -1, 1,1)$ & $+$ & $-$ & $-$ & $-$ & $-$ & $+$ \\ 
\end{tabular}
\end{center}
\end{example}

\section{Details on Supermodular Rank} 
\label{app:details-minkowski-sums-supermodular-cones} 
We provide details and proofs for the results in Section~\ref{sec:minkowski-sums-of-supermodular-cones}. 

\subsection{Facet Inequalities of Minkowski Sums} 
\label{app:general-facet-inequalities-minkowski}

We prove elementary properties of Minkowski sums of polyhedral cones.
If two cones lie on the same side of a hyperplane through the origin, then so does their Minkowski sum, since $u^\top x_1 \geq 0$ and $u^\top x_2 \geq 0$ implies $u^\top (x_1 + x_2) \geq 0$. 
Moreover, the Minkowski sum of two convex cones $\mathcal{P}_1$ and $\mathcal{P}_2$ is convex, since 
$$
\mu (x_1 + x_2) + (1-\mu) (y_1 + y_2) = (\mu x_1 + (1 - \mu) y_1) + ( \mu x_2 + (1 - \mu) y_2)
$$ 
holds, for $x_i, y_i \in \mathcal{P}_i$.
Given a polyhedral cone $\mathcal{P}$ defined by inequalities $Ax \geq 0$, we denote by $-\mathcal{P}$ the cone defined by the inequalities $Ax \leq 0$. 
We write $Av > 0$ if every entry of the vector $Av$ is strictly positive.

\begin{prop} 
\label{prop:msum} 
Let $\mathcal{P} \subseteq \mathbb{R}^n$
be a full-dimensional polyhedral cone. Then $\mathcal{P} + (-\mathcal{P}) = \mathbb{R}^n$. 
\end{prop}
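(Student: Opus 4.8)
The plan is to prove the nontrivial inclusion $\mathbb{R}^n \subseteq \mathcal{P} + (-\mathcal{P})$, since the reverse inclusion is immediate. Write $\mathcal{P} = \{x \in \mathbb{R}^n : Ax \geq 0\}$. Because $\mathcal{P}$ is full-dimensional, it has nonempty topological interior, so the first step is to fix a point $v$ in the interior at which all the defining inequalities are strict, that is, $Av > 0$ (every coordinate of $Av$ strictly positive).

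Next, given an arbitrary target $w \in \mathbb{R}^n$, I would exhibit it as the sum of a point of $\mathcal{P}$ and a point of $-\mathcal{P}$. The key observation is the identity $A(tv + w) = t\,(Av) + Aw$. Since every coordinate of $Av$ is strictly positive, choosing the scalar $t$ large enough — larger than the maximum over coordinates $k$ of the quantity $-(Aw)_k / (Av)_k$ — forces every coordinate of $t\,(Av) + Aw$ to be nonnegative. Hence $tv + w \in \mathcal{P}$ for all sufficiently large $t$. Since $v \in \mathcal{P}$ and $\mathcal{P}$ is a cone, we also have $tv \in \mathcal{P}$, so $-tv \in -\mathcal{P}$. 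The decomposition $w = (tv + w) + (-tv)$ then writes $w$ as a point of $\mathcal{P}$ plus a point of $-\mathcal{P}$, giving $w \in \mathcal{P} + (-\mathcal{P})$, as required.

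The only step that genuinely needs care is the very first one: producing a strictly interior point $v$ with $Av > 0$. This is exactly where full-dimensionality is essential, since for a lower-dimensional cone no such $v$ exists and the statement would fail. I would justify it with the standard fact that a polyhedral cone has nonempty interior if and only if it is full-dimensional, and that its interior consists precisely of the points satisfying all facet-defining inequalities strictly; taking any facet representation and any interior point yields the desired $v$. Everything after that is an elementary scaling estimate, so I expect the extraction of $v$ to be the main (and essentially only) obstacle.
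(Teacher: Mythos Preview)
Your proposal is correct and follows essentially the same approach as the paper: pick an interior point $v$ with $Av>0$ using full-dimensionality, then use a scaling argument to decompose an arbitrary point. The only cosmetic difference is that the paper writes $z=\tfrac12(z+tv)+\tfrac12(z-tv)$ and checks both halves, whereas your decomposition $w=(tv+w)+(-tv)$ is slightly more direct since $-tv\in-\mathcal{P}$ is immediate.
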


\begin{proof}
Let $\mathcal{P} = \{v \in \mathbb{R}^n \colon Av \ge 0 \}$.
 There exists $v \in \mathcal{P}$ with $Av >0$, since $\mathcal{P}$ is full dimensional. Fix $z \in \mathbb{R}^n$.
For sufficiently large $t$, we have  $A(z+tv) \ge 0$ and $A(z - tv) \le 0$. Hence $z+tv \in \mathcal{P}$ and $z - tv \in -\mathcal{P}$. We have $z = \frac12 (z + tv) + \frac12(z - tv)$. 
Since $\mathcal{P}$ is closed under scaling by positive scalars, the first summand lies in $\mathcal{P}$ and the second in $-\mathcal{P}$. Hence $z$ is in the Minkowski sum. 
For a pictorial proof, see Figure \ref{fig:proof}. 
\end{proof}

\begin{prop} 
\label{prop:sum}
Given matrices $A_i \in \mathbb{R}^{n_i \times N}$, fix the two polyhedral cones
\[ \mathcal{P}_1 = \{ v \in \mathbb{R}^N : A_1v \ge 0, A_2v \ge 0, A_3v \ge 0 \} \phantom{.}\] 
\[ \mathcal{P}_2 = \{ v \in \mathbb{R}^N : A_1 v \ge 0, A_2v \le 0, A_4v \ge 0 \}. \] 
 Assume that the set
 \[ S = \{ v \in \mathbb{R}^N: A_1v=0, A_2v > 0, A_3v > 0,  A_4v < 0 \} \]
 is non-empty. 
 Then $\mathcal{P}_1 + \mathcal{P}_2 = \{v  \in \mathbb{R}^N \colon A_1v \ge 0\}$. 
\end{prop}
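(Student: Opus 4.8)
The plan is to prove the two inclusions separately. The inclusion $\mathcal{P}_1 + \mathcal{P}_2 \subseteq \{v \colon A_1 v \ge 0\}$ is immediate: both $\mathcal{P}_1$ and $\mathcal{P}_2$ satisfy $A_1 v \ge 0$, and this constraint is preserved under addition, since $A_1 p_1 \ge 0$ and $A_1 p_2 \ge 0$ give $A_1(p_1 + p_2) \ge 0$. This is exactly the half-space closure observation recorded just before Proposition~\ref{prop:msum}, so no work is needed here.

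For the reverse inclusion I would mimic the proof of Proposition~\ref{prop:msum}, using the nonemptiness of $S$ to supply a splitting direction. Fix any $w \in S$, so that $A_1 w = 0$, $A_2 w > 0$, $A_3 w > 0$, and $A_4 w < 0$ entrywise. Given any $z$ with $A_1 z \ge 0$, the idea is to write $z = p_1 + p_2$ with $p_1 = \tfrac12 z + t w$ and $p_2 = \tfrac12 z - t w$ for a parameter $t > 0$ chosen large, then check that each summand lands in the intended cone.

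The verification proceeds block by block. For $p_1$, the identity $A_1 w = 0$ gives $A_1 p_1 = \tfrac12 A_1 z \ge 0$, while $A_2 w > 0$ and $A_3 w > 0$ make $A_2 p_1 = \tfrac12 A_2 z + t A_2 w \ge 0$ and $A_3 p_1 = \tfrac12 A_3 z + t A_3 w \ge 0$ for large $t$, so $p_1 \in \mathcal{P}_1$. Symmetrically, $A_1 p_2 = \tfrac12 A_1 z \ge 0$; the sign $A_2 w > 0$ now forces $A_2 p_2 = \tfrac12 A_2 z - t A_2 w \le 0$, and $A_4 w < 0$ forces $A_4 p_2 = \tfrac12 A_4 z - t A_4 w \ge 0$, both for large $t$, so $p_2 \in \mathcal{P}_2$. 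Since there are only finitely many rows, taking $t$ larger than all the individual thresholds makes every inequality hold at once, giving $z = p_1 + p_2 \in \mathcal{P}_1 + \mathcal{P}_2$.

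The argument is essentially routine, and the only point requiring care is bookkeeping of signs: the shared block $A_2$ must be pushed up for $p_1$ and down for $p_2$, which is precisely what the single condition $A_2 w > 0$ achieves in both directions, while $A_1 w = 0$ leaves the common constraint untouched and the private blocks $A_3$ (only in $\mathcal{P}_1$) and $A_4$ (only in $\mathcal{P}_2$) are each driven nonnegative by $A_3 w > 0$ and $A_4 w < 0$ respectively. I expect no genuine obstacle beyond confirming that the strict inequalities defining $S$ match these sign conventions exactly; this is the generalization of the fact that a full-dimensional cone $\mathcal{P}$ satisfies $\mathcal{P} + (-\mathcal{P}) = \mathbb{R}^N$.
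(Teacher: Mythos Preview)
Your proof is correct and follows essentially the same approach as the paper: both pick a vector in $S$ and use it as a splitting direction, writing $z$ as the sum of $\tfrac12 z + t w$ and $\tfrac12 z - t w$ (the paper equivalently shows $z \pm \lambda v^* \in \mathcal{P}_i$ and then halves, using that the $\mathcal{P}_i$ are cones). The sign bookkeeping you describe is exactly what the paper does.
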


\begin{proof}
Let $z \in \mathcal{P}_1 + \mathcal{P}_2$. Then $z=v+u$ for some $v \in \mathcal{P}_1$, and $u \in \mathcal{P}_2$. Hence 
    $A_1z= A_1(v+u) \ge 0$,
so $\mathcal{P}_1 + \mathcal{P}_2 \subseteq \{z : A_1z \ge 0\}$. 
For the reverse containment, take $z$ such that $A_1z \ge 0$. Fix $v^* \in S$. 
There exists some $\lambda_1 \in \mathbb{R}_{\ge 0}$ such that for all $\lambda > \lambda_1$, we have $A_2(z + \lambda v^*) \ge 0$ and $A_3(z + \lambda v^*) \ge 0$.
Then $z+\lambda v^* \in \mathcal{P}_1$, since
\[
    A_1(z+\lambda v^*) = A_1z+\lambda A_1v^* = A_1z \ge 0.
\]
Moreover, there exists some $\lambda_2 \in \mathbb{R}_{\geq 0}$ such that for all $\lambda > \lambda_2$, we have that $z - \lambda v^*$ satisfies 
\[
    A_1(z-\lambda v^*) \ge 0, A_2(z - \lambda v^*) \le 0 \text{ and } A_4(z-\lambda v^*) \ge 0, 
\]
so $z-\lambda v \in P_2$. 
Taking $\lambda > \max ( \lambda_1, \lambda_2)$, we can write $z = \frac{1}{2}(z + \lambda v^*) + \frac{1}{2}(z - \lambda v^*)$ to express $z$ as an element of $\mathcal{P}_1 + \mathcal{P}_2$.
\end{proof}

\begin{figure}
    \centering
\begin{tikzpicture}
\tikzset{bullet/.style={circle,fill,inner sep=1pt}}
\draw[fill=gray!20!white] 
(-2.5,-2) -- (0,0) -- (-2.5,2) 
(2.5,-2) -- (0,0) -- (2.5,2); 
\node at (-3,2) {$-P$};
\node at (3,2) {$P$};

\draw[-, dashed,green!50!black] (-.5,-.2)node[bullet]{} node[left]{\small$-x$} -- (.5,.2) node[bullet]{} node[right]{\small$x$}; 

\draw[-, dashed, blue] (-1.5,0)node[bullet]{} node[left]{\small$z-tx$} -- (1.5,1) node[bullet]{} node[right]{\small$z+tx$} 
(-1.5,0) -- (1.5,1) node[midway,bullet]{} node[midway,above]{$z$}; 
\end{tikzpicture}
    \caption{Proof of Proposition \ref{prop:msum}. } 
    \label{fig:proof}
\end{figure}
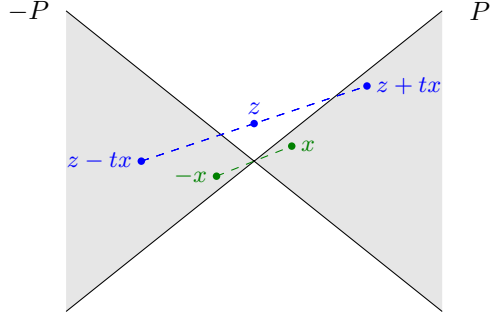

\subsection{Facet Inequalities for Sums of Supermodular Cones} 
\label{app:facet-inequalities-sums-supermodular-cones}

\begin{example}[Sums of two four-bit supermodular cones] 
\label{eg:rmb42}
We saw in Example \ref{eg:m42} that there are eight possible $\pi$-supermodular cones $\mathcal{L}_\pi$, each defined by $\binom{4}{2}2^{4-2} = 24$ elementary imset inequalities. 
Here we study the facet inequalities for their Minkowski sums. The inequalities defining the Minkowski sum $\mathcal{L}_{(1,1,1,1)} + \mathcal{L}_{(-1,1,1,1)}$ are
\[ 
A^{(23)}f \geq 0, \quad A^{(24)}f \geq 0 , \quad A^{(34)}f \geq 0 ,
\] 
as can be computed using polymake~\cite{assarf2014computing}. 
That is, the facet defining inequalities of the Minkowski sum are those that hold on both individual cones, and no others. 
We similarly compute the inequalities that define the Minkowski sum $\mathcal{L}_{(1,1,1,1)} + \mathcal{L}_{(-1,-1,1,1)}$. 
We obtain
\[ A^{(12)}f \geq 0, \quad A^{(34)}f \geq 0.
\] 
Again, the Minkowski sum is described by just the inequalities present in both cones individually. 
Notice that $\mathcal{L}_{(1,1,1,1)} + \mathcal{L}_{(-1,1,1,1)}$ is defined by $12$ inequalities while 
$\mathcal{L}_{(1,1,1,1)} + \mathcal{L}_{(-1,-1,1,1)}$ is defined by eight.
\end{example}

We show that the assumptions of Proposition~\ref{prop:sum} hold for sums of supermodular cones. 

\begin{defn} 
\label{def:xi-cone}
    Given $\xi \in \{-1,0,1\}^{\binom{n}{2}}$, define  $\mathcal{L}_{\xi}$ to be
    \[
        \mathcal{L}_{\xi} = \{ x \colon \xi_{ij} A^{(ij)} x \ge 0, \quad \text{for all } i \neq j \}, 
    \]
    where $A^{(ij)}$ is the $(i,j)$ elementary imset matrix from Definition~\ref{def:facets}. 
    Similarly, define 
    \[
        \mathcal{L}_{\xi}' =  \mathcal{L}_{\xi}  \cap \{ x \colon  A^{(ij)} x  = 0 \, \text{ for all } i \neq j \text{ with } \, \xi_{ij} = 0 \}, 
    \]
\end{defn}

The cones $\mathcal{L}_{\xi}$ and $\mathcal{L}_{\xi}'$ are $\pi$-supermodular cones in the special cases that $\xi_{ij}  = \tau_i\tau_j$ for some $\tau \in \{ \pm 1\}^n$.

\begin{lemma} 
\label{lem:1sign} 
Fix $\xi \in \{-1,0,1\}^{\binom{n}{2}}$. There exists $z \in \mathcal{L}_{\xi}'$ with $\xi_{ij}A^{(ij)}z > 0$ for all $\xi_{ij} \neq 0$. 
\end{lemma}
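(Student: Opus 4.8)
The plan is to exhibit an explicit witness $z$ built from a single homogeneous quadratic polynomial, rather than argue about the geometry of the cone abstractly. Identifying each subset $S \subseteq [n]$ with its indicator vector in $\{0,1\}^n$, I would set
\[
    z(x) = \sum_{1 \le k < l \le n} \xi_{kl}\, x_k x_l,
\]
and then verify that this one function simultaneously satisfies every required constraint. The guiding idea is that the coefficient of each degree-two monomial can be chosen freely, and it turns out to govern exactly the $(k,l)$ imset value of $z$, uniformly across the base set.

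The key computation is to evaluate $A^{(ij)} z$. By Definition~\ref{def:facets}, the row of $A^{(ij)}$ indexed by a set $S \subseteq [n]\setminus\{i,j\}$ returns the mixed second difference
\[
    (A^{(ij)} z)_S = z(S) + z(S \cup \{i,j\}) - z(S \cup \{i\}) - z(S \cup \{j\}).
\]
I would then observe that this operator annihilates every monomial of $z$ except $x_i x_j$: any monomial not containing both $x_i$ and $x_j$ is affine in $(x_i, x_j)$ on the four points $S,\, S\cup\{i\},\, S\cup\{j\},\, S\cup\{i,j\}$, and the alternating sum above vanishes on affine functions, whereas $x_i x_j$ itself contributes $1$. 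Consequently $(A^{(ij)} z)_S = \xi_{ij}$ for every $S$, \emph{independent} of the base set.

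Given this identity, the conclusion is immediate. For a pair with $\xi_{ij} \neq 0$, every entry of $\xi_{ij} A^{(ij)} z$ equals $\xi_{ij}^2 = 1 > 0$, which is the asserted strict positivity and in particular certifies the inequality constraints $\xi_{ij} A^{(ij)} z \ge 0$ of $\mathcal{L}_\xi$. For a pair with $\xi_{ij} = 0$, every entry of $A^{(ij)} z$ equals $\xi_{ij} = 0$, so $A^{(ij)} z = 0$, which places $z$ in the equality subspace cutting $\mathcal{L}_\xi'$ out of $\mathcal{L}_\xi$. Hence $z \in \mathcal{L}_\xi'$ and $\xi_{ij} A^{(ij)} z > 0$ for all $\xi_{ij} \neq 0$, as claimed.

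The only genuine obstacle is spotting the construction: once one commits to a purely quadratic $z$, the mixed second difference becomes constant across all rows of $A^{(ij)}$, which is precisely the uniform strict sign that the lemma demands. A recursive construction that builds the witness one coordinate pair at a time would also work but requires tracking how each new term perturbs the already-fixed differences; the explicit quadratic bypasses all such bookkeeping.
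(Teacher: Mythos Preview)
Your proof is correct and is essentially the same construction as the paper's. The paper builds, for each pair $\{i,j\}$ with $\xi_{ij}\neq 0$, a function $z^{(ij)}$ depending only on coordinates $i,j$ via values $c_{\ell_i\ell_j}$ chosen so that $\xi_{ij}(c_{00}+c_{11}-c_{01}-c_{10})>0$, and then sums these; your choice $z^{(ij)}(x)=\xi_{ij}x_ix_j$ is precisely the instance $c_{00}=c_{01}=c_{10}=0$, $c_{11}=\xi_{ij}$, packaged as a single quadratic polynomial.
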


\begin{proof}
First assume just one entry $\xi_{ij}$ of $\xi$ is non-zero. Let $z^{(ij)} \in \mathbb{R}^{\{0,1\}^{n}}$ have entries $z_\ell^{(ij)} = c_{\ell_i \ell_j} 
$, for $\ell = (\ell_1,\ldots, \ell_n)\in\{0,1\}^n$. 
Then all rows of $\xi_{ij} A^{(ij)} z^{(ij)}$ equal $\xi_{ij} ( c_{00} + c_{11} - c_{01} - c_{10})$.
We choose the four entries $c_{00}, c_{01}, c_{10}, c_{11}$ so that $\xi_{ij} ( c_{00} + c_{11} - c_{01} - c_{10}) > 0$. Moreover, $A^{(i'j')}z^{(ij)}$ is zero for all other $\{ i',j'\}$, since the value of $z^{(ij)}_\ell$ only depends on $\ell_i,\ell_j$. 
We conclude by setting $z = \sum z^{(ij)}$, where the sum is over $(i,j)$ with $\xi_{ij} \neq 0$. 
\end{proof}

\begin{prop} 
\label{prop:sum-xi} 
Fix $\xi^{(1)}, \xi^{(2)} \in \{-1,0,1\}^{\binom{n}{2}}$.
The Minkowski sum $\mathcal{L}_{\xi^{(1)}}+ \mathcal{L}_{\xi^{(2)}}$ is cut out by the inequalities common to both summands. That is, $\mathcal{L}_{\xi^{(1)}}+ \mathcal{L}_{\xi^{(2)}} = \mathcal{L}_{\xi}$, where
\[ \xi_{ij} = \begin{cases} \xi^{(1)}_{ij} & \xi^{(1)}_{ij} = \xi^{(2)}_{ij} \\ 
0 & \text{otherwise}. \end{cases} \]
\end{prop}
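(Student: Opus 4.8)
The plan is to deduce the equality directly from Proposition~\ref{prop:sum} by bundling the $\binom{n}{2}$ elementary imset blocks $A^{(ij)}$ according to how $\xi^{(1)}_{ij}$ and $\xi^{(2)}_{ij}$ compare. First I would partition the index pairs $\{i,j\}$ into five types: (a) both signs equal and nonzero; (b) both nonzero but opposite; (c) $\xi^{(1)}_{ij}\neq 0=\xi^{(2)}_{ij}$; (d) $\xi^{(1)}_{ij}=0\neq\xi^{(2)}_{ij}$; and (e) both zero. Stacking the signed blocks $\xi^{(1)}_{ij}A^{(ij)}$ over types (a), (b), (c) respectively into $A_1,A_2,A_3$, and the blocks $\xi^{(2)}_{ij}A^{(ij)}$ over type (d) into $A_4$, I get $\mathcal{L}_{\xi^{(1)}}=\{x:A_1x\ge 0,\,A_2x\ge 0,\,A_3x\ge 0\}$ and $\mathcal{L}_{\xi^{(2)}}=\{x:A_1x\ge 0,\,A_2x\le 0,\,A_4x\ge 0\}$. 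Indeed, on type (a) both cones impose the same inequality; on type (b) the opposite signs flip the direction, so the second cone reads $A_2x\le 0$; type (c) constrains only the first cone, type (d) only the second, and type (e) imposes nothing. This is exactly the template of Proposition~\ref{prop:sum}, and its candidate facet cone $\{x:A_1x\ge 0\}$ equals $\mathcal{L}_\xi$, since $\xi_{ij}$ retains a nonzero sign only on the agreeing pairs of type (a) and vanishes everywhere else.

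It then remains to verify the nonemptiness hypothesis, i.e.\ to exhibit $z$ with $A_1z=0$, $A_2z>0$, $A_3z>0$, and $A_4z<0$. I would produce such a $z$ from Lemma~\ref{lem:1sign} applied to a single auxiliary sign vector $\zeta\in\{-1,0,1\}^{\binom{n}{2}}$ that encodes all four requirements simultaneously: set $\zeta_{ij}=0$ on types (a) and (e), $\zeta_{ij}=\xi^{(1)}_{ij}$ on types (b) and (c), and $\zeta_{ij}=-\xi^{(2)}_{ij}$ on type (d). Lemma~\ref{lem:1sign} then yields $z\in\mathcal{L}'_\zeta$ with $\zeta_{ij}A^{(ij)}z>0$ for every $\zeta_{ij}\neq 0$. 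Unwinding the definition of $\mathcal{L}'_\zeta$, the vanishing condition forces $A^{(ij)}z=0$ on the type-(a) pairs, so $A_1z=0$; the strict inequalities give $\xi^{(1)}_{ij}A^{(ij)}z>0$ on types (b),(c), hence $A_2z>0$ and $A_3z>0$; and $-\xi^{(2)}_{ij}A^{(ij)}z>0$ on type (d), hence $A_4z<0$. Thus the set $S$ of Proposition~\ref{prop:sum} is nonempty, and that proposition delivers $\mathcal{L}_{\xi^{(1)}}+\mathcal{L}_{\xi^{(2)}}=\{x:A_1x\ge 0\}=\mathcal{L}_\xi$.

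The only real work lies in the sign bookkeeping of the two preceding steps, and I expect the main obstacle to be checking, type by type, that the reduction to $A_2x\le 0$ for the second cone and the strict condition $A_4z<0$ come out with the correct orientation after the sign flips, and that the single vector $\zeta$ as defined does meet all four constraints at once. The decoupling that makes the certificate construction go through — that each block $A^{(ij)}$ can be driven to a prescribed sign independently of the others via the gadget vectors $z^{(ij)}$ whose entries depend only on coordinates $i,j$ — is already supplied by Lemma~\ref{lem:1sign}, so no new geometric input is needed beyond these two structural results.
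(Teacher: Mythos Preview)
Your proposal is correct and follows essentially the same approach as the paper's proof: the auxiliary sign vector you call $\zeta$ is exactly the paper's $\tilde\xi$, and both arguments invoke Lemma~\ref{lem:1sign} on that vector to produce the certificate point and then apply Proposition~\ref{prop:sum}. Your write-up is in fact more explicit than the paper's about how the four blocks $A_1,\dots,A_4$ are assembled and why the sign flips line up, which is helpful.
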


\begin{proof}
Define $\tilde \xi$ by 
\[
    \tilde{\xi}_{ij} = \begin{cases} 0,  & \xi^{(1)}_{ij} \cdot \xi^{(2)}_{ij} = 1 \\
    \xi^{(1)}_{ij} & 
    \xi^{(1)}_{ij} \cdot \xi^{(2)}_{ij} = -1 \\
    \xi^{(1)}_{ij} & \xi^{(1)}_{ij} \neq 0, \xi^{(2)}_{ij} = 0 \\
    -\xi^{(2)}_{ij} & \xi^{(2)}_{ij} \neq 0, \xi^{(1)}_{ij} = 0. 
    \end{cases}
\]
There exists $v$ in the interior of $\mathcal{L}_{\tilde{\xi}}'$, by Lemma~\ref{lem:1sign}.
This $v$ satisfies the assumption from Proposition~\ref{prop:sum} for the polyhedral cones $\mathcal{P}_1 = \mathcal{L}_{\xi^{(1)}}$ and $\mathcal{P}_2 = \mathcal{L}_{\xi^{(2)}}$. 
The four cases in the definition of $\tilde \xi$ are the four cases $A_1 v = 0, A_2 v > 0, A_3 v > 0, A_4v < 0$ in set $S$ of Proposition~\ref{prop:sum}. 
\end{proof}

We can now show Theorem~\ref{cor:facets-minkowski-sums}:  

\facetsminkowskisums* 

\begin{proof}
Let $\tau^{(s)}$ be the sign vector of $\pi^{(s)}$. By Proposition~\ref{prop:sum-xi}, the Minkowski sum in the statement is $\mathcal{L}_\xi$, where 
\[ \xi_{ij} = 
\begin{cases} \xi^{(1)}_{ij} & \tau_i^{(1)} \tau_j^{(1)} = \cdots = \tau_i^{(m)} \tau_j^{(m)} \\ 
0 & \text{otherwise}. 
\end{cases} 
\]
\end{proof}

\subsection{Maximum Supermodular Rank}

We now prove Theorem~\ref{thm:rank}.

\theoremrank* 

\begin{proof}
The maximum supermodular rank is the minimal $m$ such that a union of cones of the form $\mathcal{L}_{\pi^{(1)}} + \cdots + \mathcal{L}_{\pi^{(m)}}$ fills the space of functions $f: \{0, 1\}^n \to \mathbb{R}$. 
Let $\tau^{(k)}$ be the sign vector of partial order $\pi^{(k)}$. 
We show that the maximum supermodular rank is the smallest $m$ such that there exist partial orders $\pi^{(1)}, \ldots \pi^{(m)}$ with no pair $i \neq j$ having the same value of the product of signs $\tau_i^{(k)} \cdot \tau_j^{(k)}$ for all $k = 1, \ldots, m$. 
If there is no such pair $i,j$, then the Minkowski sum fills the space, by Lemma~\ref{lem:facets} and Theorem~\ref{cor:facets-minkowski-sums}.
Conversely, assume that $\xi_{ij}:= \tau_i^{(k)}\cdot\tau_j^{(k)}$ is the same for all $k$, for some $i \neq j$.
Let the other entries of $\xi$ be zero. Then the Minkowski sum $\mathcal{L}_{\pi^{(1)}} + \cdots + \mathcal{L}_{\pi^{(m)}}$ is contained in $\mathcal{L}_\xi$, by Theorem~\ref{cor:facets-minkowski-sums}. A union of $\mathcal{L}_\xi$ with $\xi \neq 0$ cannot equal the whole space since $\xi_{ij} A^{(ij)} f \ge 0$ from Lemma~\ref{lem:facets} imposes $2^{n-2}$ inequalities and for $n \geq 3$ there exist functions with different signs for these two or more inequalities, since each inequality involves distinct indices. 
It therefore remains to study the sign vector problem.

\begin{figure}
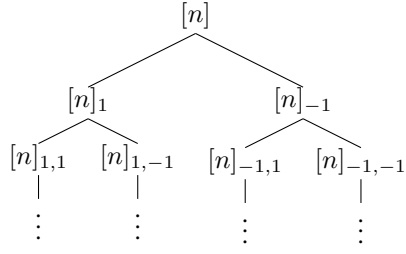

    \Tree[.$[n]$ [.$[n]_1$ [.$[n]_{1,1}$ $\vdots$ ]
              [.$[n]_{1,-1}$ $\vdots$ ]]
          [.$[n]_{-1}$ [.$[n]_{-1,1}$ $\vdots$ ]
                [.$[n]_{-1,-1}$ $\vdots$ ]]]
    \caption{Illustration of the proof of Theorem~\ref{thm:rank}.}
    \label{fig:proof-max-supermodular-rank} 
\end{figure}

Without loss of generality, let $\tau^{(1)} = (1, \ldots, 1)$. 
Consider the partition $[n] = [n]_1 \cup [n]_{-1}$, where 
\[ \tau_i^{(2)} = \begin{cases}
1 & i \in [n]_1 \\ 
-1 & i \in [n]_{-1}.
 \end{cases} \] 
 Let $n_1 := | [n]_1|$. 
 Of the ${{n \choose 2}}$ pairs $i \neq j$ there are $n_1(n - n_1)$ with $\tau^{(2)}_i \tau^{(2)}_j = -1$. 
The quantity $n_1(n - n_1)$ is maximized when $n_1 = \frac{n}{2}$ (for $n$ even) or $n_1 = \frac12 (n \pm 1)$ (for $n$ odd). 
It remains to consider the pairs $i \neq j$ with $\tau^{(1)}_i \tau^{(1)}_j = \tau^{(2)}_i \tau^{(2)}_j$; that is, $(\tau^{(2)}_i, \tau^{(2)}_j) = (1,1)$ or $(-1, -1)$.
We have reduced the problem to two smaller problems, each with $\binom{m}{2}$ pairs $i, j$ where $m \le \frac{n+1}{2}$. We choose a partition of $[n]_{1}$ into two pieces, say $[n]_{1,1}$ and $[n]_{1,-1}$, and likewise for $[n]_{-1}$. Define $\tau^{(3)}$ to be $1$ on $[n]_{1,1}, [n]_{-1,1}$ and $-1$ on $[n]_{1,-1},[n]_{-1,-1}$.
Then the pairs $i \neq j$ with 
$\tau^{(1)}_i \tau^{(1)}_j = \tau^{(2)}_i \tau^{(2)}_j = \tau^{(3)}_i \tau^{(3)}_j$ are those with
$\{i, j\} \subset [n]_{ab}$ for some $a,b \in \{-1,1\}$. 
In a sum of $m$ cones, the set $[n]$ has been divided into $2^
{m-1}$ pieces. 
Hence there is one piece of size at least $\lceil \frac{n}{2^
{m-1}} \rceil$, 
by the Pigeonhole principle. This is at least two for $m \leq \lceil \log_2 n\rceil$. 
Conversely, 
choosing a splitting into two pieces of size as close as possible
shows that for $m \geq \lceil \log_2 n\rceil + 1$ the set $[n]$ can be divided into pieces of size $1$.
\end{proof}

\begin{prop}[Supermodular rank of submodular functions] 
\label{prop:supermodular-rank-submodular}
A strictly submodular function $f \colon 2^{[n]} \to \mathbb{R}$ has supermodular rank  $\lceil \log_2 n \rceil$. 
\end{prop}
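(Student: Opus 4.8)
The plan is to prove Proposition~\ref{prop:supermodular-rank-submodular} as a refinement of Theorem~\ref{thm:rank}, which already establishes that the maximum supermodular rank is $\lceil \log_2 n\rceil + 1$ and that strictly submodular functions in the interior of $-\mathcal{L}_{(1,\ldots,1)}$ achieve supermodular rank $\lceil \log_2 n\rceil$. A strictly submodular function is precisely a function $f$ with $A^{(ij)}f < 0$ for all $i \neq j$, i.e.\ a point in the interior of $-\mathcal{L}_{(1,\ldots,1)}$, so the proposition is essentially the ``moreover'' clause of Theorem~\ref{thm:rank} stated in isolation. I would therefore structure the proof as two matching bounds: an upper bound showing $f$ decomposes into $\lceil \log_2 n\rceil$ $\pi$-supermodular summands, and a lower bound showing it cannot be done with fewer.

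For the upper bound, I would invoke the construction from the proof of Theorem~\ref{thm:rank}. Set $m = \lceil \log_2 n\rceil$. By the pigeonhole argument there, one can choose sign vectors $\tau^{(1)},\ldots,\tau^{(m)}$ (via recursive bisection of $[n]$) such that every pair $i \neq j$ is separated by some $\tau^{(k)}$, i.e.\ no pair has $\tau_i^{(k)}\tau_j^{(k)}$ constant across all $k$. By Theorem~\ref{cor:facets-minkowski-sums} together with Lemma~\ref{lem:facets}, the Minkowski sum $\mathcal{L}_{\pi^{(1)}} + \cdots + \mathcal{L}_{\pi^{(m)}}$ then has no surviving facet inequalities and hence equals all of $\mathbb{R}^{\{0,1\}^n}$. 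In particular $f$ lies in this sum, so its supermodular rank is at most $m = \lceil \log_2 n\rceil$.

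The lower bound is where the strict submodularity hypothesis does the work, and I expect it to be the main obstacle. The claim is that no sum of $m-1 = \lceil \log_2 n\rceil - 1$ supermodular cones can contain a strictly submodular $f$. Suppose for contradiction $f \in \mathcal{L}_{\pi^{(1)}} + \cdots + \mathcal{L}_{\pi^{(m-1)}}$. With only $m-1$ sign vectors, the set $[n]$ is partitioned into at most $2^{m-2}$ blocks according to the joint sign pattern $(\tau_i^{(1)},\ldots,\tau_i^{(m-1)})$; since $2^{m-2} < n$ for $m-1 < \lceil \log_2 n\rceil$, by pigeonhole some block contains a pair $i \neq j$ with $\tau_i^{(k)}\tau_j^{(k)}$ equal for all $k$. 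Call this common value $\xi_{ij} \in \{\pm 1\}$. By Theorem~\ref{cor:facets-minkowski-sums}, the Minkowski sum then satisfies $\xi_{ij} A^{(ij)} g \geq 0$ for every $g$ in it, in particular for $f$. But strict submodularity gives $A^{(ij)}f < 0$ componentwise, which forces $\xi_{ij} = -1$ and is consistent only if the common sign product is $-1$ everywhere on these inequalities.

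Here I must be careful: a single surviving block of inequalities $\xi_{ij}A^{(ij)}f \geq 0$ with $\xi_{ij} = -1$ is in fact compatible with $A^{(ij)}f < 0$, so the naive pigeonhole does not immediately yield a contradiction. The resolution is to use that \emph{every} pair trapped in a common block inherits the \emph{same} sign constraint relative to $\tau^{(1)} = (1,\ldots,1)$. I would fix $\tau^{(1)}$ to be the all-ones vector without loss of generality; then for any pair $i,j$ lying in a common block across $\tau^{(2)},\ldots,\tau^{(m-1)}$, the product $\tau_i^{(1)}\tau_j^{(1)} = +1$ is among the constant values only if $\xi_{ij} = +1$, giving the surviving inequality $A^{(ij)}f \geq 0$, which directly contradicts $A^{(ij)}f < 0$. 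To guarantee such a pair exists, I partition $[n]$ by the joint sign pattern over $\tau^{(2)},\ldots,\tau^{(m-1)}$ (that is $m-2$ vectors, yielding at most $2^{m-2}$ blocks) and note a repeated pattern produces a pair with $\tau_i^{(k)} = \tau_j^{(k)}$ for all $k \geq 2$ and trivially for $k=1$, so $\xi_{ij} = +1$ throughout. Since $2^{m-2} < n$ exactly when $m < \lceil\log_2 n\rceil + 1$, i.e.\ $m - 1 < \lceil \log_2 n\rceil$, the pigeonhole applies and the contradiction is reached. Combining the two bounds gives supermodular rank exactly $\lceil \log_2 n\rceil$.
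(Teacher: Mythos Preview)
Your upper bound argument contains a genuine error. You claim that $m = \lceil \log_2 n\rceil$ sign vectors can be chosen so that ``no pair has $\tau_i^{(k)}\tau_j^{(k)}$ constant across all $k$,'' making the Minkowski sum all of $\mathbb{R}^{\{0,1\}^n}$. This directly contradicts Theorem~\ref{thm:rank}, which says one needs $\lceil \log_2 n\rceil + 1$ cones to fill the space. The obstruction is that ``constant'' includes constantly $-1$: with $m$ sign vectors the column vectors $v_i = (\tau_i^{(1)},\ldots,\tau_i^{(m)})$ live in $\{\pm 1\}^m$, but a pair with $v_i = -v_j$ has product constantly $-1$, and there are only $2^{m-1}$ classes in $\{\pm 1\}^m/\{\pm 1\}$, which is strictly less than $n$ when $m = \lceil \log_2 n\rceil$. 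The correct upper bound (as in the paper) is weaker but sufficient: choose the $m$ sign vectors so that the $v_i$ are all \emph{distinct} (possible since $2^m \geq n$). Then no pair has product constantly $+1$, so the surviving $\xi_{ij}$ all lie in $\{-1,0\}$, and the resulting $\mathcal{L}_\xi$ contains the submodular cone $-\mathcal{L}_{(1,\ldots,1)}$ (since any $f$ with $A^{(ij)}f \le 0$ satisfies $\xi_{ij}A^{(ij)}f \ge 0$ whenever $\xi_{ij}\le 0$). The sum need not, and does not, fill the whole space.

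Your lower bound idea is sound but the ``without loss of generality $\tau^{(1)} = (1,\ldots,1)$'' step is not justified: the symmetry that maps an arbitrary $\mathcal{L}_{\tau^{(1)}}$ to $\mathcal{L}_{(1,\ldots,1)}$ does not fix the given strictly submodular $f$. Fortunately the WLOG is unnecessary. Partition $[n]$ directly by the full joint pattern $(\tau_i^{(1)},\ldots,\tau_i^{(m-1)})$ over all $m-1$ vectors; this gives at most $2^{m-1} = 2^{\lceil \log_2 n\rceil - 1} < n$ blocks, so pigeonhole yields a pair $i,j$ with $v_i = v_j$, hence $\tau_i^{(k)}\tau_j^{(k)} = +1$ for every $k$, hence $\xi_{ij} = +1$ survives and forces $A^{(ij)}f \ge 0$, contradicting strict submodularity. (The paper's own lower bound is instead indirect: it first deduces from Theorem~\ref{thm:rank} that \emph{some} submodular function has rank $\ge \lceil\log_2 n\rceil$, then argues that if one interior point had smaller rank, the containing $\mathcal{L}_\xi$ would have $\xi_{ij}\in\{-1,0\}$ everywhere and hence contain the entire submodular cone. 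Your direct combinatorial route is a legitimate alternative once the WLOG is removed.)
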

\begin{proof}
    We first show that there exists a submodular function $f$ of supermodular rank at least $\lceil \log_2 n \rceil$.
    Suppose that for all $f \in - \mathcal{L}_{(1,\ldots,1)}$, the supermodular rank of $f$ was at most $\lceil \log_2 n \rceil - 1$. The sum $\mathcal{L}_{(1, \ldots, 1)} + (- \mathcal{L}_{(1, \ldots, 1)})$ is the whole space, by  Proposition~\ref{prop:msum}. Then the maximal supermodular rank would be $\lceil \log_2 n \rceil$, contradicting Theorem \ref{thm:rank}. 
    Hence there exists $f \in - \mathcal{L}_{(1,\ldots,1)}$ with supermodular rank at least $\lceil \log_2 n \rceil$.

A function $g$ in the interior of the submodular cone satisfies $A^{(ij)} g < 0$ for all $\{i,j\}$.    
    Suppose that there exists such a $g$ with supermodular rank less than $\lceil \log_2 n \rceil$. Then there exist $\pi^{(1)}, \ldots, \pi^{(m)}$ for $m < \lceil \log_2 n  \rceil$, such that $g \in \mathcal{L}_{\pi^{(1)}} + \cdots + \mathcal{L}_{\pi^{(m)}}$. There exists $\xi \in \{-1,0,1\}^n$ such that 
    \[
        \mathcal{L}_\xi = \mathcal{L}_{\pi^{(1)}} + \cdots + \mathcal{L}_{\pi^{(m)}},
    \]
    by Proposition \ref{prop:sum}.
    Hence $g$ satisfies inequalities 
    $
        \xi_{ij} A^{(ij)} g \ge 0
    $,
    by Definition \ref{def:xi-cone}.
Therefore $\xi_{ij} \in  \{-1,0\}$ for all $\{i,j\}$. It follows that all submodular functions are in $\mathcal{L}_\xi$, a contradiction since by the first paragraph of the proof there exist submodular $f$ of supermodular rank at least $\lceil \log_2 n  \rceil$.

It remains to show that the supermodular rank of a submodular function is at most $\lceil \log_2 n  \rceil$. 
That is, we aim to show that $\lceil \log_2 n  \rceil$ cones can be summed to give some $\mathcal{L}_\xi$ with all $\xi_{ij}$ in $\{-1, 0\}$.
In the proof of Theorem~\ref{thm:rank}, we counted the number of supermodular cones that needed to give $\mathcal{L}_\xi$ with all $\xi_{ij} = 0$. Here, starting with a partial order with some $\tau_i \tau_j = -1$, instead of all $\tau_i\tau_j = 1$, shows that we require (at least) one fewer cone than in Theorem~\ref{thm:rank}. 
\end{proof}

\subsection{Maximum Elementary Submodular Rank} 
\label{app:elementary-submodular-rank}

A function can be decomposed as a sum of elementary submodular functions. 

\begin{theorem}\label{thm:supermodular}
Let $f\colon \{0,1\}^n \to \mathbb{R}$.
Then there exist $f_0, f_{1}, \ldots, f_{n-1}$ with $f_0\in-\mathcal{L}_{(1,\ldots,1)}$ and $f_{i} \in -\mathcal{L}_{\tau^{(i)}}$ where $\tau_j^{(i)} = -1$ if and only if $j = i$, such that 
$f = f_0 + \sum_{i=1}^{n-1} f_{i}$. 
\end{theorem}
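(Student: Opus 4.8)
The plan is to recast the existence of the claimed decomposition as the statement that a Minkowski sum of $n$ cones fills all of $\mathbb{R}^{\{0,1\}^n}$, and then read off that sum using the $\mathcal{L}_\xi$ calculus of Definition~\ref{def:xi-cone} and Proposition~\ref{prop:sum-xi}. Writing $\tau^{(0)} = (1,\ldots,1)$, the assertion is exactly that
\[
(-\mathcal{L}_{\tau^{(0)}}) + (-\mathcal{L}_{\tau^{(1)}}) + \cdots + (-\mathcal{L}_{\tau^{(n-1)}}) = \mathbb{R}^{\{0,1\}^n},
\]
since membership of $f$ in the left-hand side is precisely the production of summands $f_0 \in -\mathcal{L}_{(1,\ldots,1)}$ and $f_i \in -\mathcal{L}_{\tau^{(i)}}$ with $f = f_0 + \sum_{i=1}^{n-1} f_i$.

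First I would express each summand as an $\mathcal{L}_\xi$ cone. By Lemma~\ref{lem:facets}, the cone $-\mathcal{L}_{\tau}$ equals $\{x : \tau_k\tau_l A^{(kl)} x \le 0\}$, which is $\mathcal{L}_{\xi}$ with $\xi_{kl} = -\tau_k\tau_l$. Hence $-\mathcal{L}_{\tau^{(0)}} = \mathcal{L}_{\xi^{(0)}}$ with $\xi^{(0)}_{kl} = -1$ for every pair, while for $i \in [n-1]$ the sign vector $\tau^{(i)}$ (which is $-1$ only in coordinate $i$) gives $\tau^{(i)}_k\tau^{(i)}_l = -1$ exactly when $i \in \{k,l\}$, so $-\mathcal{L}_{\tau^{(i)}} = \mathcal{L}_{\xi^{(i)}}$ with $\xi^{(i)}_{kl} = +1$ if $i \in \{k,l\}$ and $\xi^{(i)}_{kl} = -1$ otherwise. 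It is worth noting here that for $n \ge 3$ the cone $-\mathcal{L}_{(1,\ldots,1)}$ is not itself $\pi$-supermodular for any $\pi$, which is why I work with the $\xi$-cones rather than applying Theorem~\ref{cor:facets-minkowski-sums} directly.

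Next I would compute the Minkowski sum by iterating Proposition~\ref{prop:sum-xi}: summing two $\mathcal{L}_\xi$ cones keeps a coordinate's sign where the two $\xi$-vectors agree and zeroes it otherwise, and a zeroed coordinate remains zero under any further sum. Thus the $n$-fold sum is $\mathcal{L}_\xi$, where $\xi_{kl}$ equals the common value of $\xi^{(0)}_{kl},\ldots,\xi^{(n-1)}_{kl}$ if these all coincide and $\xi_{kl}=0$ otherwise. The crux is the elementary check that for every pair $\{k,l\}$ these signs are \emph{not} all equal. Indeed $\xi^{(0)}_{kl} = -1$ always, whereas taking $i = k \in \{k,l\}$ gives $\xi^{(k)}_{kl} = +1$; both a $+1$ and a $-1$ occur, so $\xi_{kl} = 0$. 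This covers pairs inside $[n-1]$ and, crucially, the pairs $\{k,n\}$ as well: there the positive sign is supplied by the cone $\mathcal{L}_{\xi^{(k)}}$ and the negative by the submodular cone $\mathcal{L}_{\xi^{(0)}}$, so no summand indexed by the coordinate $n$ is needed. Therefore $\xi \equiv 0$ and $\mathcal{L}_\xi = \mathbb{R}^{\{0,1\}^n}$, which yields the decomposition.

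I expect the one point requiring care to be the passage from the two-cone Proposition~\ref{prop:sum-xi} to the $n$-cone sum: I must check that its hypothesis (the separating point produced by Lemma~\ref{lem:1sign}) is available at each step, i.e.\ that every partial sum is again a genuine $\mathcal{L}_\xi$ cone so that the proposition can be reapplied, and that zero coordinates are absorbing under the combination rule. Everything else is the bookkeeping of the sign pattern above, which simultaneously explains why exactly $n$ cones suffice and why the elementary direction $n$ can be omitted.
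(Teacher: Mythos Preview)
Your proposal is correct and follows essentially the same route as the paper: both arguments express the negated cones as $\mathcal{L}_\xi$ cones, invoke Proposition~\ref{prop:sum-xi} to compute the Minkowski sum by intersecting the sign patterns, and verify that no pair $\{k,l\}$ retains a common sign across all $n$ summands (the paper phrases this as ``there are no facet inequalities common to all $n$ cones''). Your write-up is simply more explicit about the iteration and the role of the pair $\{k,n\}$. One small clarification: the separating point from Lemma~\ref{lem:1sign} is used \emph{inside} the proof of Proposition~\ref{prop:sum-xi}, not as a hypothesis of it, so no extra check is needed to iterate---the output of the proposition is again an $\mathcal{L}_\xi$ cone, and the proposition applies to any two such cones unconditionally.
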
 

\begin{proof}
The sign vector $-\mathcal{L}_{(1,\ldots,1)}$ is 
$\tau = (1, \ldots, 1)$. 
For all $i,j$ we have $\tau_i \tau_j = 1$.
For the cones $\mathcal{L}_{\tau^{(i)}}$, we have $\tau^{(i)}_i \tau^{(i)}_j = -1$ for all $j \neq i$. Hence there are no facet inequalities common to all $n$ cones. The result then
follows from Proposition~\ref{prop:sum-xi}. 
\end{proof}

We can now prove Theorem~\ref{prop:maximum-elementary-rank}:  

\theoremmaximumelementaryrank* 

\begin{proof}
The maximum elementary submodular rank is at most $n$, by Theorem~\ref{thm:supermodular}.
 For any $i_1, \ldots, i_r$, the cone $-\mathcal{L}_{(1, \ldots, 1)} + (- \mathcal{L}_{\tau^{(i_1)}}) +  \cdots + (- \mathcal{L}_{\tau^{(i_r)}})$, where $ - \mathcal{L}_{\tau^{(i_j)}}$ is an $(i_j)$-th elementary submodular cone, is defined by the inequalities  
    \[
        A^{(jk)} %x 
        f 
        \le 0, \quad j,k \not \in \{i_1, \ldots, i_r\} , 
    \]
see Proposition \ref{prop:sum}.
This consists of $2^{n-2} {{ n - r \choose 2}}$ inequalities. 
A union of such functions therefore cannot equal the full space of functions if $r + 1 \leq n-1$, as in the proof of Theorem~\ref{thm:rank}. 

If $f$ is in the interior of the super modular cone, then 
    \[
        A^{(ij)} %x^h 
        h 
        > 0, \quad \text{ for all } i \neq j . 
    \]
    For $f$ to be in $-\mathcal{L}_{(1, \ldots, 1)} + (- \mathcal{L}_{\tau^{(i_1)}}) +  \cdots + (- \mathcal{L}_{\tau^{(i_r)}})$, we need there to be no $\{i,j\}$ such that $\{i,j\} \cap \{i_1, \ldots, i_r \} = \emptyset$. Thus, we require $r \geq n-1$. 
\end{proof}

\subsection{Inclusion Relations of Sums of Supermodular Cones} 

We briefly discuss the structure of the sets of bounded supermodular rank. 
Some tuples of supermodular cones are closer together than others in the sense that their Minkowski sum is defined by a larger number of inequalities, as we saw in Example~\ref{eg:rmb42}. 

We consider the sums $\mathcal{L}_{\pi^{(1)}} + \mathcal{L}_{\pi^{(2)}} + \cdots + \mathcal{L}_{\pi^{(m)}}$, for different choices of $(\pi^{(1)},\ldots, \pi^{(m)})$ and $m$. 
This set can be organized in levels corresponding to the number of summands (rank) and partially ordered by inclusion. 
Each cone corresponds to a $\binom{n}{2}$-vector $\xi$ with entries indexed by $\{i, j\}$, $i \neq j$.
A sign vector $\tau$ having $s$ entries $-1$ has $\xi$ vector with $s \cdot (n - s)$ entries $-1$. 

\begin{example}[Poset of sums of three-bit supermodular cones] 
\label{eg:poset-three-bits} 
We have four supermodular cones, with $\tau =$ $(1,1,1)$, $(-1,-1,1)$, $(-1,1,-1)$, $(1,-1,-1)$. In this case, all sums of pairs and all sums of triplets of cones behave similarly, in the sense that they have the same number of $0$’s
in the vector $\xi$ indexing the Minkowski sum. This is shown in Figure~\ref{fig:eg-poset-three-bits}. 
\end{example}

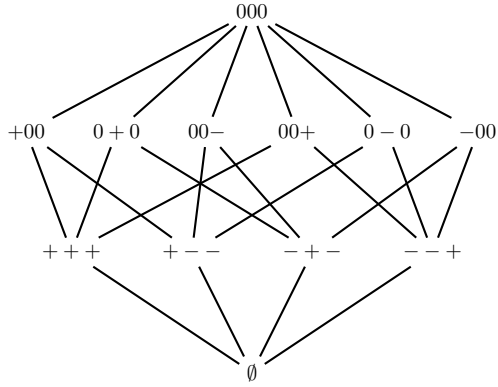
\begin{figure}[h!]
    \centering
\begin{tikzpicture}[scale=.8, every node/.style={transform shape}]
\node (A-1) at (5,0) {$\emptyset$}; 

\node (B-1) at (2 ,2) {$+++$}; 
\node (B-2) at (4 ,2) {$+--$}; 
\node (B-3) at (6 ,2) {$-+-$}; 
\node (B-4) at (8 ,2) {$--+$}; 

\node (C-1) at (1.25 ,4) {$+00$}; 
\node (C-2) at (2.75 ,4) {$0+0$}; 
\node (C-3) at (4.25 ,4) {$00-$}; 
\node (C-4) at (5.75 ,4) {$00+$}; 
\node (C-5) at (7.25 ,4) {$0-0$}; 
\node (C-6) at (8.75 ,4) {$-00$}; 

\node (E-1) at (5 ,6) {$000$};

\draw[-, line width = .8pt] (A-1) edge (B-1);
\draw[-, line width = .8pt] (A-1) edge (B-2);
\draw[-, line width = .8pt] (A-1) edge (B-3);
\draw[-, line width = .8pt] (A-1) edge (B-4);

\draw[-, line width = .8pt] (B-1) edge (C-1);
\draw[-, line width = .8pt] (B-1) edge (C-2);
\draw[-, line width = .8pt] (B-1) edge (C-4);

\draw[-, line width = .8pt] (B-2) edge (C-1);
\draw[-, line width = .8pt] (B-2) edge (C-3);
\draw[-, line width = .8pt] (B-2) edge (C-5);

\draw[-, line width = .8pt] (B-3) edge (C-2);
\draw[-, line width = .8pt] (B-3) edge (C-3);
\draw[-, line width = .8pt] (B-3) edge (C-6);

\draw[-, line width = .8pt] (B-4) edge (C-4);
\draw[-, line width = .8pt] (B-4) edge (C-5);
\draw[-, line width = .8pt] (B-4) edge (C-6);

\draw[-, line width = .8pt] (C-1) edge (E-1);
\draw[-, line width = .8pt] (C-2) edge (E-1);
\draw[-, line width = .8pt] (C-3) edge (E-1);
\draw[-, line width = .8pt] (C-4) edge (E-1);
\draw[-, line width = .8pt] (C-5) edge (E-1);
\draw[-, line width = .8pt] (C-6) edge (E-1);

\end{tikzpicture}
    \caption{The inclusion poset of Minkowski sums of three-bit supermodular cones. 
    The string in each node is the vector $\xi$ of signs of the inequalities for that supermodular cone or sum of supermodular cones. See Example~\ref{eg:poset-three-bits}. 
   }
    \label{fig:eg-poset-three-bits}
\end{figure}

\begin{example}[Poset of sums of four-bit supermodular cones] \label{eg:poset-four-bits} There are eight supermodular cones, given by the $\tau =$  $(1,1,1,1)$, $(-1,1,1,1)$, $(1,-1,1,1)$, $(1,1,-1,1)$, $(1,1,1,-1)$, $(-1,-1,1,1)$, $(-1,1,-1,1)$, $(-1,1,1,-1)$, each defining signs for the six columns $\{1,2\}$, $\{1,3\}$, $\{1,4\}$, $\{2,3\}$, $\{2,4\}$, $\{3,4\}$. In this case, we see that rank 2 nodes are not all equivalent, in the sense that they may have a different number of $0$'s (arbitrary sign). Some have three and some have four $0$’s. 
There are triplet sums that have all $0$’s, but not all do. See Figure~\ref{fig:eg-poset-four-bits}. 
\end{example}

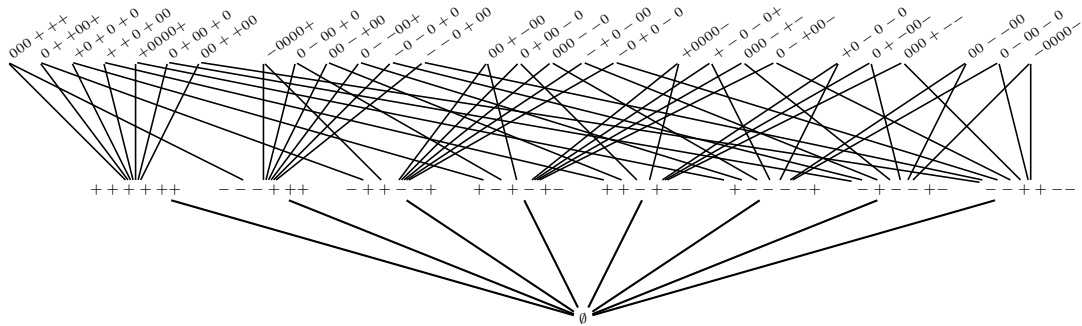
\begin{figure}[h!]
    \centering
	\begin{tikzpicture}[scale=.85, every node/.style={transform shape}, inner sep=0]
	\tiny
	\node[inner sep=2pt] (A-1) at (9,0) {$\emptyset$}; 
	
	\node[inner sep=2pt] (B-1) at (2 ,2) {$++++++$}; %0000
	\node[inner sep=2pt] (B-2) at (4 ,2) {$---+++$}; %1000
	\node[inner sep=2pt] (B-3) at (6 ,2) {$-++--+$}; %0100
	\node[inner sep=2pt] (B-4) at (8 ,2) {$+-+-+-$}; %0010
	\node[inner sep=2pt] (B-5) at (10 ,2) {$++-+--$}; %0001
	\node[inner sep=2pt] (B-6) at (12 ,2) {$+----+$}; %1100
	\node[inner sep=2pt] (B-7) at (14 ,2) {$-+--+-$}; %1010
	\node[inner sep=2pt] (B-8) at (16 ,2) {$--++--$}; %1001
	
% 0000 plus 	
	\node (C-12) [label=above:\rlap{\rotatebox{35}{$000+++$}}] at (0,4) {} ; %1000
	\node (C-13) [label=above:\rlap{\rotatebox{35}{$0++00+$}}] at (.5,4) {}; %0100
	\node (C-14) [label=above:\rlap{\rotatebox{35}{$+0+0+0$}}] at (1,4) {}; %0010
	\node (C-15) [label=above:\rlap{\rotatebox{35}{$++0+00$}}] at (1.5,4) {}; %0001
	\node (C-16) [label=above:\rlap{\rotatebox{35}{$+0000+$}}] at (2,4) {}; %1100
	\node (C-17) [label=above:\rlap{\rotatebox{35}{$0+00+0$}}] at (2.5,4) {}; %1010
	\node (C-18) [label=above:\rlap{\rotatebox{35}{$00++00$}}]at (3,4) {}; %1001	
	
% 1000 plus 
	\node (C-23) [label=above:\rlap{\rotatebox{35}{$-0000+$}}] at (4,4) {}; %0100
	\node (C-24) [label=above:\rlap{\rotatebox{35}{$0-00+0$}}] at (4.5,4) {}; %0010
	\node (C-25) [label=above:\rlap{\rotatebox{35}{$00-+00$}}] at (5,4) {}; %0001
	\node (C-26) [label=above:\rlap{\rotatebox{35}{$0--00+$}}] at (5.5,4) {}; %1100
	\node (C-27) [label=above:\rlap{\rotatebox{35}{$-0-0+0$}}] at (6,4) {}; %1010
	\node (C-28) [label=above:\rlap{\rotatebox{35}{$--0+00$}}] at (6.5,4) {}; %1001	

% 0100 plus 	
	\node (C-34) [label=above:\rlap{\rotatebox{35}{$00+-00$}}] at (7.5,4) {}; %0010
	\node (C-35) [label=above:\rlap{\rotatebox{35}{$0+00-0$}}] at (8,4) {}; %0001
	\node (C-36) [label=above:\rlap{\rotatebox{35}{$000--0$}}] at (8.5,4) {}; %1100
	\node (C-37) [label=above:\rlap{\rotatebox{35}{$-+0-00$}}] at (9,4) {}; %1010
	\node (C-38) [label=above:\rlap{\rotatebox{35}{$-0+0-0$}}] at (9.5,4) {}; %1001	

% 0010 plus 	
	\node (C-45) [label=above:\rlap{\rotatebox{35}{$+0000-$}}] at (10.5,4) {}; %0001
	\node (C-46) [label=above:\rlap{\rotatebox{35}{$+-0-0+$}}] at (11,4) {}; %1100
	\node (C-47) [label=above:\rlap{\rotatebox{35}{$000-+-$}}] at (11.5,4) {}; %1010
	\node (C-48) [label=above:\rlap{\rotatebox{35}{$0-+00-$}}] at (12,4) {}; %1001	

% 0001 plus 	
	\node (C-56) [label=above:\rlap{\rotatebox{35}{$+0-0-0$}}] at (13,4) {}; %1100
	\node (C-57) [label=above:\rlap{\rotatebox{35}{$0+-00-$}}] at (13.5,4) {}; %1010
	\node (C-58) [label=above:\rlap{\rotatebox{35}{$000+--$}}] at (14,4) {}; %1001	

% 1100 plus 	
	\node (C-67) [label=above:\rlap{\rotatebox{35}{$00--00$}}] at (15,4) {}; %1010
	\node (C-68) [label=above:\rlap{\rotatebox{35}{$0-00-0$}}] at (15.5,4) {}; %1001	

% 1010 plus 	
	\node (C-78) [label=above:\rlap{\rotatebox{35}{$-0000-$}}] at (16,4) {}; %1001	
	
	\foreach \number in {1,...,8}{
	\draw[-, line width = .8pt] (A-1) edge (B-\number);
}

\foreach \x in {1,2,3,4,5,6,7}{ 
	\foreach[evaluate={\y=int(\t+1)}] \t in {\x,...,7}{
	\draw[-, line width = .6pt,opacity=1] (B-\x) edge (C-\x\y);
	\draw[-, line width = .6pt,opacity=1] (B-\y) edge (C-\x\y);
}}
	
	\end{tikzpicture}
    \caption{Inclusion poset of Minkowski sums of four-bit supermodular cones. 
    There are $8$ supermodular cones and $28$ distinct Minkowski sums of pairs of supermodular cones. See Example~\ref{eg:poset-four-bits}. 
}
    \label{fig:eg-poset-four-bits}
\end{figure}

\section{Details on Computing Low Rank Approximations} 
\label{app:details-computing-decomposition}

The problem we are interested in solving is as follows. Given a target set function $f$, 
and given partial orders $\pi^{(1)}, \ldots, \pi^{(m)}$ we want to minimize $\|  f - g\|_2$ over $g \in \mathcal{L}_{\pi^{(1)}} + \cdots + \mathcal{L}_{\pi^{(m)}}$. 
While there are many algorithmic techniques that could solve this problem, we use \textsc{Project and Forget} due to \citet{JMLR:v23:20-1424}, which is designed to solve highly constrained convex optimization problems and has been shown to be capable of solving problems with up to $10^{15}$ constraints. 
We first present a discussion of the method, adapted from \citet{JMLR:v23:20-1424}. 
Then we describe how to apply it for our particular problem. 

\subsection{Project and Forget}
\label{sec:project}

\textsc{Project and Forget} is a conversion of Bregman's cyclic method into an active set method to solve metric constrained problems~(\citet{Brickell2008TheMN, Fan2018GeneralizedMR, Gilbert2018UnsupervisedML}). 
It is an iterative method with three major steps per iteration: (i) (\textsc{Oracle}) an (efficient) oracle to find violated constraints, (ii) (\textsc{Project})  Bregman projection onto the hyperplanes defined by each of the active constraints, and (iii) (\textsc{Forget})  the forgetting of constraints that no longer require attention. 
The main iteration with the above three steps is given in Algorithms~\ref{alg:bregman}. The \textsc{Project} and \textsc{Forget} functions are presented in Algorithm \ref{alg:pf1}. 
To describe the details and guarantees we need a few definitions. 

\begin{defn} \label{defn:bdist} Given a convex function $f(x): S \to \R$ whose gradient is defined on $S$, we define its \emph{generalized Bregman distance} $D_f : S \times S \to \R$ as $D_f(x,y) = f(x) - f(y) - \langle\nabla f(y), x-y \rangle. $
\end{defn} 

\begin{defn} \label{defn:bfunction} 
A function $f: \Lambda \to \mathbb{R}$ is called a Bregman function if there exists a non-empty convex set $S$ such that $\overline{S} \subset \Lambda$ and the following hold: 
\begin{enumerate}[label = (\roman*), topsep=0pt,itemsep=-1ex,partopsep=1ex,parsep=1ex]
\item $f(x)$ is continuous, strictly convex on $\overline{S}$, and has continuous partial derivatives in $S$.
\item For every $\alpha \in \mathbb{R}$, the partial level sets $\displaystyle L_1^f(y, \alpha) := \{ x \in \overline{S} : D_f(x,y) \le \alpha \} $ and $\displaystyle L_2^f(x, \alpha) := \{ y \in S : D_f(x,y) \le \alpha \} $ are bounded for all $x \in \overline{S}, y \in S$. 
\item If $y_n \in S$ and $\displaystyle \lim_{n \to \infty} y_n = y^*$, then $\displaystyle \lim_{n \to \infty} D_f(y^*,y_n) = 0$.
\item If $y_n \in S$, $x_n \in \overline{S}$, $\displaystyle \lim_{n \to \infty} D_f(x_n,y_n) = 0$,  $y_n \to y^*$, and $x_n$ is bounded, then $x_n \to y^*$.
\end{enumerate} 
We denote the family of Bregman functions by $\mathcal{B}(S)$. We refer to $S$ as the zone of the function and we take the closure of the $S$ to be the domain of $f$. Here $\overline{S}$ is the closure of $S$. 
\end{defn} 

This class of function includes many natural objective functions, including entropy  $f(x) = -\sum_{i=1}^n x_i \log(x_i)$ with zone $S = \mathbb{R}^n_{+}$ (here $f$ is defined on the boundary of $S$ by taking the limit) and $f(x) = \frac{1}{p} \|x\|_p^p$ for $p \in (1, \infty)$. The $\ell_p$ norms for $p=1,\infty$ are not Bregman functions but can be made Bregman functions by adding a quadratic term. That is, $f(x) = c^Tx$ is a not Bregman function, but $c^Tx + x^TQx$ for any positive definite $Q$ is a Bregman function. 

\begin{defn} We say that a hyperplane $H_i$ is \emph{strongly zone consistent} with respect to a Bregman function $f$ and its zone $S$, if for all $y\in S$ and for all hyperplanes $H$, parallel to $H_i$ that lie in between $y$ and $H_i$, the Bregman projection of $y$ onto $H$ lies in $S$ instead of in $\overline{S}$. 
\end{defn} 

\begin{theorem}[\citet{JMLR:v23:20-1424}] \label{thm:project}
If $f \in \mathcal{B}(S)$, $H_i$ are strongly zone consistent with respect to $f$, and $\exists\, x^0 \in S$ such that $\nabla f(x^0) = 0$, then 
\begin{enumerate} 
\item \label{part:1prime} If the oracle returns each violated constraint with a positive probability, then any sequence $x^n$ produced by the above algorithm converges (with probability $1$) to the optimal solution. 
\item  \label{part:3prime} If $x^*$ is the optimal solution, $f$ is twice differentiable at $x^*$, and the Hessian $H := H f(x^*)$ is positive definite, then there exists $\rho \in (0,1)$ such that  
\begin{equation} \label{eq:conv}
\lim_{\nu \to \infty } \frac{\|x^* - x^{\nu+1} \|_H}{\|x^* - x^\nu \|_H} \le \rho  
\end{equation} 
where $\|y\|_H ^2 = y^THy$. The limit in \eqref{eq:conv} holds with probability $1$. 
\end{enumerate} 
\end{theorem}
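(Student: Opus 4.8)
The plan is to treat this as a convergence result for a randomized Bregman cyclic-projection scheme with an active-set/forgetting mechanism, following the classical theory of Bregman projections. The two parts are handled separately: Part 1 is almost-sure convergence to the constrained minimizer, and Part 2 is an asymptotic linear rate once the iterates enter a neighborhood of the optimum in which $f$ is well-approximated by the quadratic form $\|\cdot\|_H^2$. By strict convexity of $f$ on $\overline{S}$ (condition (i) of Definition~\ref{defn:bfunction}), the constrained minimizer $x^*$ is unique, which is what makes cluster-point arguments promotable to full-sequence convergence.

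For Part 1, the central object is the Bregman distance $D_f(x^*, x^n)$ from $x^*$ to the current iterate. First I would record the three-point identity for $D_f$ and the generalized Pythagorean inequality for a Bregman projection onto an active hyperplane $H_i$: projecting $x^n$ onto a hyperplane that currently separates it from the feasible region strictly decreases $D_f(x^*, \cdot)$. Strong zone consistency guarantees each projection lands in the zone $S$, so $\nabla f$ is defined there and the update is well-posed. This yields Fej\'er-type monotonicity $D_f(x^*, x^{n+1}) \le D_f(x^*, x^n)$, so the scalar sequence converges; boundedness of the partial level sets (condition (ii)) then keeps the iterates $x^n$ bounded. Telescoping forces the per-step decrements, and hence $D_f(x^{n+1}, x^n)$, to zero, and condition (iv) converts this into asymptotic regularity $x^{n+1} - x^n \to 0$.

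Next I would identify the limit. Because the oracle returns every violated constraint with positive probability, a Borel--Cantelli argument shows that, almost surely, any constraint violated infinitely often is also projected upon infinitely often; combined with asymptotic regularity, this forces every cluster point to be feasible. Here the forgetting step must be controlled: a constraint is forgotten only when it is currently satisfied, so forgetting never permanently discards a persistently violated constraint. To certify optimality, I would track the nonnegative dual multipliers accumulated across projection steps and argue they converge to a KKT certificate for the limit point; strict convexity then identifies the cluster point as exactly $x^*$, and uniqueness upgrades this to $x^n \to x^*$ with probability one. For Part 2, I would localize: once $x^n$ is near $x^*$ and $H = Hf(x^*) \succ 0$, the Bregman distance is two-sided comparable to $\tfrac12\|x-y\|_H^2(1+o(1))$. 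A full randomized sweep of projections onto the finitely many active hyperplanes at $x^*$ then behaves, to leading order, like a randomized projection method onto their intersection in the $H$-geometry, contracting the $H$-distance by a factor $\rho \in (0,1)$ set by the angles between active normals (a Hoffman-type constant / smallest singular value in the $H$-metric); threading the comparability estimate through this contraction gives \eqref{eq:conv}.

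The hardest step is the optimality certification in Part 1: showing the almost-sure limit is the constrained \emph{minimizer}, not merely feasible. The forgetting mechanism means the algorithm never holds the full active set simultaneously, so the standard cyclic-projection duality bookkeeping has to be redone to prove that the accumulated multipliers still converge to a valid KKT certificate even though constraints are repeatedly dropped and later reintroduced.
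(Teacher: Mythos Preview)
This theorem is not proved in the paper. It is quoted verbatim as a result of \citet{JMLR:v23:20-1424} (the \textsc{Project and Forget} paper) and used as a black box to justify the convergence of the low-rank approximation subroutine in Appendix~\ref{app:details-computing-decomposition}. There is no ``paper's own proof'' to compare your proposal against.

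Your sketch is a reasonable outline of how one proves convergence for randomized Bregman-projection schemes (Fej\'er monotonicity of $D_f(x^*,\cdot)$, level-set boundedness, asymptotic regularity, Borel--Cantelli for the random oracle, KKT limit, and a local quadratic comparison for the linear rate), and you correctly flag the dual-bookkeeping under forgetting as the delicate point. But since the present paper does not attempt any of this, there is nothing here to assess your proposal against; if you want feedback on the argument itself you would need to compare it to the proof in \citet{JMLR:v23:20-1424}.
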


\begin{algorithm}[!ht]
\caption{General Algorithm.}
\label{alg:bregman}
\begin{algorithmic}[1]
    \Function{\textsc{Project and Forget}}{$f$, $\mathcal{Q}$ - the oracle}
        \State $L^{(0)} = \emptyset$, $z^{(0)} = 0$. Initialize $x^{(0)}$ so that $\nabla f(x^{(0)}) = 0$.
        \While{Not Converged}
            \State $L = \mathcal{Q} (x^{\nu})$ 
            \State $\tilde{L}^{{(\nu+1)}} = L^{(\nu)} \cup L$
            \State $x^{(\nu+1)}, z^{(n+1)} = $ Project($x^{(\nu)}, z^{(\nu)}, \tilde{L}^{(\nu+1)}$)
            \State $L^{(\nu+1)} = $ Forget($z^{(\nu+1)}, \tilde{L}^{(\nu+1)}$)
        \EndWhile
    \State \Return $x$
    \EndFunction
\end{algorithmic}
\end{algorithm}

\begin{algorithm}[!ht]
\caption{Project and Forget algorithms.}
\label{alg:pf1}
\begin{algorithmic}[1]
\Function{\textsc{Project}}{$x,z,L$}
    \For{ $H_i = \{ y: \langle a_i,y \rangle = b_i\} \in L$}
    	\State Find $x^*, \theta$ by solving $\nabla f(x^*) - \nabla f(x) = \theta a_i \text{ and } x^* \in H_i$
        \State $c_i = \min\left(z_i,\theta \right)$
        \State $x \leftarrow x_{new}$
        \State $x_{new}$ $\leftarrow$ such that $\nabla f(x_{new}) - \nabla f(x) = c_i a_i$
        \State $z_i \leftarrow z_i - c_i$
    \EndFor
    \State \Return{$x$, $z$}
\EndFunction
\Function{\textsc{Forget}}{$z,L$}
    \For{ $H_i = \{ x: \langle a_i,x \rangle = b_i\} \in L$}
        \If{$z_i == 0$} 
            \State Forget $H_i$ 
        \EndIf
    \EndFor
    \State \Return{$L$}
\EndFunction
\end{algorithmic}
\end{algorithm}

\subsection{Project and Forget for Sums of Supermodular Cones}

We adapt \textsc{Project and Forget} for optimizing over the cone of $\pi$-supermodular functions. The algorithm begins by initializing $L^{(\nu)}$, for $\nu = 0$, as the empty list. This will keep track of the violated and active constraints. 
The first step in \textsc{Project and Forget} is to implement an efficient oracle $\mathcal{Q}$ that, given a query point $x^{(\nu)}$, returns a list $L$ of violated constraints. This $L$ is merged $L^{(\nu)}$ to get $L^{(\nu+1)}$.  All violated constraints need not be returned, but each  constraint violated by $x^{\mu)}$ should be returned with positive probability. Here we detail two options.
\begin{itemize}[leftmargin=*]
    \item \textbf{Deterministic Oracle.} Go through all $\binom{n}{2} 2^{n-2}$ constraints and see which are violated. There are exponentially many such constraints.
    \item \textbf{Random Oracle} For each of the $\binom{n}{2}$ pair $i \neq j$, we sample $5n$ of the $2^{n-2}$ constraints. We return the violated ones. Each violated constraint has a positive probability of being returned. 
\end{itemize} 

Our objective function
    $\|f-g\|_{\ell_2}^2$ is quadratic, 
so for the project step, we use the formula for a quadratic objective from \citet{JMLR:v23:20-1424}. Specifically, we iteratively project $x^{(\nu)}$ onto each constraint in $L^{(\nu+1)}$ and update $z^{(\nu)}$ to get $x^{(\nu+1)}$ and $z^{(\nu+1)}$.
There is nothing to adapt in the forget step. We forget, that is remove from $L^{(\nu+1)}$, inactive constraints with $z^{(\nu+1)}_i = 0$. 

\begin{rem}With these adaptations Theorem \ref{thm:project} applies. Hence we have a linear rate of convergence. We take an exponential amount of time per iteration with the deterministic oracle. However, with the random oracle, we may take polynomial time per iteration. We might still need exponential time per iteration if there are exponentially many active constraints. That is, $L^{(\nu+1)}$ becomes exponentially long. See experiment in Figure \ref{fig:times} and Appendix \ref{app:experiments} for running times for computing low-rank approximations.
\end{rem}

\section{Details on the Set Function Optimization Guarantees} 
\label{app:details-set-function-optimization}
\label{app:curvature-and-other-notions}

We provide details and proofs for the results in Section~\ref{sec:set-function-optimization} and discuss related prior work. 

\subsection{Previous Results on Maximization of Monotone Set Functions} 
\label{app:prior-work-monotone}

We present the results from prior work that are the basis of our comparison. 

\paragraph{Submodular functions.} 
We begin with the following classical result. 

\begin{theorem}[\citet{Nemhauser1978AnAO}] 
\label{thm:nemhauser78}
Fix a normalized monotone submodular function $f \colon 2^V \to \mathbb{R}$ and let $\{S_i\}_{i \ge 0}$ be the greedily selected sets for constrained cardinality problem. Then for all positive integers $m$ and $\ell$, 
\[
    f(S_\ell) \ge \left(1 - e^{-\ell/m}\right) \max_{\Omega : |\Omega| \le m} f(\Omega). 
\]
In particular, for $\ell = m$, $f(S_m)$ is a $1-e^{-1}$ approximation for the optimal solution.
\end{theorem}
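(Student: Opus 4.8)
The plan is to compare the greedy sets $S_i$ against a fixed optimizer $\Omega^*$ with $|\Omega^*| \le m$ and $f(\Omega^*) = \mathrm{OPT} := \max_{|\Omega| \le m} f(\Omega)$, and to extract from submodularity a one-step contraction on the optimality gap $\delta_i := \mathrm{OPT} - f(S_i)$. Everything reduces to a single structural inequality, after which the bound falls out of a routine recursion.

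First I would establish the ``greedy sees a constant fraction of what is missing'' inequality: for every $i$,
\[
  \mathrm{OPT} \;\le\; f(S_i) + \sum_{e \in \Omega^* \setminus S_i} \Delta(e \mid S_i).
\]
To prove this, use monotonicity to pass from $\Omega^*$ to $\Omega^* \cup S_i$, then telescope $f(\Omega^* \cup S_i) - f(S_i)$ along any ordering $e_1, \ldots, e_k$ of $\Omega^* \setminus S_i$, writing it as $\sum_j \Delta(e_j \mid S_i \cup \{e_1,\ldots,e_{j-1}\})$, and bound each term by $\Delta(e_j \mid S_i)$ using the diminishing-returns form of submodularity (with respect to the reference order $\pi = (1,\ldots,1)$). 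This is the one step where \emph{both} submodularity and monotonicity are essential, and I expect it to be the main obstacle; the rest is bookkeeping.

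Next I would invoke the greedy rule. For each $i < \ell$, the pure cardinality constraint leaves every unselected element feasible, so each $e \in \Omega^* \setminus S_i$ is an admissible candidate at step $i+1$; hence $\Delta(e \mid S_i) \le \Delta(e_{i+1}\mid S_i) = f(S_{i+1}) - f(S_i)$, where $e_{i+1}$ is the greedy pick. Since $|\Omega^* \setminus S_i| \le |\Omega^*| \le m$, the displayed inequality becomes $\mathrm{OPT} \le f(S_i) + m\,(f(S_{i+1}) - f(S_i))$, that is $\delta_i \le m(\delta_i - \delta_{i+1})$, which rearranges to $\delta_{i+1} \le \left(1 - \tfrac{1}{m}\right)\delta_i$. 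Finally I would iterate this: with $\delta_0 = \mathrm{OPT} - f(\emptyset) = \mathrm{OPT}$ by normalization, induction gives $\delta_\ell \le \left(1 - \tfrac1m\right)^\ell \mathrm{OPT} \le e^{-\ell/m}\,\mathrm{OPT}$ via $1 - x \le e^{-x}$, and rearranging yields $f(S_\ell) \ge (1 - e^{-\ell/m})\,\mathrm{OPT}$; setting $\ell = m$ specializes to the $1 - e^{-1}$ guarantee. The only subtlety worth flagging is the feasibility point—that each optimal element remains an admissible greedy candidate—which is immediate here for the cardinality constraint but is precisely what requires more care under general matroid constraints.
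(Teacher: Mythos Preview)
Your proof is correct and is precisely the classical argument of Nemhauser, Wolsey, and Fisher: bound the optimality gap via monotonicity plus diminishing returns, use the greedy choice to get the one-step contraction $\delta_{i+1} \le (1 - 1/m)\delta_i$, and iterate. There is nothing to compare against, however, because the paper does not supply its own proof of this statement: Theorem~\ref{thm:nemhauser78} is quoted in the appendix as background from prior work, with attribution to \citet{Nemhauser1978AnAO} and no accompanying argument.
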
 

\citet{Clinescu2011MaximizingAM, filmus2012tight} 
extended the above result to the matroid constraint problem and obtained the following. 

\begin{theorem}[\citet{Clinescu2011MaximizingAM}] 
\label{thm:matorid-sub} 
There is a randomized algorithm that gives a $(1 - e^{-1})$-approximation (in expectation over the randomization in the algorithm) to the problem of maximizing a monotone, non-negative, submodular function $f\colon 2^{[n]} \to \mathbb{R}$ subject to matroid constraint $\mathcal{M}$ given by a membership oracle. The algorithm runs in $\tilde{O}(n^8)$ time. 
\end{theorem}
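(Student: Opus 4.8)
The plan is to prove this via the \emph{continuous greedy} algorithm of \citet{Clinescu2011MaximizingAM} followed by a rounding step, since the combinatorial greedy of Algorithm~\ref{alg:greedy} does not achieve $1-e^{-1}$ for general matroids. The first ingredient is the \emph{multilinear extension} $F\colon[0,1]^n\to\mathbb{R}$ of $f$, defined by $F(x)=\mathbb{E}[f(R(x))]$, where $R(x)$ is the random set containing each $i\in[n]$ independently with probability $x_i$. I would first record its two key properties, both inherited from the hypotheses on $f$: monotonicity gives $\partial F/\partial x_i\ge 0$ everywhere, and submodularity gives $\partial^2 F/\partial x_i\partial x_j\le 0$, so that $F$ is concave along every nonnegative direction. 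These follow by differentiating the explicit multilinear polynomial form of $F$ and identifying the coefficients with discrete derivatives $\Delta(\cdot\mid\cdot)$.

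Next I would set up the continuous greedy flow over the matroid polytope $P(\mathcal{M})=\mathrm{conv}\{\mathbf{1}_S:S\in\mathcal{M}\}$. Starting at $y(0)=0$, the process moves according to $y'(t)=v(t)$, where $v(t)\in\argmax_{v\in P(\mathcal{M})}\langle v,\nabla F(y(t))\rangle$; this linear maximization is solved in polynomial time by the matroid greedy algorithm using only the membership oracle. The crux of correctness is the differential inequality
\[
    \tfrac{d}{dt}F(y(t)) \;\ge\; \mathrm{OPT}-F(y(t)),
\]
where $\mathrm{OPT}=\max_{S\in\mathcal{M}}f(S)$. To obtain it, let $x^{*}=\mathbf{1}_{S^{*}}$ be the indicator of an optimal set; since $x^{*}\in P(\mathcal{M})$ the greedy direction satisfies $\langle v(t),\nabla F\rangle\ge\langle x^{*},\nabla F\rangle$, and setting $d=(y\vee x^{*})-y\ge 0$ one has $d\le x^{*}$ coordinatewise, so monotonicity ($\nabla F\ge 0$) and concavity along $d$ give $\langle x^{*},\nabla F(y)\rangle\ge\langle d,\nabla F(y)\rangle\ge F(y\vee x^{*})-F(y)\ge\mathrm{OPT}-F(y)$. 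Integrating with $F(y(0))=0$ by a Gr\"onwall (integrating-factor) argument yields $F(y(1))\ge(1-e^{-1})\,\mathrm{OPT}$, with $y(1)\in P(\mathcal{M})$.

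Finally I would convert the fractional point $y(1)$ into an integral independent set without losing objective value, using \emph{pipage rounding} (or swap rounding): repeatedly move along a direction inside a tight face of $P(\mathcal{M})$ that does not decrease $F$, until reaching a vertex, which is the indicator of some $S\in\mathcal{M}$ with $f(S)\ge F(y(1))\ge(1-e^{-1})\,\mathrm{OPT}$. The expectation in the statement enters both here and in the gradient estimation described below.

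I expect the main obstacle to be the algorithmic realization and the resulting $\tilde{O}(n^8)$ running time rather than the clean continuous argument above. Neither $F$ nor $\nabla F$ can be computed exactly, as each is a sum over exponentially many sets, so one must discretize $t\in[0,1]$ into polynomially many steps and estimate each partial derivative by averaging $f$ over polynomially many independent samples of $R(y)$. The delicate part is controlling the accumulated sampling error so that the $(1-e^{-1})$ guarantee survives up to a lower-order loss (absorbed by monotonicity and a slight rescaling of the target point), and the careful bookkeeping of the number of steps, samples, and oracle calls is exactly what produces the $\tilde{O}(n^8)$ bound.
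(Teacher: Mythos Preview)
The paper does not prove this theorem; it is stated in Appendix~\ref{app:prior-work-monotone} as a cited result from \citet{Clinescu2011MaximizingAM} and used as a black box (e.g., as the subroutine in Corollary~\ref{cor:matroid}). Your sketch is the standard proof from that reference---multilinear extension, continuous greedy over the matroid polytope with the differential inequality $\tfrac{d}{dt}F(y(t))\ge \mathrm{OPT}-F(y(t))$, then pipage/swap rounding---and is correct, so there is nothing further to compare against in this paper.
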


\begin{theorem}[\citet{filmus2012tight}] Let $f$ be a normalized, positive, monotone, submodular function, and let $\mathcal{M}$ be a rank $\rho$ matroid. For all $\epsilon > 0$, there exists a randomized algorithm that is a $1-e^{-1} - \epsilon$ approximation for the maximization of $f$ over $\mathcal{M}$ that queries $f$ at most $O(\epsilon^{-1} \rho^2 n \log(n))$ times.
\end{theorem}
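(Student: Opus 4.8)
The plan is to prove the result by \emph{non-oblivious local search}, exploiting the fact that ordinary (oblivious) local search for monotone submodular maximization over a matroid is only guaranteed to reach a $\tfrac12$-approximation, whereas searching against a carefully engineered surrogate potential pushes the guarantee up to the optimal $1-e^{-1}$. First I would fix a matroid $\mathcal{M}$ of rank $\rho$ on $[n]$ and introduce an auxiliary potential $g\colon 2^{[n]}\to\mathbb{R}$ that is a non-negative linear combination of the values of $f$ on subsets, $g(S)=\sum_{T\subseteq S} w_{|S|,|T|}\, f(T)$, with weights $w_{|S|,|T|}\ge 0$ chosen according to a recurrence so that $g$ is again monotone and so that its local optima are provably $(1-e^{-1})$-approximate for $f$. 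The algorithm itself is then plain local search on $g$: start from any base of $\mathcal{M}$ and repeatedly perform an in-matroid element swap $S\mapsto(S\setminus\{e\})\cup\{e'\}$ that strictly increases $g$, stopping when no improving swap exists.

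The core analytic step is to show that any local optimum $S$ of $g$ satisfies $f(S)\ge(1-e^{-1})f(O)$, where $O$ is a global optimum in $\mathcal{M}$. Here I would invoke the matroid exchange property to fix a bijection $\phi\colon O\setminus S\to S\setminus O$ with $(S\setminus\{\phi(o)\})\cup\{o\}\in\mathcal{M}$ for each $o$, so that every pair yields a valid no-improvement inequality $g((S\setminus\{\phi(o)\})\cup\{o\})\le g(S)$. Summing these inequalities, rewriting the differences of $g$ in terms of discrete derivatives of $f$, and applying submodularity of $f$ collapses the sum into a linear recurrence in the weights $w_{\cdot,\cdot}$; the weights are engineered precisely so that this telescoped recurrence certifies the factor $1-e^{-1}$. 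This is the step I expect to be the main obstacle: choosing the weight sequence so that the collapsed inequality is tight (rather than the $\tfrac12$ one obtains from the naive choice $g=f$), and simultaneously verifying that the resulting $g$ stays monotone so that local search is well-defined.

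The remaining work is to make the procedure query-efficient, which is the quantitative content of the bound $O(\epsilon^{-1}\rho^2 n\log n)$. Since $g(S)$ is a sum of exponentially many terms it cannot be evaluated exactly with few oracle calls; instead, observing that the normalized positive weights define a distribution over subsets $T\subseteq S$, I would estimate $g(S)$ and its swap-marginals by sampling $T$ from this distribution and averaging $f(T)$, using a Hoeffding bound to guarantee additive accuracy from only a few samples per estimate. I would then run \emph{approximate} local search, accepting a swap only when its estimated gain exceeds a threshold on the order of $\epsilon\,g(S)/\rho$; a union bound over the inspected swaps controls the probability that any estimate misleads the search. To bound the total number of queries I would combine three ingredients: (i) each accepted swap increases $g$ by at least a $(1+\Omega(\epsilon/\rho))$ factor, and $g$ lies in a bounded multiplicative window determined by $f$, so the number of improving steps is near-linear in $\rho$ and $\epsilon^{-1}$; (ii) each step inspects $O(\rho n)$ candidate swaps; and (iii) each marginal estimate needs only logarithmically many samples after the union bound. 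A careful accounting of these factors yields the stated complexity $O(\epsilon^{-1}\rho^2 n\log n)$, while the threshold and the estimation error degrade the guarantee from $1-e^{-1}$ to $1-e^{-1}-\epsilon$. An alternative, if one is willing to accept a far weaker query bound, is simply to cite the continuous-greedy algorithm of Theorem~\ref{thm:matorid-sub}, which already attains $1-e^{-1}$ but in $\tilde O(n^8)$ time; the whole point of the present statement is the near-linear query efficiency, which is exactly why the non-oblivious local-search route is the one to pursue.
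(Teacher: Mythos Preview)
The paper does not prove this statement at all: it is quoted verbatim as a result of \citet{filmus2012tight} in the appendix surveying prior work, with no accompanying argument. So there is no ``paper's own proof'' to compare against.

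That said, your sketch is an accurate high-level outline of the actual Filmus--Ward argument: non-oblivious local search on a surrogate potential $g(S)=\sum_{T\subseteq S} w_{|S|,|T|}\,f(T)$, weights tuned so that the matroid-exchange inequalities at a local optimum telescope to $f(S)\ge(1-e^{-1})f(O)$, and a sampling-based estimator for $g$ to keep the query count near-linear. The pieces you flag as the main obstacles---choosing the weight recurrence and controlling the number of approximate-improvement steps---are indeed where the work lies in the original paper. For the purposes of \emph{this} paper, however, none of that is needed; the theorem is invoked only as a black box.
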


\paragraph{Total curvature.} 

The total curvature $\hat{\alpha}$ measures how far a submodular function is from being modular, 
see Definiton \ref{def:total-curvature}).
It can be used to refine Theorem~\ref{thm:nemhauser78}. 
We have $\hat{\alpha} = 0$ if and only if the function is modular, and that $\hat{\alpha} \le 1$ for any submodular function. 
Then \citet{CONFORTI1984251} prove the following. 

\begin{theorem}[{\citet[Theorem 2.3]{CONFORTI1984251}}] If $\mathcal{M}$ is a matroid and $f$ is a normalized, monotone submodular function with total curvature $\hat{\alpha}$, then \textsc{Greedy} returns a set $S$ with
\[
    f(S) \ge \frac{1}{1+\hat{\alpha}} \max_{\Omega \in \mathcal{M}} f(\Omega). 
\]
\end{theorem}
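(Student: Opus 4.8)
The plan is to run the classical greedy-versus-optimal exchange argument, refined quantitatively by the total curvature. First I would reduce to bases. Since $f$ is monotone, \textsc{Greedy} terminates at a maximal independent set, i.e.\ a basis $G=\{g_1,\dots,g_\rho\}$, which I index in order of insertion and abbreviate $G_i=\{g_1,\dots,g_i\}$; and I may assume the optimum $O$ is a basis as well, since enlarging a feasible set only increases $f$. The matroid structure then enters through the symmetric-exchange (Brualdi) property: I would relabel $O=\{o_1,\dots,o_\rho\}$ so that $(G\setminus\{g_i\})\cup\{o_i\}$ is a basis for every $i$, with $o_i=g_i$ whenever $g_i\in O$. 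Restricting to the subset $G_{i-1}\cup\{o_i\}$, which is independent, shows $o_i$ was an admissible candidate at greedy step $i$, so the greedy choice yields the key inequality $\Delta(g_i\mid G_{i-1})\ge \Delta(o_i\mid G_{i-1})$ for all $i$; summing gives $f(G)=\sum_i\Delta(g_i\mid G_{i-1})\ge\sum_i\Delta(o_i\mid G_{i-1})$.

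The second ingredient is the total curvature of Definition~\ref{def:total-curvature}. By submodularity it upgrades to the statement that the marginal of any element can drop by at most a $\hat{\alpha}$-fraction of its stand-alone value: for every $e$ and every $S$ with $e\notin S$ one has $\Delta(e\mid S)\ge(1-\hat{\alpha})\,\Delta(e\mid\emptyset)$. This is precisely the quantitative gap between a general monotone submodular function ($\hat{\alpha}=1$, no protection) and a modular one ($\hat{\alpha}=0$, marginals never decrease), and it is what should convert the generic matroid factor $\tfrac12$ into the sharper $\tfrac{1}{1+\hat{\alpha}}$.

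With these two ingredients, two coarse bounds come essentially for free: using submodularity alone gives the classical $f(O)\le 2f(G)$, while routing every optimal marginal through its stand-alone value via the curvature bound gives only the crude $f(O)\le\tfrac{1}{1-\hat{\alpha}}f(G)$. Neither is the target. To get the sharp constant I would bound $f(O)$ \emph{directly} rather than through $f(G\cup O)$ (the latter is already too lossy: for modular $f$ it overshoots). The mechanism is to interpolate between $G$ and $O$, for instance through the hybrid sets $H_i=\{o_1,\dots,o_i\}\cup\{g_{i+1},\dots,g_\rho\}$, so that $H_0=G$, $H_\rho=O$, and consecutive sets differ by the single swap $g_i\mapsto o_i$; the increment is then $f(H_i)-f(H_{i-1})=\Delta(o_i\mid R_i)-\Delta(g_i\mid R_i)$ with $R_i=H_{i-1}\setminus\{g_i\}$. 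The idea is to charge the positive optimal terms against the greedy gains via the exchange inequality, while using the curvature bound to show the retained greedy elements $g_i$ still carry a $(1-\hat{\alpha})$-fraction of their value, so that the negative terms absorb all but a $\hat{\alpha}$-fraction of $f(G)$, leaving $f(O)\le f(G)+\hat{\alpha}f(G)=(1+\hat{\alpha})f(G)$.

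The main obstacle is exactly this last piece of curvature bookkeeping. The exchange inequality conditions on the greedy prefixes $G_{i-1}$, whereas any telescoping of $f(O)$ conditions on sets containing optimal elements, and these conditioning sets are incomparable, so submodularity does not directly line up the two sides. The crux is to choose the interpolation and deploy the curvature inequality so that the cross terms cancel to the \emph{exact} constant $1+\hat{\alpha}$ — recovering ratio $1$ for modular $f$ ($\hat{\alpha}=0$) and the tight $\tfrac12$ for a general monotone submodular $f$ ($\hat{\alpha}=1$) — rather than the weaker $\tfrac{1}{1-\hat{\alpha}}$ produced by a naive term-by-term comparison.
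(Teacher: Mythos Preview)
The paper does not prove this statement; it is quoted verbatim as \citet[Theorem 2.3]{CONFORTI1984251} in Appendix~\ref{app:prior-work-monotone} without any accompanying argument. So there is no ``paper's proof'' to compare against, and the relevant question is simply whether your sketch can be completed.

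Your ingredients are the right ones, but the plan takes a wrong turn when you decide to ``bound $f(O)$ directly rather than through $f(G\cup O)$'' on the grounds that the latter ``overshoots for modular $f$.'' In fact the standard argument \emph{does} route through $f(G\cup O)$, and it is not lossy because one uses it \emph{twice}, once from each side. With your notation and pairing (so that $J=\{i:g_i\notin O\}=\{i:o_i\notin G\}$ and $T:=\sum_{i\in J}\Delta(g_i\mid G_{i-1})\le f(G)$), submodularity plus the greedy inequality give
\[
f(G\cup O)-f(G)\;\le\;\sum_{i\in J}\Delta(o_i\mid G_{i-1})\;\le\;\sum_{i\in J}\Delta(g_i\mid G_{i-1})\;=\;T,
\]
while telescoping the other way and applying the curvature bound $\Delta(g_i\mid S)\ge(1-\hat\alpha)\Delta(g_i\mid\emptyset)\ge(1-\hat\alpha)\Delta(g_i\mid G_{i-1})$ gives
\[
f(G\cup O)-f(O)\;\ge\;(1-\hat\alpha)\,T.
\]
Subtracting, $f(O)\le f(G)+T-(1-\hat\alpha)T=f(G)+\hat\alpha\,T\le(1+\hat\alpha)f(G)$, which is exactly the claim and degenerates correctly at both endpoints $\hat\alpha\in\{0,1\}$. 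Your hybrid-path interpolation $H_i$ is unnecessary, and the ``obstacle'' you identify --- mismatched conditioning sets --- disappears once you stop trying to bound $f(O)$ term-by-term and instead combine the two global inequalities above.
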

Building on this, \citet{sviridenko2017optimal} present a the 
\textsc{Non-Oblivious Local Search Greedy} algorithm. 

\begin{theorem}[{\citet[Theorem 6.1]{sviridenko2017optimal}}] \label{thm:svidernko} For every $\epsilon > 0$, matroid $\mathcal{M}$, and monotone, non-negative submodular function $f$ with total curvature $\hat{\alpha}$, \textsc{Non Oblivous Local Search Greedy} produces a set $S$ with high probability, in $O(\epsilon^{-1} poly(n))$ time that satisfies
\[
    f(S) \ge (1 - \hat{\alpha}e^{-1} + O(\epsilon)) \max_{\Omega \in \mathcal{M}} f(\Omega). 
\]
\end{theorem}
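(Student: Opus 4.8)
The stated result is the guarantee of Sviridenko, Vondr\'ak and Ward for \textsc{Non-Oblivious Local Search Greedy}, so the plan is to reconstruct their argument rather than to invoke the supermodular-cone machinery of the earlier sections, which plays no role here. The first idea is that directly (``obliviously'') hill-climbing on $f$ over the bases of $\mathcal{M}$ only certifies a $\tfrac12$-type bound that does not improve with curvature. Instead one performs local search on an auxiliary potential $g$ whose approximate local optima are provably good feasible solutions for $f$; to obtain both a clean analysis and a polynomial-time algorithm I would carry this out in the continuous relaxation, working with the multilinear extension $F(x)=\mathbb{E}[f(\hat x)]$ of $f$ over the matroid polytope $P(\mathcal{M})$.

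The key steps, in order, would be: (i) Exploit the curvature hypothesis to split $f$. By Definition~\ref{def:total-curvature}, every marginal satisfies $\Delta(e\mid S)\ge (1-\hat{\alpha})\,\Delta(e\mid\emptyset)$, so $f=\ell+h$ where $\ell$ is the modular function $\ell(S)=\sum_{e\in S}\Delta(e\mid\emptyset)$ and $h$ is a monotone submodular remainder whose total mass is controlled by $\hat{\alpha}$. The modular part can be recovered exactly, while the submodular part incurs only the classical $(1-e^{-1})$ loss; this is what produces the discount $\hat{\alpha}e^{-1}$ rather than a full-strength loss. (ii) Define the auxiliary potential $g$ (equivalently its multilinear version $G$) as a reweighting of the marginals of $f$ with carefully chosen coefficients, so that the stationarity condition at a local optimum, combined with a matroid exchange between the current base and an optimal base $\Omega^\ast$, yields $f(S)\ge (1-\hat{\alpha}e^{-1})\,f(\Omega^\ast)-O(\epsilon)\,f(\Omega^\ast)$. (iii) Run approximate continuous local search on $G$ over $P(\mathcal{M})$ until an $\epsilon$-approximate local optimum is reached, bounding the number of improving steps by a polynomial in $n$ and $\epsilon^{-1}$ to get the $O(\epsilon^{-1}\,\mathrm{poly}(n))$ running time. (iv) Round the fractional point to an integral basis via pipage or swap rounding, which preserves the value of $F$ up to an $O(\epsilon)$ factor while keeping feasibility in $\mathcal{M}$.

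The main obstacle is step (ii): choosing the weights in the auxiliary potential so that the local-optimality certificate yields exactly the sharp constant $1-\hat{\alpha}e^{-1}$, and not some weaker expression, while simultaneously keeping the potential and its gradient efficiently (approximately) evaluable inside the matroid polytope. A secondary difficulty is propagating the $O(\epsilon)$ errors consistently through the approximate stopping rule and the rounding so that they appear only as the additive $O(\epsilon)$ term in the final ratio. Finally, the assertion that this ratio is \emph{optimal} is not part of the inequality above; I would establish it separately through a symmetry-gap / value-oracle hardness construction, exhibiting instances of curvature $\hat{\alpha}$ on which no polynomially many queries can beat $1-\hat{\alpha}e^{-1}+o(1)$.
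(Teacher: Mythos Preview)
The paper does not prove this statement at all: Theorem~\ref{thm:svidernko} is quoted verbatim from \cite{sviridenko2017optimal} in the appendix section on prior work, with no accompanying argument. It is used as a black box (specifically, as the subroutine guarantee in Corollary~\ref{cor:non-oblivious}), so there is nothing in the paper to compare your proposal against.

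That said, your sketch is a reasonable outline of the Sviridenko--Vondr\'ak--Ward argument, with one caveat. Step~(i) as you have written it is not quite how the curvature enters their analysis: they do not first split $f=\ell+h$ with $\ell$ modular and then optimize the pieces separately. Rather, the curvature bound $\Delta(e\mid S)\ge(1-\hat\alpha)\Delta(e\mid\emptyset)$ is invoked \emph{inside} the local-optimality certificate for the auxiliary potential $G$, which is defined as an integral reweighting of $F$ along rays (roughly $G(x)=\int_0^1 \frac{e^t}{e-1}\,\partial_t F(tx)\,dt$). The sharp constant $1-\hat\alpha e^{-1}$ falls out of this particular weighting combined with the curvature lower bound on marginals; it is not obtained by handling a modular part exactly and a residual submodular part at $(1-e^{-1})$. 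Your steps (iii) and (iv) on approximate continuous local search and swap/pipage rounding are accurate, as is your remark that the matching hardness is a separate symmetry-gap construction.
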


Further \citet{sviridenko2017optimal}, provide the following result to show that no polynomial time algorithm can do better. 

 \begin{theorem}[\citet{sviridenko2017optimal}]
     For any constant $\delta > 0$ and $c \in (0,1)$, there is no $(1-ce^{-1} + \delta)$ approximation algorithm for the cardinality constraint maximization problem for monotone submodular functions $f$ with total curvature $\hat{a}^f \le c$, that evaluates $f$ on only a polynomial number of sets. 
 \end{theorem}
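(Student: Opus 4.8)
The plan is to establish this as an information-theoretic lower bound in the value-oracle model via an indistinguishability (symmetry-gap) argument, rather than a computational hardness reduction. The target ratio $1 - ce^{-1}$ suggests interpolating between the two extremes of curvature: a modular function ($\hat\alpha = 0$) can be maximized exactly, while a general monotone submodular function ($\hat\alpha = 1$) is subject to the $1 - e^{-1}$ barrier of Theorem~\ref{thm:nemhauser78}. I would build a single hard family whose curvature can be tuned to any $c \in (0,1)$ and whose hardness degrades exactly linearly in $c$.

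First I would recall the canonical hard instance for monotone submodular maximization under a cardinality constraint, the one underlying the $1 - e^{-1}$ inapproximability in the value-oracle model. Concretely, one plants a random optimal solution of size $m$ inside a symmetric base function so that an algorithm making only polynomially many (even adaptive) queries cannot, with high probability, locate the planted structure; its output value then concentrates at the symmetrized value, which is a $(1 - e^{-1})$-fraction of the true optimum. Call this core function $f_{\mathrm{hard}}$; it has total curvature $1$, with optimal value $H^*$ over the constraint.

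Next I would dilute the curvature by adding a modular (additive) function $\ell$, setting $f = f_{\mathrm{hard}} + \ell$. Adding a modular function leaves every numerator $\Delta(e\mid\emptyset) - \Delta(e\mid V\setminus\{e\})$ in Definition~\ref{def:total-curvature} unchanged while inflating each denominator $\Delta(e\mid\emptyset)$, so by scaling $\ell$ uniformly I can drive the total curvature of $f$ down to exactly $c$; a short computation shows the required scaling makes the modular optimum satisfy $M^* = \tfrac{1-c}{c}\,H^*$. Because the modular part can be read off and optimized exactly while the submodular core still obeys the $1 - e^{-1}$ barrier, the best value any polynomial-query algorithm can guarantee is $M^* + (1 - e^{-1})H^*$, whereas the true optimum is $M^* + H^*$. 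With the above scaling this ratio collapses to exactly $1 - ce^{-1}$, and the additive slack $\delta$ absorbs the $o(1)$ error terms and the (sub-constant) failure probability of the indistinguishability argument.

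The main obstacle is the query lower bound itself: rigorously proving that polynomially many adaptive value queries cannot distinguish the planted instance from its symmetrization, so that no algorithm beats the symmetric value on the core. This is the technical heart and is cleanest to obtain by invoking Vondr\'ak's symmetry-gap theorem for a symmetric instance engineered to have symmetry gap $1 - ce^{-1}$; the secondary difficulty is the curvature bookkeeping, ensuring the diluted function has total curvature \emph{exactly} $c$ while the modular addition does not inadvertently reduce the hardness of the core. Since this result is due to \citet{sviridenko2017optimal}, I would ultimately defer the full indistinguishability proof to that reference.
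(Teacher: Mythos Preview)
The paper does not prove this statement at all: it is quoted verbatim as a result of \citet{sviridenko2017optimal} in the survey of prior work in Appendix~\ref{app:prior-work-monotone}, with no proof or proof sketch given. So there is nothing to compare your proposal against in this paper.

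That said, your outline is a faithful description of the actual argument in \citet{sviridenko2017optimal}: take the classical hidden-set hard instance underlying the $1-e^{-1}$ oracle lower bound, add a uniform modular function to push the total curvature down to $c$, and observe that the modular part is optimized exactly while the submodular core retains the $1-e^{-1}$ gap, yielding an overall ratio of $1-ce^{-1}$. Your identification of the symmetry-gap/indistinguishability step as the technical crux is correct, and your final remark that one must defer to the original reference for that step is appropriate here, since the present paper does exactly that.
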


For arbitrary monotone increasing functions, they define a curvature $c$ (different from Definition~\ref{def:inverse-curvature}), 
which agrees with $\hat\alpha$ (Definition~\ref{def:total-curvature}) for monotone submodular functions, to get a $(1-c)$ approximation ratio with the \textsc{Greedy} algorithm.

\paragraph{Approximately submodular functions.} 

Recall the submodularity ratio $\gamma$ and the generalized curvature $\alpha$ from Definition~\ref{def:generalizedcurvaturesubmratio}.  

\begin{theorem}[\citet{bian2017guarantees}] \label{thm:bian}
Fix a non-negative monotone function $f \colon 2^V \to \mathbb{R}$ with submodularity ratio $\gamma$ and curvature $\alpha$. 
Let $\{S_i\}_{i \ge 0}$ be the sequence produced by the \textsc{Greedy} algorithm.
Then for all positive integers $m$, 
\begin{align*}
    f(S_m) &\ge \frac{1}{\alpha}\left[1 - \left(\frac{m - \alpha \gamma}{m}\right)^m\right] \max_{\Omega : |\Omega| \le m} f(\Omega) \\
           &\ge  \frac{1}{\alpha}(1-e^{-\alpha \gamma})\max_{\Omega : |\Omega| \le m} f(\Omega). 
\end{align*}
Further, the above bound is tight for the \textsc{Greedy} algorithm. 
\end{theorem}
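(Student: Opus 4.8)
The plan is to prove a one-step progress bound for the \textsc{Greedy} iterates and then unroll it. Write $\Omega^{\star}$ for a maximizer with $|\Omega^{\star}|\le m$, set $\mathrm{OPT}=f(\Omega^{\star})$, and let $S_0\subset S_1\subset\cdots$ be the greedy chain, where $s_j$ is the element added at step $j$ and $S^{<j}=\{s_1,\dots,s_{j-1}\}$. The target is the recursion
\[
    f(S_{i+1})-f(S_i)\ \ge\ \frac{\gamma}{m}\bigl(\mathrm{OPT}-\alpha\,f(S_i)\bigr),
\]
which I will assemble from three ingredients: the submodularity ratio, the generalized curvature, and greedy maximality.

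First, the \emph{submodularity ratio} step. By monotonicity $\mathrm{OPT}\le f(S_i\cup\Omega^{\star})$, and applying Definition~\ref{def:generalizedcurvaturesubmratio} with $T=S_i$ and the disjoint set $\Omega^{\star}\setminus S_i$ gives
\[
    \sum_{e\in\Omega^{\star}\setminus S_i}\Delta(e\mid S_i)\ \ge\ \gamma\,\bigl(f(S_i\cup\Omega^{\star})-f(S_i)\bigr).
\]
Second, the \emph{curvature} step, which is the crux. I claim $f(S_i\cup\Omega^{\star})\ge\mathrm{OPT}+(1-\alpha)f(S_i)$. To see this, expand $f(S_i\cup\Omega^{\star})$ starting from $\Omega^{\star}$ and adding the elements of $S_i$ in greedy order, so that $f(S_i\cup\Omega^{\star})=\mathrm{OPT}+\sum_{s_j\in S_i\setminus\Omega^{\star}}\Delta\bigl(s_j\mid\Omega^{\star}\cup S^{<j}\bigr)$. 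Applying generalized curvature with $e=s_j$, conditioning set $S^{<j}$, and extra set $\Omega^{\star}$ lower-bounds each term by $(1-\alpha)\Delta(s_j\mid S^{<j})$; summing and using $\sum_j\Delta(s_j\mid S^{<j})=f(S_i)$ yields the claim. Substituting this into the submodularity-ratio bound gives $\sum_{e\in\Omega^{\star}\setminus S_i}\Delta(e\mid S_i)\ge\gamma\bigl(\mathrm{OPT}-\alpha f(S_i)\bigr)$.

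Third, \emph{greedy maximality}: since each step adds the element of largest marginal value, $\Delta(e\mid S_i)\le f(S_{i+1})-f(S_i)$ for every $e\notin S_i$, and there are at most $|\Omega^{\star}|\le m$ summands, so $\sum_{e\in\Omega^{\star}\setminus S_i}\Delta(e\mid S_i)\le m\,\bigl(f(S_{i+1})-f(S_i)\bigr)$. Combining the three ingredients proves the recursion. Rewriting it as $\mathrm{OPT}-\alpha f(S_{i+1})\le\bigl(1-\tfrac{\alpha\gamma}{m}\bigr)\bigl(\mathrm{OPT}-\alpha f(S_i)\bigr)$ and iterating from $f(S_0)=0$ gives $\mathrm{OPT}-\alpha f(S_m)\le\bigl(1-\tfrac{\alpha\gamma}{m}\bigr)^{m}\mathrm{OPT}$, i.e.\ $f(S_m)\ge\tfrac{1}{\alpha}\bigl[1-(1-\tfrac{\alpha\gamma}{m})^{m}\bigr]\mathrm{OPT}$, and the second displayed inequality of the theorem follows from $1-x\le e^{-x}$ with $x=\tfrac{\alpha\gamma}{m}$.

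I expect the curvature step to be the main obstacle, specifically the bookkeeping for elements of $S_i\cap\Omega^{\star}$: these contribute to $f(S_i)$ but drop out of the telescoped sum (their marginal is $0$ once $\Omega^{\star}$ is present), so the naive argument only recovers $(1-\alpha)\!\sum_{s_j\in S_i\setminus\Omega^{\star}}\Delta(s_j\mid S^{<j})$ rather than $(1-\alpha)f(S_i)$. The clean inequality holds outright when $S_i\cap\Omega^{\star}=\emptyset$, and the general case must be handled by verifying that the overlapping terms can be absorbed without degrading the constant. Finally, the tightness assertion is a separate matter: it requires exhibiting, for each prescribed $\gamma$ and $\alpha$, an explicit worst-case family of functions on which \textsc{Greedy} attains the stated ratio, which I would construct from a modular base plus a calibrated correction making the submodularity-ratio and curvature inequalities simultaneously tight along the greedy trajectory.
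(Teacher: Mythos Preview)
The paper does not prove this theorem: it is stated in Appendix~\ref{app:prior-work-monotone} as a prior result of \citet{bian2017guarantees} and is used only as a black box (as the subroutine guarantee feeding into Theorem~\ref{thm:main-result} and Corollary~\ref{cor:cardinality}). There is therefore no proof in the paper to compare your proposal against.

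For what it is worth, your outline is the standard one from the original reference: establish the one-step recursion $f(S_{i+1})-f(S_i)\ge\tfrac{\gamma}{m}(\mathrm{OPT}-\alpha f(S_i))$ by combining the submodularity ratio, the generalized curvature, and greedy maximality, then unroll. The concern you flag is real: the clean inequality $f(S_i\cup\Omega^\star)\ge\mathrm{OPT}+(1-\alpha)f(S_i)$ does not follow from the telescoping you wrote when $S_i\cap\Omega^\star\neq\emptyset$, because the curvature bound only applies to $s_j\in S_i\setminus\Omega^\star$ while $f(S_i)=\sum_j\Delta(s_j\mid S^{<j})$ sums over all of $S_i$. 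The original proof handles this overlap with more careful bookkeeping rather than a single $(1-\alpha)f(S_i)$ term, so your instinct that this is the main technical obstacle is correct; you would need to consult \cite{bian2017guarantees} directly to see the precise fix. The tightness claim is likewise established there by an explicit construction and is not reproduced in the present paper.
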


Departing from cardinality constrained matroids, \citet{10.5555/2634074.2634180, chen2018weakly} optimize non-negative monotone set functions with submodularity ratio $\gamma$ over general matroids. 

\begin{theorem}[\citet{chen2018weakly}] \label{thm:chen}
The \textsc{Residual Random Greedy} algorithm has an approximation ratio of at least $(1+\gamma^{-1})^{-2}$ for the problem of maximizing a non-negative monotone set function with submodularity ratio $\gamma$ subject to a matroid constraint. 
\end{theorem}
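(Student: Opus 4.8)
The plan is to analyze the \textsc{Residual Random Greedy} (RRG) algorithm directly, tracking two coupled quantities in expectation across the rounds. Recall that RRG runs for $\rho$ rounds (where $\rho$ is the matroid rank): in round $i$ it first selects a base $M_i$ of the contracted matroid $\mathcal{M}/S_{i-1}$ maximizing the total marginal weight $\sum_{u\in M_i}\Delta(u\mid S_{i-1})$, and then adds to $S_{i-1}$ a single element $u_i$ drawn \emph{uniformly at random} from $M_i$. Since $f$ is monotone, an optimal solution $O^\ast$ may be taken to be a base. First I would construct, alongside the random chain $S_0\subseteq S_1\subseteq\cdots$, a sequence of \emph{residual optima} $O_0=O^\ast\supseteq O_1\supseteq\cdots$ with each $O_i$ a base of $\mathcal{M}/S_i$. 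The construction uses the matroid exchange property to fix, in each round, a bijection $h_i\colon M_i\to O_{i-1}$ between these two bases of $\mathcal{M}/S_{i-1}$, and sets $O_i=O_{i-1}\setminus\{h_i(u_i)\}$; the key point is that, because $u_i$ is uniform on $M_i$, the deleted element $h_i(u_i)$ is uniform on $O_{i-1}$.

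The heart of the argument is two one-step inequalities. For the \textbf{gain}, since $M_i$ is a maximum-weight base while $O_{i-1}$ is a competing feasible base of $\mathcal{M}/S_{i-1}$, we have $\sum_{u\in M_i}\Delta(u\mid S_{i-1})\ge\sum_{o\in O_{i-1}}\Delta(o\mid S_{i-1})$, and the submodularity ratio (Definition~\ref{def:generalizedcurvaturesubmratio}) converts the right-hand sum into at least $\gamma\,\Delta(O_{i-1}\mid S_{i-1})$; averaging over the uniform choice of $u_i$ then gives
\[
\E\!\left[\Delta(u_i\mid S_{i-1})\mid S_{i-1},O_{i-1}\right]\;\ge\;\frac{\gamma}{\rho}\bigl(f(S_{i-1}\cup O_{i-1})-f(S_{i-1})\bigr).
\]
For the \textbf{residual}, monotonicity gives $f(S_i\cup O_i)\ge f\bigl((S_{i-1}\cup O_{i-1})\setminus\{h_i(u_i)\}\bigr)$, and averaging the deletion of the uniformly random element $h_i(u_i)$ against a telescoping bound controlled by $\gamma$ shows that the expected loss $f(S_{i-1}\cup O_{i-1})-\E[f(S_i\cup O_i)]$ is at most a controlled $1/\rho$ fraction of the current gap $f(S_{i-1}\cup O_{i-1})-f(S_{i-1})$. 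Writing $g_i=\E[f(S_i)]$ and $p_i=\E[f(S_i\cup O_i)]$, with $p_0=f(O^\ast)$ and $g_0=f(\emptyset)$, these two inequalities yield a coupled linear recursion in $(g_i,p_i)$.

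Finally I would solve the recursion. The target ratio $(1+\gamma^{-1})^{-2}=\bigl(\gamma/(1+\gamma)\bigr)^2$ is a product of two factors of $\gamma/(1+\gamma)$, and each arises from one of the two inequalities: the gain inequality feeds value into $g_i$ at rate $\gamma/\rho$ relative to the gap $p_i-g_i$, while the residual inequality lets $p_i$ decay at a rate also scaled by $\gamma/\rho$; compounding over the $\rho$ rounds (via an exact telescoping bound, or a standard $(1-x/\rho)^\rho\to e^{-x}$ estimate) produces the product of the two $\gamma/(1+\gamma)$ contributions. The main obstacle I anticipate is the residual step: one must show that $\E[f(S_i\cup O_i)]$ does not collapse too quickly under the random deletions, which requires pairing the exchange bijection $h_i$ with the randomization so that the marginal value of the deleted element is itself governed by the submodularity ratio $\gamma$ rather than being adversarially large. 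Once this coupling of the residual optima with the randomness is set up correctly, the gain inequality and the solution of the recursion are comparatively routine.
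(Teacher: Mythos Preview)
This theorem is not proved in the paper: it is quoted from \citet{chen2018weakly} as background in the survey of prior work (Appendix~\ref{app:prior-work-monotone}), and the paper gives no proof of its own. There is therefore nothing in the paper to compare your argument against. Your sketch does follow the standard line of analysis used for \textsc{Residual Random Greedy}---the coupled tracking of $S_i$ and a shrinking residual optimum $O_i$ via matroid exchange bijections, a gain step that invokes the submodularity ratio on $\sum_{o\in O_{i-1}}\Delta(o\mid S_{i-1})$, and a residual step controlling the expected decay of $f(S_i\cup O_i)$---and the two factors of $\gamma/(1+\gamma)$ in the final bound do arise from these two parts of the recursion, so as a high-level plan it is faithful to the original source.

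One caution on the residual step, since you flag it yourself: the inequality you need there is not that the expected loss is at most a $1/\rho$ fraction of the gap $p_{i-1}-g_{i-1}$ ``controlled by $\gamma$'', but rather that $\E[f(S_i\cup O_i)]\ge\bigl(1-\tfrac{1}{\gamma\rho}\bigr)\,p_{i-1}+\tfrac{1}{\gamma\rho}\,g_{i-1}$ (note the $1/\gamma$, which \emph{hurts} you, appearing when you upper-bound a single marginal by the sum via the submodularity ratio in the reverse direction). Getting the direction of the $\gamma$ dependence right in this step is exactly what produces the second $\gamma/(1+\gamma)$ factor after solving the recursion; if you accidentally put a $\gamma$ rather than a $1/\gamma$ there, the recursion will not close to the stated bound.
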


Following this, \citet{gatmiry2018non} looked at the approximation for the \textsc{Greedy} Algorithm for set functions with submodularity ratio $\gamma$ and curvature $\alpha$, subject to general matroid constraints. 

\begin{theorem}[\citet{gatmiry2018non}] \label{thm:gat1}
Given a matroid $\mathcal{M}$ with rank $\rho \ge 3$ and a monotone set function $f$ with submodularity ratio $\gamma$ the \textsc{Greedy} algorithm returns a set $S$ such that 
\[
    f(S) \ge \frac{0.4\gamma^2}{\sqrt{\rho\gamma}+1} \max_{\Omega \in \mathcal{M}}f(\Omega)
\]
\end{theorem}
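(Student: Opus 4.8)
The plan is to compare the greedy output $S$ against an optimal solution $\Omega$ through a matching between their elements supplied by matroid theory. I take $S$ to be a basis of $\mathcal{M}$ (\textsc{Greedy} stops only at a maximal independent set, so $|S|=\rho$), and by monotonicity I may extend $\Omega$ to a basis as well. Writing $S=\{s_1,\dots,s_\rho\}$ in the order greedy selected them and $S_i=\{s_1,\dots,s_i\}$, I would first invoke the strong basis-exchange property: there is a bijection $\phi\colon\Omega\to S$ with $(S\setminus\{\phi(o)\})\cup\{o\}\in\mathcal{M}$ for every $o\in\Omega$, and this can be arranged compatibly with the greedy order, so that each $o$ is a legal candidate at the step $i$ where $\phi(o)=s_i$ was chosen. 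Greedy optimality at that step then gives the pointwise domination $\Delta(o\mid S_{i-1})\le\Delta(s_i\mid S_{i-1})=:g_i$.

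Next I would use the submodularity ratio to pass from a statement about the single set $\Omega$ to a sum of singleton marginals. Applying the definition of $\gamma$ with base set $S$,
\[
f(\Omega)-f(S)\le f(\Omega\cup S)-f(S)=\Delta(\Omega\setminus S\mid S)\le\tfrac1\gamma\sum_{o\in\Omega}\Delta(o\mid S),
\]
so it suffices to control $\sum_{o}\Delta(o\mid S)$ in terms of the greedy value $f(S)=\sum_i g_i$.

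The main obstacle is exactly this last sum: because $f$ is not submodular, the final marginal $\Delta(o\mid S)$ can exceed the dominated quantity $\Delta(o\mid S_{i-1})$, and \emph{no} curvature assumption is available to relate them. I would handle this by splitting $\Delta(o\mid S)=\Delta(o\mid S_{i-1})+\big[\Delta(o\mid S)-\Delta(o\mid S_{i-1})\big]$, bounding the first part by $g_i$ and bounding the accumulated ``growth'' part again through $\gamma$ applied to the elements added after step $i$. This yields an inequality in which the right-hand side, after a Cauchy--Schwarz estimate over the $\rho$ steps, is bounded by a multiple of $\sqrt{\rho}\,\big(\sum_i g_i^2\big)^{1/2}$, and $\sum_i g_i^2\le f(S)\max_i g_i$ with $\max_i g_i$ itself controlled (through the matched optimal element and $\gamma$) by $f(\Omega)$; this is where the factor $\sqrt{\rho\gamma}$ enters the denominator.

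Finally I would set $r=\sqrt{f(S)/f(\Omega)}\in(0,1]$, rewrite the accumulated inequality as a quadratic in $r$, and solve it; optimizing the free constant introduced in the split produces the claimed bound $f(S)\ge\frac{0.4\gamma^2}{\sqrt{\rho\gamma}+1}\,\max_{\Omega\in\mathcal{M}}f(\Omega)$. The delicate point to get right is the amortized bound on the marginal-gain growth together with the balancing constant, since this is the only place where the hypothesis $\rho\ge3$ and the absence of a curvature bound genuinely shape the argument; everything else is standard matroid-greedy bookkeeping.
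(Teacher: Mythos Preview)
The paper does not prove this statement. Theorem~\ref{thm:gat1} appears in Appendix~\ref{app:prior-work-monotone} under the heading ``Previous Results on Maximization of Monotone Set Functions,'' where it is simply quoted from \cite{gatmiry2018non} as background to situate the paper's own contributions (Table~\ref{tab:max-compare}). No proof, proof sketch, or argument is given anywhere in the paper; the result is treated as a black box from the literature. So there is nothing here to compare your proposal against.

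As a separate remark on the proposal itself: the overall shape (basis-exchange bijection, greedy domination $\Delta(o\mid S_{i-1})\le g_i$, then a $\gamma$-based bound on $f(\Omega)-f(S)$) is the standard opening for this kind of result, but the part you flag as ``delicate'' is genuinely underspecified. Your plan to control $\Delta(o\mid S)-\Delta(o\mid S_{i-1})$ by ``applying $\gamma$ to the elements added after step $i$'' does not obviously go through: the submodularity ratio bounds $\Delta(T\mid U)$ from above by $\tfrac1\gamma\sum_{e\in T}\Delta(e\mid U)$, which is the wrong direction for controlling an \emph{increase} in marginal gain, and your subsequent chain $\sum_i g_i^2\le f(S)\max_i g_i$ together with ``$\max_i g_i$ controlled by $f(\Omega)$ through $\gamma$'' does not produce the dependence $\gamma^2/(\sqrt{\rho\gamma}+1)$ without further structure. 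If you want to actually carry this out, you would need to consult the original argument in \cite{gatmiry2018non}, since the present paper provides no guidance.
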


\begin{theorem}[\citet{gatmiry2018non}] \label{thm:gat2}
Given a matroid $\mathcal{M}$ and a monotone set function $f$ with curvature $\alpha$ the \textsc{Greedy} algorithm returns a set $S$ such that 
\[
    f(S) \ge \left(1+\frac{1}{1-\alpha}\right)^{-1} \max_{\Omega \in \mathcal{M}}f(\Omega).
\]
\end{theorem}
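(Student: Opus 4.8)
The plan is to follow the classical Fisher--Nemhauser--Wolsey style analysis of \textsc{Greedy} on a matroid (which gives the factor $\tfrac12$ in the modular/submodular case) and inject the generalized curvature $\alpha$ from Definition~\ref{def:generalizedcurvaturesubmratio} to control the loss coming from non-submodularity. First I would reduce to the case where both the greedy output $S=\{s_1,\dots,s_k\}$ (indexed in selection order, with prefixes $S_t=\{s_1,\dots,s_t\}$) and an optimal solution $O$ are \emph{bases} of $\mathcal{M}$: by monotonicity we may pad $O$ to a base without decreasing $f(O)$, and \textsc{Greedy} terminates at a base. The combinatorial heart is the matroid exchange (Brualdi) bijection: there is a bijection $o_i \mapsto s_i$ pairing each optimal element with a greedy element such that $o_i$ was a \emph{feasible} candidate at the step $s_i$ was chosen, i.e.\ $S_{i-1}\cup\{o_i\}\in\mathcal{M}$. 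Maximality of the greedy choice then yields the pointwise inequality
\[
    \Delta(s_i \mid S_{i-1}) \ge \Delta(o_i \mid S_{i-1}), \qquad i=1,\dots,k,
\]
and summing telescopically gives $f(S)=\sum_i \Delta(s_i\mid S_{i-1}) \ge \sum_i \Delta(o_i\mid S_{i-1})$.

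Next I would pass to the optimum via monotonicity, $f(O)\le f(S\cup O)$, and expand the right-hand side by telescoping over $O\setminus S$ in some order,
\[
    f(S\cup O) - f(S) = \sum_{o_i\in O\setminus S} \Delta\!\left(o_i \mid S \cup O_{<i}\right),
\]
where $O_{<i}$ denotes the optimal elements preceding $o_i$. The goal is to bound this increment by $\tfrac{1}{1-\alpha}\sum_i \Delta(o_i\mid S_{i-1})$, since combining with the greedy sum above would give $f(O)\le f(S)+\tfrac{1}{1-\alpha} f(S) = \bigl(1+\tfrac{1}{1-\alpha}\bigr)f(S)$, which is exactly the claimed ratio $\bigl(1+\tfrac{1}{1-\alpha}\bigr)^{-1}$. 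This is where the generalized curvature enters: reading its definition as a statement about how much a marginal gain can change as the conditioning set grows, I would use it to compare each large-context marginal $\Delta(o_i\mid S\cup O_{<i})$ against the smaller-context greedy marginal $\Delta(o_i\mid S_{i-1})$ appearing on the other side of the greedy inequality.

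The main obstacle is precisely the \emph{direction} of the curvature inequality. As written, generalized curvature only \emph{lower} bounds a larger-context marginal: for $A\subseteq B$ with $e\notin B$ it gives $\Delta(e\mid B)\ge(1-\alpha)\Delta(e\mid A)$, i.e.\ marginals may shrink by at most a factor $1-\alpha$ as context grows. A naive term-by-term comparison therefore produces a lower bound on $f(S\cup O)-f(S)$, the wrong direction for upper-bounding $f(O)$. Making the argument go through will require handling the increment more carefully than term-by-term: one must order $O\setminus S$ and apply the curvature relation so that each optimal element's contribution in its large context is charged to a \emph{smaller}-context marginal that is in turn dominated by a greedy gain, absorbing the discrepancy into the single factor $\tfrac{1}{1-\alpha}$. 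I expect the delicate points to be (i) choosing the expansion order and the conditioning sets so the curvature hypothesis applies with $e=o_i$ lying outside the conditioning set, and (ii) ensuring the curvature factor appears exactly once per element rather than compounding across the telescoping sum; getting these bookkeeping details right, rather than any deep new idea, is what the proof hinges on.
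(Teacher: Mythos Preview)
The paper does not prove this theorem. Theorem~\ref{thm:gat2} is stated in Appendix~\ref{app:prior-work-monotone} purely as a citation of prior work by \citet{gatmiry2018non}; no argument is given or sketched. So there is nothing in the paper to compare your proposal against.

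On the proposal itself: the scaffolding is fine. Reducing to bases, invoking the Brualdi exchange bijection to get a feasible pairing $o_i\leftrightarrow s_i$, the greedy inequality $\Delta(s_i\mid S_{i-1})\ge\Delta(o_i\mid S_{i-1})$, and the passage $f(O)\le f(S\cup O)=f(S)+\sum_i\Delta(o_i\mid S\cup O_{<i})$ are all correct and are exactly the ingredients one expects. You also correctly isolate the crux: the generalized curvature of Definition~\ref{def:generalizedcurvaturesubmratio} only ever yields $\Delta(e\mid B)\ge(1-\alpha)\,\Delta(e\mid A)$ for $A\subseteq B$, which \emph{lower}-bounds the large-context marginal, while you need an \emph{upper} bound on $\Delta(o_i\mid S\cup O_{<i})$ in terms of $\Delta(o_i\mid S_{i-1})$.

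The gap is that your proposed resolution is not a resolution. You write that the difficulty will be handled by ``ordering $O\setminus S$ and applying the curvature relation so that each optimal element's contribution in its large context is charged to a smaller-context marginal,'' and you describe this as ``bookkeeping.'' But the curvature inequality is directionally rigid: for every ordering of the telescoping sum and every choice of conditioning sets, it still only says that marginals do not shrink too much as the context grows. No reordering of $O\setminus S$ turns a lower bound on $\Delta(o_i\mid\text{large})$ into the upper bound you need, and the proposal supplies no concrete mechanism (e.g., applying curvature to the $s_i$'s rather than the $o_i$'s, or routing through a different intermediate set) that would flip the comparison. As written, the argument stalls exactly at the obstacle you yourself identify; calling the missing step ``bookkeeping'' understates it. If you want to complete this, you will need to look at how \citet{gatmiry2018non} actually deploy the curvature hypothesis, because the step is not a routine rearrangement of the standard submodular proof.
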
 

\paragraph{Other measures of approximate submodularity.} 
\citet{JMLR:v9:krause08a} say a function is $\epsilon$ submodular if for all $A \subset B \subset V$, we have that $\Delta(e|A) \ge \Delta(e|B) - \epsilon$. In this case, they proved that with cardinality constraint,  \textsc{Greedy} returns a set $S_m$ such that $f(S_m) \ge (1-e^{-1})\max_{\Omega : |\Omega| \le m}f(\Omega) - m\epsilon$. 
\citet{10.5555/1347082.1347101} study the problem when $f$ is submodular over certain collections of subsets of $V$. Here they provide an approximation result for a greedy algorithm used to solve 
\[  
    \min c(S), \quad \text{subject to: } f(S) \ge C, S \subseteq V,
\]
where $c(S)$ is a non-negative modular function, and $C$ is a constant. 
Finally, \citet{NIPS2016_81c8727c} look at set functions $f$, such that there is a submodular function $g$ with $(1-\epsilon)g(S) \le f(S) \le (1+\epsilon)g(S)$ for all $S \subset V$. They provide results on the sample complexity for querying $f$ as a function of the error level for the cardinality constraint problem. 

\subsection{Maximization of Monotone Functions with Bounded Elementary Submodular Rank} 

We study functions with low elementary submodular rank. 
Recall our Definition~\ref{def:PiAB}:  

\defpiab* 

The sets that contain $i$ are 
$\Pi(\{i\},\{i\})$ and the sets that do not contain $i$ are $\Pi(\emptyset, \{i\})$.

\begin{prop} 
\label{prop:rank1} 
Let $f_i$ be an $\{i\}$-submodular function. 
Then $f_{\{i\},\{i\}}$ and $f_{\emptyset, \{i\}}$ are submodular. 
\end{prop}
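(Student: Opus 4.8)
The plan is to translate $\{i\}$-submodularity of $f_i$ into imset inequalities via Lemma~\ref{lem:facets}, and then to observe that the submodular inequalities required of each restriction form a subset of these. By definition, $f_i$ being $\{i\}$-submodular means that $-f_i$ is $\pi$-supermodular for the tuple $\pi$ whose sign vector $\tau$ satisfies $\tau_i=-1$ and $\tau_k=1$ for $k\neq i$. Lemma~\ref{lem:facets} then gives $\tau_k\tau_l A^{(kl)}(-f_i)\ge 0$ for all $k\neq l$. For any pair $\{k,l\}$ with $i\notin\{k,l\}$ we have $\tau_k\tau_l=1$, so this reads $A^{(kl)}f_i\le 0$; the pairs involving $i$ instead give $A^{(il)}f_i\ge 0$. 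Thus $f_i$ already satisfies every standard submodular imset inequality except those on pairs that include $i$.

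First I would make the restrictions explicit under the identification $\Pi(A,\{i\})\cong 2^{[n]\setminus\{i\}}$ from Definition~\ref{def:PiAB}. Writing $g_1(S):=f_i(S\cup\{i\})$ and $g_0(S):=f_i(S)$ for $S\subseteq[n]\setminus\{i\}$, these are precisely $f_{\{i\},\{i\}}$ and $f_{\emptyset,\{i\}}$. The submodular imset inequality for $g_1$ at a pair $k,l\in[n]\setminus\{i\}$ with background $w\subseteq[n]\setminus\{i,k,l\}$ is
\[
    g_1(w\cup\{k\})+g_1(w\cup\{l\})\ge g_1(w)+g_1(w\cup\{k,l\}),
\]
which, on setting $z:=w\cup\{i\}$, is exactly the row of $A^{(kl)}f_i\le 0$ indexed by the background $z$ (one containing $i$). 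The submodular inequalities for $g_0$ are the remaining rows of $A^{(kl)}f_i\le 0$, those indexed by backgrounds $z=w$ with $i\notin z$. In both cases the pair $\{k,l\}$ avoids $i$.

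Combining the two steps finishes the argument: every submodular imset inequality needed for $g_1$ or $g_0$ is associated to a pair $\{k,l\}$ with $i\notin\{k,l\}$, and for all such pairs we have $A^{(kl)}f_i\le 0$ from the first paragraph. Hence both $f_{\{i\},\{i\}}$ and $f_{\emptyset,\{i\}}$ satisfy all of their submodular inequalities and so are submodular. I expect the only delicate point to be the bookkeeping in the middle step: one must verify that fixing coordinate $i$ to $1$ or $0$ partitions the rows of each matrix $A^{(kl)}$ with $i\notin\{k,l\}$ exactly into the imset inequalities of $g_1$ and $g_0$, and that the pairs involving $i$, on which $f_i$ is only known to be supermodular, never appear after the restriction. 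This is routine but is where the correspondence between the product-order signs and the restricted functions must be tracked carefully.
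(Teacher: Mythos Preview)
Your proof is correct. It differs from the paper's approach, though both are short.

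The paper argues directly from Definition~\ref{def:supermodular}: if $S_1,S_2\in\Pi(\{i\},\{i\})$ then $i\in S_1\cap S_2$, so the reversed order on coordinate $i$ plays no role in computing meets and joins, and $S_1\wedge_\pi S_2=S_1\cap S_2$, $S_1\vee_\pi S_2=S_1\cup S_2$. The $\pi$-submodular inequality for $f_i$ is therefore literally the standard submodular inequality on this piece. The same holds for $\Pi(\emptyset,\{i\})$, where $i\notin S_1\cup S_2$. This is a one-line lattice-theoretic observation that bypasses imsets entirely.

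Your route through Lemma~\ref{lem:facets} and the elementary imset matrices is more algebraic: you compute the sign pattern $\tau_k\tau_l$ to see which blocks $A^{(kl)}f_i$ are $\le 0$, and then match the rows of those blocks to the imset inequalities of the restricted functions $g_0,g_1$. This works perfectly and has the advantage that it makes the bookkeeping explicit, which is useful later when one wants the converse direction in Proposition~\ref{prop:submodular-restriction} (characterizing the Minkowski sum by exactly which $A^{(kl)}$ blocks survive). The paper's argument, on the other hand, is shorter and makes the conceptual reason transparent: fixing coordinate $i$ collapses the difference between the two partial orders.
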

\begin{proof} 
For all $S_1, S_2 \in \Pi(\{i\},\{i\})$, we know that $i \in S_1, S_2$. Hence the linear ordering on the $i^{th}$ coordinate does affect the computation of the least upper bound and greatest lower bound. Thus, $S_1 \wedge S_2 = S_1 \cap S_2$ and $S_1 \vee S_2 = S_1 \cup S_2$. Hence the submodularity inequalities from Definition \ref{def:supermodular} hold. Similarly, for $S_1, S_2 \in \Pi(\emptyset,\{i\})$, we have $i \not\in S_1, S_2$. Hence $S_1 \wedge S_2 = S_1 \cap S_2$ and $S_1 \vee S_2 = S_1 \cup S_2$. 
\end{proof}

With this, we can now prove our Proposition~\ref{prop:submodular-restriction}:  

\propsubmodularrestriction* 

\begin{proof}
If $f$ has such a decomposition, then the fact that the pieces $f_{A,B}$ are submodular follows from Proposition \ref{prop:rank1}, using that a sum of $(1, \ldots, 1)$-submodular functions is $(1, \ldots, 1)$-submodular. For the converse, assume that $f_{A,B}$ is submodular for (without loss of generality) $B = \{1, \ldots, r\}$ and any $A \subseteq B$. Then $f \in \mathcal{L}_{\xi}$, where $\xi_{ij} = -1$ for all $i \neq j$ with $i, j \geq r+1$. The cone $\mathcal{L}_\xi$ is a sum of $-\mathcal{L}_{(1, \ldots, 1)}$ and the $\{ i\}$-submodular cones for all $i \leq r$.
\end{proof}

We can now prove Proposition~\ref{prop:restricted-alpha-gamma}: 

\proprestrictedalphagamma* 

\begin{proof}[Proof of Proposition~\ref{prop:restricted-alpha-gamma}]
    This follows from Definitions~\ref{def:generalizedcurvaturesubmratio} and \ref{def:alpha-gamma-min-max}, as in restriction $f_{A,B}$ we have fewer sets $S, T$, in the definition, for which in the inequality in the definitions need to hold. 
\end{proof}

We are now ready to prove our Theorem~\ref{thm:main-result}: 

\theoremrsplit* 

\begin{proof}[Proof of Theorem~\ref{thm:main-result}] 
We first discuss the computational cost. 
If we run $\mathcal{A}$ on $f$, we get an approximation ratio of at least $R(\alpha, \gamma)$, at a cost of $O(q(n))$. 
For \textsc{r-Split}, there are $\binom{n}{r} = O(n^r)$ sets $B\subset V$ of cardinality $r$. For each, there are $2^r$ subsets $A \subseteq B$. 
Hence we have $2^r\binom{n}{r}$ possible $\Pi(A,B)$. On each, we run $\mathcal{A}$, which runs in $O(q(n-|A|))$, because $f_{A,B}$ can be regarded as a function on $2^{V\setminus B}$. 
The final step is to pick the optimal value among the solutions returned for each subproblem, which can be done in $O(2^r\binom{n}{r})$ time. 
Hence the overall cost is $O(q(n)) + 2^r\binom{n}{r} O(q(n-|A|))+O(2^r\binom{n}{r}) = O(2^r n^r q(n))$. 

Now we discuss the approximation ratio. 
The solution returned for $f_{A,B}$ has approximation ratio $R(\alpha_{A,B},\gamma_{A,B})$, by the assumed properties of $\mathcal{A}$. 
Since $f$ has elementary submodular rank $r+1$,there exist submodular $f_0$ and elementary submodular $f_{i_1}, \ldots, f_{i_{r}}$ such that $f = f_0 + f_{i_1} + \cdots + f_{i_{r}}$. 
Let $B_f = \{i_1, \ldots, i_{r}\}$.  
Then, for this set $B_f$ and any $A \subseteq B_f$, we know that $f_{A,B_f}$ is submodular, by Proposition~\ref{prop:submodular-restriction}, and hence
$\gamma_{A,B_f} = 1$.  
Picking the optimum value among the solutions returned for the subproblems involving $B_f$ ensures an overall approximation ratio with $\min_{A\subseteq B_f}\gamma(A,B_f) = 1$, that is, $R(\alpha_r,1)$. 
In summary, we are guaranteed to obtain a final solution with approximation ratio $\max \{ R(\alpha,\gamma), R(\alpha_r,1) \}$. 
\end{proof}

\begin{rem}
With knowledge of the elementary cones involved in the decomposition of $f$, the set $B_f$ from the proof of Theorem~\ref{thm:main-result}, we only need to consider the subproblems that involve $B_f$. This gives $2^r$ subproblems instead of $2^r\binom{n}{r}$.
\end{rem}

Let us now instantiate corollaries of Theorem~\ref{thm:main-result}. 
We fix the elementary rank to be $r+1$. The runtime will be  exponential in $r$ but polynomial in $n$. 

\begin{cor} 
\label{cor:non-oblivious} If $f$ has elementary submodular rank $r+1$, then for the cardinality constrained problem, for all $\epsilon > 0$, the approximation ratio for the \textsc{r Split Greedy} is  $\max(R(\alpha, \gamma)$, $(1-\hat{\alpha} e^{-1}) - O(\epsilon))$. The algorithm runs in $O(\epsilon^{-1} 2^rn^r \cdot poly(n))$.
\end{cor}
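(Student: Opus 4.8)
The plan is to instantiate the generic subroutine $\mathcal{A}$ in Theorem~\ref{thm:main-result} with the \textsc{Non-Oblivious Local Search Greedy} algorithm of Theorem~\ref{thm:svidernko} and then read off the resulting guarantee. First I would check that this algorithm meets the hypotheses required of $\mathcal{A}$: it runs in $O(\epsilon^{-1} poly(m))$ time on a function over a ground set of size $m$, so its query complexity $q(m)$ is polynomial in $m$ (with an extra $\epsilon^{-1}$ factor carried through); and for a monotone non-negative \emph{submodular} input it returns a set of approximation ratio $1 - \hat{\alpha} e^{-1} + O(\epsilon)$, where $\hat{\alpha}$ is the total curvature (Definition~\ref{def:total-curvature}). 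Since a cardinality constraint is a uniform matroid, Theorem~\ref{thm:svidernko} applies directly to the cardinality-constrained problem considered here.

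Next I would apply Theorem~\ref{thm:main-result} with this choice of $\mathcal{A}$. The theorem then guarantees a solution of approximation ratio $\max\{R(\alpha,\gamma), R(\alpha_r, 1)\}$ in time $O(2^r n^r q(n)) = O(\epsilon^{-1} 2^r n^r \cdot poly(n))$, matching the claimed complexity. The first term $R(\alpha,\gamma)$ is what the subroutine returns when run directly on $f$. The second term $R(\alpha_r, 1)$ is the guarantee on the split where $\gamma = 1$: by Proposition~\ref{prop:submodular-restriction}, the decomposition realizing elementary submodular rank $r+1$ singles out a set $B_f$ with $|B_f| = r$ such that every piece $f_{A, B_f}$ is submodular, and \textsc{r-Split} evaluates exactly this split among all $2^r \binom{n}{r}$ subproblems. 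On each such submodular piece Sviridenko's guarantee reads $1 - \hat{\alpha}_{A,B_f} e^{-1} - O(\epsilon)$, which, after the best subproblem is selected, produces the stated second term.

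The one point needing care is the translation of total curvature between $f$ and its submodular restrictions $f_{A,B_f}$, so that the bound can be written with a single $\hat{\alpha}$. In the same spirit as Proposition~\ref{prop:restricted-alpha-gamma} (which handles the submodularity ratio and generalized curvature), I would verify that restricting to $\Pi(A,B_f)$ cannot increase the total curvature, i.e.\ $\hat{\alpha}_{A,B_f} \le \hat{\alpha}$: the restriction only shrinks the index set $\Omega$ and the family of sets over which the maximizing ratio in Definition~\ref{def:total-curvature} is taken. Granting this monotonicity, the second term is at least $(1 - \hat{\alpha} e^{-1}) - O(\epsilon)$, and combined with the direct run on $f$ we obtain the ratio $\max\{R(\alpha,\gamma), (1 - \hat{\alpha} e^{-1}) - O(\epsilon)\}$.

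I expect the main obstacle to be precisely this curvature-monotonicity check under restriction, together with confirming that applying \textsc{Non-Oblivious Local Search Greedy} to the \emph{non}-submodular pieces (for the ``wrong'' choices of $B$) does no harm. The algorithm remains well-defined on any monotone non-negative function, and since \textsc{r-Split} returns the best set seen over all subproblems, the guarantee is driven entirely by the single submodular split $B_f$; the other splits can only improve the value returned, never degrade the bound.
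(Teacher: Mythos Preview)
Your approach is essentially the paper's: instantiate Theorem~\ref{thm:main-result} with \textsc{Non-Oblivious Local Search Greedy} as the subroutine and invoke Theorem~\ref{thm:svidernko} for the guarantee. The paper's proof is a single line to this effect; your additional checks (total-curvature monotonicity under restriction, and well-definedness of the subroutine on non-submodular pieces) go beyond what the paper actually supplies, but they do not alter the underlying strategy.
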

\begin{proof}
    Use \textsc{Non-Oblivious Local Search Greedy} as the subroutine and Theorem \ref{thm:svidernko} for the guarantee.
\end{proof}

\begin{cor} 
\label{cor:cardinality} If $f$ is a non-negative monotone function with submodularity ratio $\gamma$, curvature $\alpha$, and elementary submodular rank $r+1$, then for the cardinality constrained problem, the \textsc{r Split Greedy} algorithm has an approximation ratio of $\alpha_r^{-1}(1-e^{-\alpha_r})$ and runs in $O(2^rn^r \cdot nm)$ time.  
\end{cor}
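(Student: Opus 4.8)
The plan is to obtain Corollary~\ref{cor:cardinality} as a direct instantiation of the general guarantee in Theorem~\ref{thm:main-result}, taking \textsc{Greedy} (Algorithm~\ref{alg:greedy}) as the subroutine $\mathcal{A}$ and importing its approximation guarantee from \citet{bian2017guarantees} (Theorem~\ref{thm:bian}). First I would verify the two hypotheses Theorem~\ref{thm:main-result} imposes on $\mathcal{A}$. For the query complexity: with a cardinality-$m$ constraint, \textsc{Greedy} runs $m$ rounds and in each round evaluates the discrete derivative $\Delta(e\mid S_k)$ for at most $n$ candidate elements, so it makes $O(nm)$ value queries and we may take $q(n)=nm$. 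For the approximation ratio: Theorem~\ref{thm:bian} states that on any monotone non-negative $g$ with generalized curvature $\alpha$ and submodularity ratio $\gamma$, \textsc{Greedy} returns a $\tfrac{1}{\alpha}(1-e^{-\alpha\gamma})$-approximation, so we set $R(\alpha,\gamma)=\tfrac{1}{\alpha}(1-e^{-\alpha\gamma})$. A cardinality constraint is a uniform matroid, and each restriction $f_{A,B}$ is optimized under the induced cardinality constraint $|S'|\le m-|A|$ on the ground set $V\setminus B$, so both theorems apply to every subproblem.

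Next I would apply Theorem~\ref{thm:main-result} verbatim. Since $f$ has elementary submodular rank $r+1$, the theorem gives that \textsc{r-Split} with subroutine \textsc{Greedy} runs in time $O(2^r n^r q(n))=O(2^r n^r\cdot nm)$ and returns a solution with approximation ratio $\max\{R(\alpha,\gamma),\,R(\alpha_r,1)\}$. Substituting $R$ yields $R(\alpha_r,1)=\tfrac{1}{\alpha_r}(1-e^{-\alpha_r})$, which is exactly the claimed ratio, and the running time already matches the statement.

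It then remains to check that $\tfrac{1}{\alpha_r}(1-e^{-\alpha_r})$ is the dominant term, so that the reported value is accurate. I would use $\alpha_r=\min_{B}\max_{A}\alpha_{A,B}\le\alpha$, which follows from Proposition~\ref{prop:restricted-alpha-gamma} (giving $\alpha_{A,B}\le\alpha$) and Definition~\ref{def:alpha-gamma-min-max}, together with $\gamma\le 1$ for monotone $f$ (Remark~\ref{rem:properties-alpha-gamma}). A short monotonicity check on $R$ then finishes the argument: writing $R(\alpha,\gamma)=(1-e^{-\alpha\gamma})/\alpha$ and $u=\alpha\gamma$, the derivative in $\alpha$ has the sign of $e^{-u}(u+1)-1\le 0$, so $R$ is nonincreasing in its first argument, while it is plainly increasing in $\gamma$. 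Hence $R(\alpha_r,1)\ge R(\alpha_r,\gamma)\ge R(\alpha,\gamma)$, and the maximum collapses to $\tfrac{1}{\alpha_r}(1-e^{-\alpha_r})$.

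The argument is essentially bookkeeping: the genuine content resides in Theorem~\ref{thm:main-result} (which supplies the split into $2^r$ submodular pieces via Proposition~\ref{prop:submodular-restriction} and the fact that one split achieves $\gamma=1$) and in Theorem~\ref{thm:bian}. The only step needing a little care — rather than a real obstacle — is the monotonicity of $R$ used to simplify the $\max$; if one prefers to avoid it, the step can be dropped entirely and $\tfrac{1}{\alpha_r}(1-e^{-\alpha_r})$ reported as a valid lower bound on the $\max$ that Theorem~\ref{thm:main-result} already guarantees.
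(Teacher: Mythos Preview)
Your proposal is correct and follows the same approach as the paper: instantiate Theorem~\ref{thm:main-result} with \textsc{Greedy} as the subroutine and invoke Theorem~\ref{thm:bian} for the ratio $R(\alpha,\gamma)=\tfrac{1}{\alpha}(1-e^{-\alpha\gamma})$. You actually supply more detail than the paper does---in particular, your monotonicity argument showing $R(\alpha_r,1)\ge R(\alpha,\gamma)$ (so the $\max$ collapses) and your remark about the induced cardinality constraint $|S'|\le m-|A|$ on each piece are points the paper's one-line proof leaves implicit.
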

\begin{proof}
    Use \textsc{Greedy} as the subroutine and Theorem \ref{thm:bian} for the guarantee. 
\end{proof}

\begin{cor} 
\label{cor:matroid} For a non-negative monotone function $f$ and elementary submodular rank $r+1$ for the matroid constrained problem the \textsc{r Split Greedy} has an approximation ratio of $(1-e^{-1})$ and runs in $\tilde{O}(2^rn^r \cdot n^8)$ time.
\end{cor}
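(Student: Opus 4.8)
The plan is to obtain this as a direct instantiation of Theorem~\ref{thm:main-result}, taking the subroutine $\mathcal{A}$ to be the randomized continuous-greedy algorithm of \citet{Clinescu2011MaximizingAM} recorded in Theorem~\ref{thm:matorid-sub}. That algorithm is designed for matroid-constrained maximization of monotone, non-negative \emph{submodular} functions, on which it achieves a $(1-e^{-1})$-approximation in expectation using $\tilde{O}(m^8)$ value queries on a ground set of size $m$. In the language of Theorem~\ref{thm:main-result} this means we may take $q(m) = \tilde{O}(m^8)$ and set the abstract ratio function to $R(\cdot,1) = 1 - e^{-1}$, since the Călinescu guarantee depends only on submodularity ($\gamma = 1$) and not on the generalized curvature $\alpha$. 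For $\gamma < 1$ the subroutine carries no useful guarantee, so in the bound $\max\{R(\alpha,\gamma),\,R(\alpha_r,1)\}$ from Theorem~\ref{thm:main-result} only the second term is relevant here.

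First I would check that $\mathcal{A}$ satisfies the hypotheses of Theorem~\ref{thm:main-result} with these choices, which is immediate from Theorem~\ref{thm:matorid-sub}. Then I would invoke Theorem~\ref{thm:main-result}: because $f$ has elementary submodular rank $r+1$, Proposition~\ref{prop:submodular-restriction} supplies a set $B_f$ with $|B_f| = r$ such that every piece $f_{A,B_f}$, for $A \subseteq B_f$, is submodular. On this particular split \textsc{r-Split} runs $\mathcal{A}$ only on submodular functions, so $\gamma_{A,B_f} = 1$ for all $A \subseteq B_f$ and the guaranteed ratio on these subproblems is $R(\alpha_r,1) = 1 - e^{-1}$.

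The approximation claim then follows because the pieces $\{C : C \cap B_f = A\}$, ranging over $A \subseteq B_f$, partition $2^V$; the global optimum lies in exactly one piece, and the $(1-e^{-1})$-guarantee of $\mathcal{A}$ on that submodular piece transfers to the best-over-pieces output that \textsc{r-Split} returns. For the running time, Theorem~\ref{thm:main-result} gives $O(2^r n^r q(n)) = \tilde{O}(2^r n^r \cdot n^8)$, as claimed, and since $\mathcal{A}$ is randomized the bound holds in expectation, matching the starred entry in Table~\ref{tab:max-compare}.

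The one point needing care — and the main (minor) obstacle — is that when we restrict to a piece $f_{A,B}$ the matroid constraint must descend to a genuine matroid on the reduced ground set $V \setminus B$. Fixing the coordinates in $B$ according to $A$ corresponds to contracting the elements of $A$ and deleting the elements of $B \setminus A$; since matroid minors are again matroids, $\mathcal{A}$ applies verbatim to each subproblem. All remaining bookkeeping is routine and is inherited from the proof of Theorem~\ref{thm:main-result}.
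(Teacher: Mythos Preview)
Your approach is the same as the paper's: instantiate Theorem~\ref{thm:main-result} with the C\u{a}linescu--Chekuri--P\'al--Vondr\'ak algorithm as the subroutine $\mathcal{A}$. Your observation about matroid minors (that restricting to $\Pi(A,B)$ corresponds to contracting $A$ and deleting $B\setminus A$, which preserves the matroid property) is a good point the paper leaves implicit.

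There is, however, a genuine gap. You write that verifying the hypotheses of Theorem~\ref{thm:main-result} is ``immediate from Theorem~\ref{thm:matorid-sub}'', but it is not. The hypothesis of Theorem~\ref{thm:main-result} requires that $\mathcal{A}$ make $O(q(m))$ queries on \emph{any} monotone non-negative function, not only submodular ones. Theorem~\ref{thm:matorid-sub} gives the $\tilde{O}(m^8)$ bound only for submodular inputs. Since \textsc{r-Split} runs $\mathcal{A}$ on $f_{A,B}$ for \emph{every} $B$ of size $r$ and on $f$ itself---most of which are not submodular---you need to know that the continuous-greedy\,/\,pipage-rounding procedure still terminates in $\tilde{O}(m^8)$ time when fed a non-submodular objective. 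This is exactly the point the paper's proof spends its effort on: it argues separately that \textsc{Continuous Greedy} runs a fixed number of iterations regardless of submodularity and always returns a point in the matroid base polytope, and that \textsc{Pipage Round} terminates in polynomial time on any such point. Without this argument your running-time claim $\tilde{O}(2^r n^r\cdot n^8)$ does not follow.
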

\begin{proof}
    Use the algorithm from \citet{Clinescu2011MaximizingAM} and Theorem \ref{thm:matorid-sub} for the guarantee.
It remains to show that the procedure from \cite{Clinescu2011MaximizingAM} terminates in $\tilde{O}(n^8)$, even if the input is not submodular.
The algorithm from \cite{Clinescu2011MaximizingAM} consists of two steps. First is the \textsc{Continuous Greedy} algorithm. Second, is \textsc{Pipage Round}~(\citet{Ageev2004PipageRA}). From \cite{Clinescu2011MaximizingAM} we have that \textsc{Continuous Greedy} terminates after a fixed number of steps. This would be true even if the input function $f$ is not submodular and always returns a point in the base polytope of the matroid. Second, using \cite[Lemma 3.5]{Clinescu2011MaximizingAM}, we see that \textsc{Pipage Round}, terminates in polynomial time for any point in the base matroid of the polytope. 
\end{proof}

In Table~\ref{tab:max-compare} we summarize the results from Corollaries \ref{cor:non-oblivious}, \ref{cor:cardinality}, and~\ref{cor:matroid} 
and how they compare with previous results given above in Theorems~\ref{thm:matorid-sub}, \ref{thm:svidernko}, \ref{thm:bian}, \ref{thm:chen}, \ref{thm:gat1}, and \ref{thm:gat2}. 

\subsection{Previous Results on Minimization of Ratios of Set Functions} 
\label{app:previous-works-ratio}

For Problem~\ref{prob:ratio} of minimizing ratios of set functions, \textsc{Ratio Greedy} from Algorithm~\ref{alg:ratiogreedy} has the following guarantees. 

\begin{theorem}[\citet{pmlr-v48-baib16}] 
\label{thm:29} 
For the ratio of set function minimization problem $\min f/g$, \textsc{Greedy Ratio} has the following approximation ratios:  
\begin{enumerate}
    \item If $f,g$ are modular, then it finds the optimal solution. 
    \item If $f$ is modular and $g$ is submodular, then it finds a $1-e^{-1}$ approximate solution. 
    \item If $f$ and $g$ are submodular, then it finds a $1/(1-e^{\hat{\alpha}^f-1})$ approximate solution, where $\hat{\alpha}^f$ is the total curvature of $f$. 
\end{enumerate}
\end{theorem}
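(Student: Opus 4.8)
The plan is to handle the three cases in increasing order of difficulty and to obtain cases 2 and 3 from a single submodular–maximization argument that is modulated by the total curvature of $f$.

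\textbf{Case 1 (both modular).} Writing $f(S)=\sum_{e\in S}f(\{e\})$ and $g(S)=\sum_{e\in S}g(\{e\})$ for normalized modular $f,g$, the mediant (averaging) inequality gives $\frac{\sum_{e\in S}f(\{e\})}{\sum_{e\in S}g(\{e\})}\ge \min_{e\in S}\frac{f(\{e\})}{g(\{e\})}\ge \min_{e\in V}\frac{f(\{e\})}{g(\{e\})}$, with equality attained at the minimizing singleton. Hence the optimum of $\min_S f(S)/g(S)$ is a singleton $\{e^\ast\}$. Since \textsc{Ratio Greedy} starts at $S_0=\emptyset$ and its first step selects $u=\argmin_v f(\{v\})/g(\{v\})=e^\ast$, the ratio $f(S_1)/g(S_1)$ already equals the optimum $\beta^\ast$, so recording the best ratio along the chain recovers it.

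\textbf{Cases 2 and 3.} Fix an optimal $X^\ast$ with $\beta^\ast=f(X^\ast)/g(X^\ast)$. The engine is the Nemhauser-style greedy recursion for the monotone submodular function $g$: along any greedy chain $S_0\subset S_1\subset\cdots$, submodularity forces $g(X^\ast)-g(S_i)$ to shrink geometrically in the marginal gains $\Delta_g(\,\cdot\mid S_i)$, which is the source of the $1-e^{-1}$ factor. When $f$ is modular, minimizing $f/g$ is the parametric (Lagrangian) counterpart of maximizing $g$ under a budget on the modular cost $f$, and feeding the $(1-e^{-1})$ maximization guarantee through this correspondence shows some chain set has ratio within $1/(1-e^{-1})$ of $\beta^\ast$, giving Case 2. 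For Case 3 I would \emph{not} merely sandwich $f$ between $(1-\hat\alpha^f)\sum_{e}f(\{e\})$ and $\sum_{e}f(\{e\})$ — that yields only a multiplicative prefactor $1/(1-\hat\alpha^f)$, not the claimed form — but instead inject the curvature inequality $\Delta_f(e\mid S)\ge(1-\hat\alpha^f)f(\{e\})$ into \emph{each step} of the recursion, so that the discount on the numerator's marginal cost is carried through the geometric telescoping; this converts the exponent $-1$ into $\hat\alpha^f-1$, exactly as curvature refines the greedy bound in the Conforti–Cornuéjols line, producing $1/(1-e^{\hat\alpha^f-1})$.

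\textbf{Main obstacle.} The difficulty is that \textsc{Ratio Greedy} selects by the \emph{joint} ratio $f(S_i\cup v)/g(S_i\cup v)$ rather than by a single submodular objective, so the clean telescoping of a one-function greedy proof does not apply verbatim. The crux is to identify the iteration $i$ at which the chain ratio is certifiably near-optimal while controlling numerator and denominator simultaneously: bounding $g(X^\ast)-g(S_i)$ above by greedy marginal gains (submodularity of $g$) and, in the same recursion, discounting the growth of $f$ by the curvature factor $(1-\hat\alpha^f)$ at every step. Getting the curvature to enter the \emph{exponent} rather than as a crude prefactor — i.e. integrating the per-step discount into the geometric estimate across the whole chain — is the delicate part and the step I expect to require the most care.
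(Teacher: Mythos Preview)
The paper does not prove this statement. Theorem~\ref{thm:29} is quoted verbatim from \citet{pmlr-v48-baib16} as prior work in Appendix~\ref{app:previous-works-ratio}, and the paper invokes it only as a black box inside the proofs of Theorems~\ref{thm:main-result2a} and~\ref{thm:main-result2}. There is therefore nothing in the paper to compare your proposal against.

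As to the proposal itself: your Case~1 argument via the mediant inequality is clean and correct. Your outline for Cases~2 and~3 points at the right ingredients --- a Nemhauser-style geometric recursion for the submodular denominator $g$, and a per-step curvature discount on the numerator $f$ in the spirit of Conforti--Cornu\'ejols --- and you correctly flag the genuine difficulty, namely that \textsc{Ratio Greedy} chooses $u$ by the joint ratio $f(S_i\cup\{v\})/g(S_i\cup\{v\})$ rather than by a marginal of a single function, so the one-function telescoping does not transfer directly. What is missing from your sketch is the concrete bridge: in the original argument one shows that the ratio-greedy choice implies an inequality of the form $\Delta_f(u\mid S_i)\le \tfrac{f(X^\ast)}{g(X^\ast)}\,\Delta_g(u\mid S_i)$ (or a curvature-weighted variant thereof) at the selected element, and it is this coupling inequality that lets the two recursions be run in tandem. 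Without stating and proving that link, the passage from ``inject curvature at each step'' to ``the exponent becomes $\hat\alpha^f-1$'' remains a plausibility claim rather than a proof.
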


To quantify the approximation ratio of the \textsc{Greedy Ratio} algorithm, we recall the definitions of generalized inverse curvature (Definition \ref{def:inverse-curvature}) as well as alternative notions of the submodularity ratio (Definition \ref{def:generalizedcurvaturesubmratio}) and curvature (Definition \ref{def:inverse-curvature}). 

Main results that give guarantees for the ratio of submodular function minimization are as follows. 

\begin{theorem}[\citet{10.5555/3172077.3172251}] 
\label{thm:33}
For minimizing the ratio $f/g$ where $f$ is a positive monotone submodular function and $g$ is a positive monotone function, \textsc{Greedy Ratio} finds a subset $X \subseteq V$ with 
\[
    \frac{f(X)}{g(X)} \le \frac{1}{\gamma_{\emptyset,|X^*|}^g}\frac{|X^*|}{1+(|X^*|-1)(1-\hat{c}^f(X^*))} \frac{f(X^*)}{g(X^*)},
\]
where $X^*$ is the optimal solution and $\gamma^g$ is the submodularity ratio of $g$. 
\end{theorem}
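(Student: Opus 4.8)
The plan is to reduce the bound to three independent ingredients: a curvature lower bound on the submodular numerator $f$, a submodularity-ratio lower bound on the denominator $g$, and the elementary observation that \textsc{Greedy Ratio} does at least as well as the best singleton. Throughout I would assume $f(\emptyset)=0$ (normalization), consistent with the rest of the paper, and write $k=|X^*|$.

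First I would establish a curvature lemma for the monotone submodular $f$: for $X^* = \{e_1,\ldots,e_k\}$,
\[
f(X^*) \ge \frac{1+(k-1)(1-\hat c^f(X^*))}{k}\sum_{e\in X^*} f(\{e\}).
\]
To prove this, fix any ordering and telescope $f(X^*) = \sum_i \bigl(f(S_i)-f(S_{i-1})\bigr)$ with $S_i=\{e_1,\ldots,e_i\}$; submodularity gives $f(S_i)-f(S_{i-1}) \ge f(X^*)-f(X^*\setminus\{e_i\})$ for $i\ge 2$, while the $i=1$ term equals $f(\{e_1\})$. Averaging over the $k$ choices of which element plays the role of $e_1$, and using $\sum_{e\in X^*}\bigl(f(X^*)-f(X^*\setminus\{e\})\bigr)=(1-\hat c^f(X^*))\sum_{e\in X^*} f(\{e\})$ from Definition~\ref{def:inverse-curvature}, collapses the average into exactly the constant above. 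Second, I would invoke the submodularity ratio of $g$: taking $T=\emptyset$ and $S=X^*$ in Definition~\ref{def:generalizedcurvaturesubmratio} yields $\sum_{e\in X^*} g(\{e\}) \ge \gamma^g_{\emptyset,k}\, g(X^*)$.

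Third, I would bound the greedy output. Because $S_0=\emptyset$, the first element selected is $u_1=\argmin_{v} f(\{v\})/g(\{v\})$, so the chain produced by \textsc{Greedy Ratio} contains the singleton $\{u_1\}$ with ratio $\min_v f(\{v\})/g(\{v\})$; since the algorithm reports the best ratio seen along its chain, its output $X$ satisfies $f(X)/g(X)\le \min_v f(\{v\})/g(\{v\})$. A mediant inequality then gives $\min_v f(\{v\})/g(\{v\}) \le \sum_{e\in X^*} f(\{e\})\big/\sum_{e\in X^*} g(\{e\})$: writing $m$ for the minimum, $f(\{e\})\ge m\,g(\{e\})$ for every $e$, and $g(\{e\})>0$ by positivity, so summing over $X^*$ preserves the inequality. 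Chaining all three bounds gives
\[
\frac{f(X)}{g(X)} \le \frac{\sum_{e\in X^*} f(\{e\})}{\sum_{e\in X^*} g(\{e\})} \le \frac{1}{\gamma^g_{\emptyset,|X^*|}}\cdot \frac{|X^*|}{1+(|X^*|-1)(1-\hat c^f(X^*))}\cdot\frac{f(X^*)}{g(X^*)},
\]
as claimed.

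The main obstacle is the curvature lemma: getting precisely the constant $\tfrac{|X^*|}{1+(|X^*|-1)(1-\hat c^f(X^*))}$ rather than the cruder $(1-\hat c^f)^{-1}$ bound requires the averaging-over-orderings trick, where the cross terms $f(\{e\})-\bigl(f(X^*)-f(X^*\setminus\{e\})\bigr)$ reassemble into the curvature quantity. The submodularity-ratio and greedy steps are routine, and the only modeling care needed is to read \textsc{Greedy Ratio} as returning the best ratio along its chain (which is what the \textsc{r-Split} variants do via ``best seen set'').
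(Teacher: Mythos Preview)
The paper does not prove Theorem~\ref{thm:33}; it is quoted from \citet{10.5555/3172077.3172251} as prior work and used as a black box in the proof of Theorem~\ref{thm:main-result2a}. So there is no ``paper's own proof'' to compare against.

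Your argument is essentially the standard one and is correct. The curvature lemma is the only nontrivial step, and your averaging-over-orderings derivation checks out: for each choice of first element $e_j$ one has $f(X^*)\ge f(\{e_j\})+\sum_{i\ne j}\bigl(f(X^*)-f(X^*\setminus\{e_i\})\bigr)$ by submodularity, and averaging over $j$ together with the identity $\sum_i\bigl(f(X^*)-f(X^*\setminus\{e_i\})\bigr)=(1-\hat c^f(X^*))\sum_i f(\{e_i\})$ gives exactly the constant you state. The submodularity-ratio step and the mediant bound are routine. Your caveat about \textsc{Greedy Ratio} returning the best ratio along its chain is the right reading: although Algorithm~\ref{alg:ratiogreedy} as typeset returns the last $S_i$, the cited sources (and the \textsc{r-Split} wrappers in this paper) take the best seen set, and the bound as stated requires that convention.
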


\begin{theorem}[\citet{Wang2019MinimizingRO}] 
\label{thm:34}

For minimizing the ratio $f/g$ where $f$ and $g$ are normalized non-negative monotone set functions, \textsc{Greedy Ratio} outputs a subset $X \subseteq V$, such that 
\[
    \frac{f(X)}{g(X)} \le \frac{1}{\gamma^g_{\emptyset,|X^*|}}\frac{|X^*|}{1+(|X^*|-1)(1-\alpha^f)(1-\tilde{\alpha}^f)} \frac{f(X^*)}{g(X^*)},
\]
where $X^*$ is the optimal solution, $\gamma^g$ is the submodularity ratio of $g$, and $\alpha^f$ (resp. $\tilde{\alpha}^f$) are the generalized curvature (resp. generalized inverse curvature) of $f$.
\end{theorem}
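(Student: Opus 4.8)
The plan is to show that the claimed bound is already witnessed by the very first element selected by \textsc{Greedy Ratio}, so that the set it returns (the minimum-ratio set encountered along its trajectory, as in the best-seen convention of Algorithm~\ref{alg:modified-ratio-1}) does at least this well. Write $k=|X^*|$ and $X^*=\{e_1,\dots,e_k\}$. Since $f$ and $g$ are normalized, positive and monotone, every singleton value and ratio below is well defined and strictly positive, and the output $X$ satisfies $f(X)/g(X)\le f(S_1)/g(S_1)$ where $S_1=\{u_1\}$ is the first greedy set. It therefore suffices to bound the ratio of $S_1$.

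First I would exploit the first greedy step: with $S_0=\emptyset$ the algorithm picks $u_1=\argmin_{v\in V} f(\{v\})/g(\{v\})$, so in particular $f(\{u_1\})/g(\{u_1\})\le \min_{v\in X^*} f(\{v\})/g(\{v\})$. By the mediant inequality for positive reals this is at most $\big(\sum_{v\in X^*} f(\{v\})\big)\big/\big(\sum_{v\in X^*} g(\{v\})\big)$. I then bound the denominator from below and the numerator from above. For the denominator, applying the definition of the submodularity ratio of $g$ (Definition~\ref{def:generalizedcurvaturesubmratio}) with $T=\emptyset$ and $S=X^*$ gives $\sum_{v\in X^*} g(\{v\}) \ge \gamma^g_{\emptyset,|X^*|}\, g(X^*)$.

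The numerator requires the key lemma $\sum_{v\in X^*} f(\{v\}) \le \frac{k}{\,1+(k-1)(1-\alpha^f)(1-\tilde\alpha^f)\,}\, f(X^*)$. To prove it I would symmetrize the telescoping identity $f(X^*)=\sum_i \Delta^f(e_{\sigma(i)}\mid\{e_{\sigma(1)},\dots,e_{\sigma(i-1)}\})$ over all orderings $\sigma$ of $X^*$, rewriting it as $f(X^*)=\sum_{v\in X^*}\mathbb{E}_\sigma[\Delta^f(v\mid P_\sigma(v))]$ with $P_\sigma(v)$ the predecessors of $v$. For a uniform $\sigma$, $v$ is first with probability $1/k$, where $\Delta^f(v\mid\emptyset)=f(\{v\})$; otherwise $P_\sigma(v)\neq\emptyset$ and I would chain the generalized inverse curvature (to pass from the partial context $P_\sigma(v)$ to the full context $X^*\setminus\{v\}$) and then the generalized curvature (to pass from $X^*\setminus\{v\}$ to $\emptyset$) to obtain $\Delta^f(v\mid P_\sigma(v))\ge (1-\alpha^f)(1-\tilde\alpha^f) f(\{v\})$. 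Taking expectations yields $\mathbb{E}_\sigma[\Delta^f(v\mid P_\sigma(v))]\ge \big(\tfrac1k+\tfrac{k-1}{k}(1-\alpha^f)(1-\tilde\alpha^f)\big)f(\{v\})$, and summing over $v$ gives the lemma after rearranging. Substituting the two bounds into the mediant estimate produces exactly $f(X)/g(X)\le \frac{1}{\gamma^g_{\emptyset,|X^*|}}\cdot\frac{|X^*|}{1+(|X^*|-1)(1-\alpha^f)(1-\tilde\alpha^f)}\cdot\frac{f(X^*)}{g(X^*)}$.

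The main obstacle is the numerator lemma: the first-step and mediant steps and the denominator bound are essentially immediate, whereas obtaining the precise coefficient $\frac{k}{1+(k-1)(1-\alpha^f)(1-\tilde\alpha^f)}$ hinges on correctly weighting the single ``first-position'' term $f(\{v\})$ against the $k-1$ remaining positions via the averaging over orderings, and on invoking both curvature notions in the right direction. I note that the generalized curvature alone already gives $\Delta^f(v\mid P_\sigma(v))\ge(1-\alpha^f)f(\{v\})$ directly; under the best-seen output this would prove a slightly stronger bound with $(1-\alpha^f)$ in place of $(1-\alpha^f)(1-\tilde\alpha^f)$, and retaining the inverse-curvature factor reproduces the stated form.
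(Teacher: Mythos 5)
The paper does not prove this statement---it is quoted as a known result of \citet{Wang2019MinimizingRO} in its survey of prior work---but your reconstruction is correct and follows the same route as the original source (which extends the argument of \citet{10.5555/3172077.3172251} from submodular to general monotone $f$): bound the returned ratio by that of the first singleton $S_1$ under the best-seen convention, apply the mediant inequality over $X^*$, lower-bound $\sum_{v\in X^*} g(\{v\}) \ge \gamma^g_{\emptyset,|X^*|}\, g(X^*)$ directly from Definition~\ref{def:generalizedcurvaturesubmratio}, and upper-bound $\sum_{v\in X^*} f(\{v\})$ by the symmetrized telescoping expansion in which the first-position term contributes $f(\{v\})$ exactly and the remaining positions are controlled by chaining the generalized inverse curvature ($P_\sigma(v)\to X^*\setminus\{v\}$) with the generalized curvature ($X^*\setminus\{v\}\to\emptyset$). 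Your closing observation is also accurate under the paper's conventions: taking $S=\{v\}$ and $T=P_\sigma(v)$ in the printed definition of generalized curvature yields $\Delta(v\mid P_\sigma(v))\ge(1-\alpha^f)f(\{v\})$ in one step, so the stated constant is not tight as the definitions appear here---which is harmless, since the weaker stated bound is what must be proved.
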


\subsection{Minimization of Ratios of Set Functions with Bounded Elementary Rank} 

We now formulate results for our \textsc{r-Split} with \textsc{Greedy Ratio} subroutine. 

\rsplitgreedyratio* 

\begin{proof}[Proof of Theorem~\ref{thm:main-result2a}]
The statement follows from analogous arguments to those in the proof of Theorem~\ref{thm:main-result}. 
There is a set $B_f$ such that $f_{A,B_f}$ is submodular on all $A \subseteq B_f$. We use Theorem~\ref{thm:33} to get the approximation ratio. 

If $g$ is submodular, then its restrictions are submodular, and we minimize the ratio of two submodular functions. Hence can use Theorem \ref{thm:29} to obtain the approximation ratio. 
\end{proof}

Next, we consider our Theorem~\ref{thm:main-result2} splitting both the numerator $f$ and the denominator $g$:  

\rsplitgreedyratiotwo* 

\begin{proof}[Proof of Theorem~\ref{thm:main-result2}]
Use $r = r_f+r_g$ in Algorithm \ref{alg:modified-ratio-1}. Then there is a $B_f$ such that $f_{A,B_f}$ is submodular and a $B_g$ such that $g_{A,B_g}$ is submodular. Let $B_{f,g} = B_f \cup B_g$. Then $|B_{f,g}| \le |B_f| + |B_g| = r_f + r_g$ and both $f_{A,B_{f,g}}$ and $g_{A,B_{f,g}}$ are submodular. We minimize a ratio of submodular functions, and use Theorem \ref{thm:29} for the approximation ratio. 
\end{proof}

\begin{rem}
We have assumed that we do not know the decomposition of $f$  into elementary submodular functions. 
However, if we knew the decomposition, then
we can extend any optimization for submodular functions to elementary submodular rank-$r$ functions, incurring a penalty of $2^r$. 
This gives another approach for set function optimization: first compute a low-rank approximation and then run our procedure on this low-rank approximation. 
We describe an algorithmic implementation of this in Appendix~\ref{app:details-computing-decomposition}. 
\end{rem}

\subsection{Comparison of Elementary Submodular Rank and Curvature Notions} 
\label{app:comparison} 

As mentioned in Remark~\ref{rem:properties-alpha-gamma}, for monotone increasing $f$, we have $\gamma \in [0,1]$, with $\gamma = 1$ iff $f$ is submodular. 
Values less than one correspond to violations of the diminishing returns property of submodularity. 
Moreover, for a monotone increasing function $f$, we have $\alpha \in [0,1]$ and $\alpha = 0$ if and only if $f$ is supermodular. Note this latter is a condition that the function is supermodular and not that the function is submodular. Thus, if $\alpha = 0$ and $\gamma = 1$, then $f$ is \emph{both} supermodular and submodular, and thus it is modular. 

\section{Details on the Experiments} 
\label{app:experiments}

\subsection{Types of Functions} 

We consider four types of objective functions. The first three are commonly encountered in applications. The fourth are random monotone functions. 

\paragraph{Determinantal functions.}
Let $\Sigma = XX^T$ be a positive definite matrix, where $X \in \mathbb{R}^{n \times d}$ is a Gaussian random matrix (i.e., entries are i.i.d.\ samples from a standard Gaussian distribution). To ensure $\Sigma$ is positive definite, we impose $d \ge n$. 
Given $S \subseteq [n]$, denote by $\Sigma_S$ the $|S| \times |S|$ matrix indexed by the elements in $S$. Given $\sigma \in \mathbb{R}$, we define 
\[
    f(S) := \det(I + \sigma^{-2}\Sigma_S), \quad S\subseteq [n]. 
\]
\citet[][Proposition 2]{bian2017guarantees} show that $f$ is supermodular. 
This type of functions appear in determinantal point processes, see \cite{10.5555/2481023}. 

\paragraph{Bayesian A-optimality functions.}
The Bayesian A-optimality criterion in experimental design seeks to minimize the variance of a posterior distribution as a function of the set of observations. 
Let $x_1, \ldots, x_n \in \mathbb{R}^d$ be $n$ data points. For any $S \subseteq [n]$, let $X_S\in\mathbb{R}^{n\times|S|}$ be the matrix collecting the data points with index in $S$. Let $\theta \sim \mathcal{N}(0, \beta^{-2}I)$ be a parameter and let $y_S = \theta^TX_S + \xi$, where $\xi \sim \mathcal{N}(0,\sigma^2 I)$. 
Let $\Sigma_{\theta | y_S}$ be the posterior covariance of $\theta$ given $y_S$. 
Then we define
\[
    f(S) = \Tr(\beta^{-2} I) - \Tr(\Sigma_{\theta|y_S}) = \frac{d}{\beta^2} - \frac{1}{\beta^2}\Tr((I + (\beta \sigma)^{-2} X_SX_S^T)^{-1}). 
\]
Maximizing $f$ identifies a set of observations that minimizes the variance of the posterior. \citet{bian2017guarantees} provide bounds on $\alpha$ and $\gamma$ for this function. 

\paragraph{Column subset selection.}
Given a matrix $A$, we ask for a subset $S$ of the columns that minimizes 
\[
  f(S) = \|A\|_F^2 - \|A_S A_S^\dag A\|_F^2 . 
\]
Here, $A_S^\dag$ is the Moore-Penrose pseudoinverse. Hence $A_SA_S^\dag$ is the orthogonal projection matrix onto the column space of $A_S$. 

\paragraph{Random functions.}
uniform random sample from $[0,1]$ of size $2^n$ and sort it in increasing order as a list $L$. 
We then construct a monotone function $f$ by assigning to $f(\emptyset)$ the smallest value in $L$ and  
then, for $M=1,\ldots, n$, assigning to $f(S)$, $S\subseteq [n]$, $|S|=M$, in any order, the next $\binom{n}{M}$ smallest elements of $L$. 

\subsection{Submodularity ratio and generalized curvature}
\label{app:submod-ratio-and-generalized-curvature}

Here we let $n = 8$. We sampled five different sample functions for each function type and computed $\alpha_r$ and $\gamma_r$ for $r = 0,1,2,3,4$, where $\alpha_0$ and $\gamma_0$ are by convention the generalized curvature and submodularity ratio for the original function. 

\paragraph{Determinantal.}

Here we first sample $X \in \mathbb{R}^{n \times n}$ with i.i.d.\ standard Gaussian entries. We then form $\Sigma = XX^T$. We also set $\sigma = 0.1$.

\paragraph{Bayesian A-optimality.}

Here we first sample $X \in \mathbb{R}^{60 \times n}$ with i.i.d.\ standard Gaussian entries. We use $\beta = 0.1$ and $\sigma = 0.1$

\paragraph{Column subset.}

Here we first sample $A \in \mathbb{R}^{20 \times n}$ with i.i.d.\ standard Gaussian entries. 

\paragraph{Random.}

There are no hyperparameters to set. 

Figure \ref{fig:alpha-gamma-k-app} shows $\alpha_r$ and $\gamma_r$. 

\begin{figure}
    \centering
    \includegraphics[width = 0.49\linewidth]{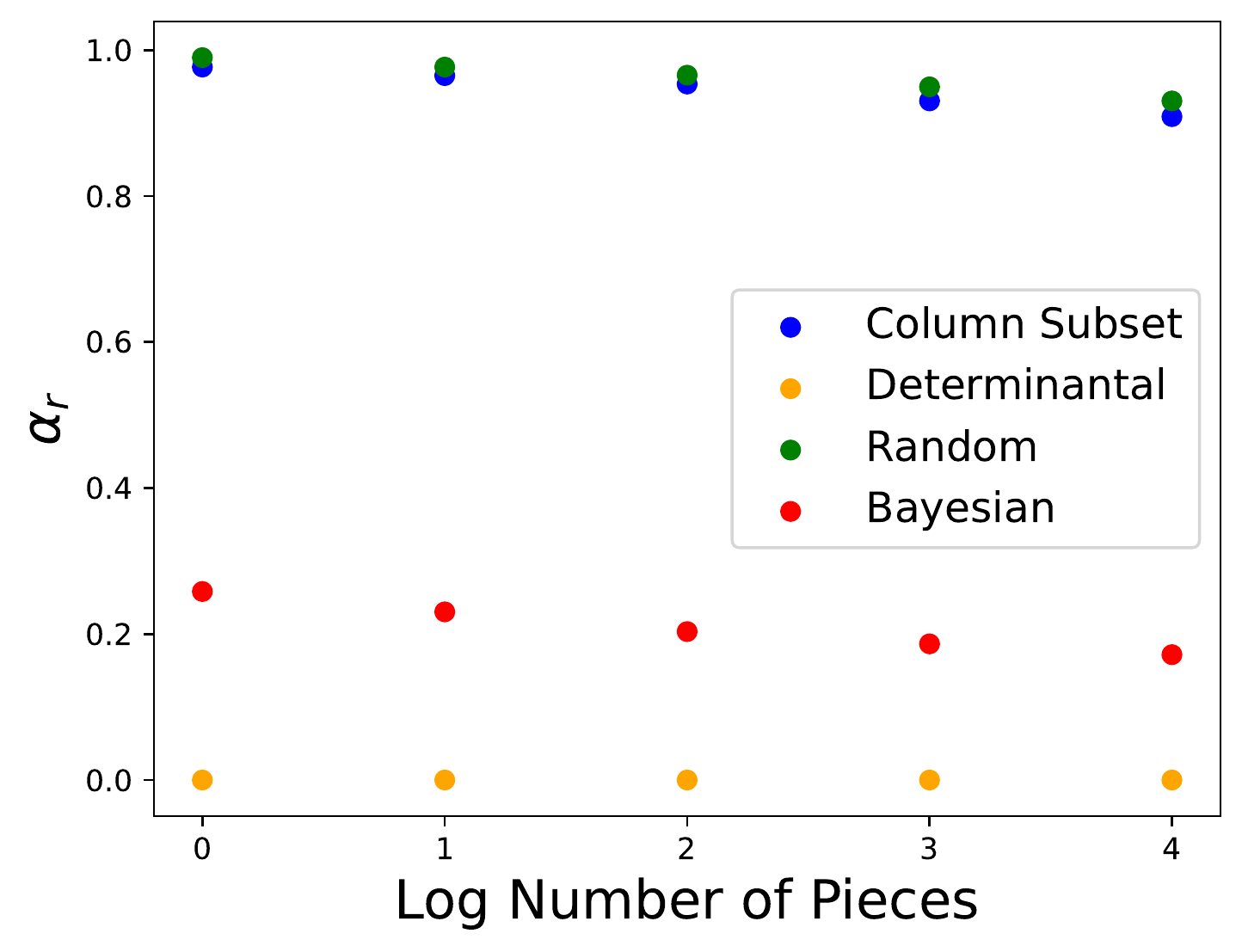}\hfill
    \includegraphics[width=0.49\linewidth]{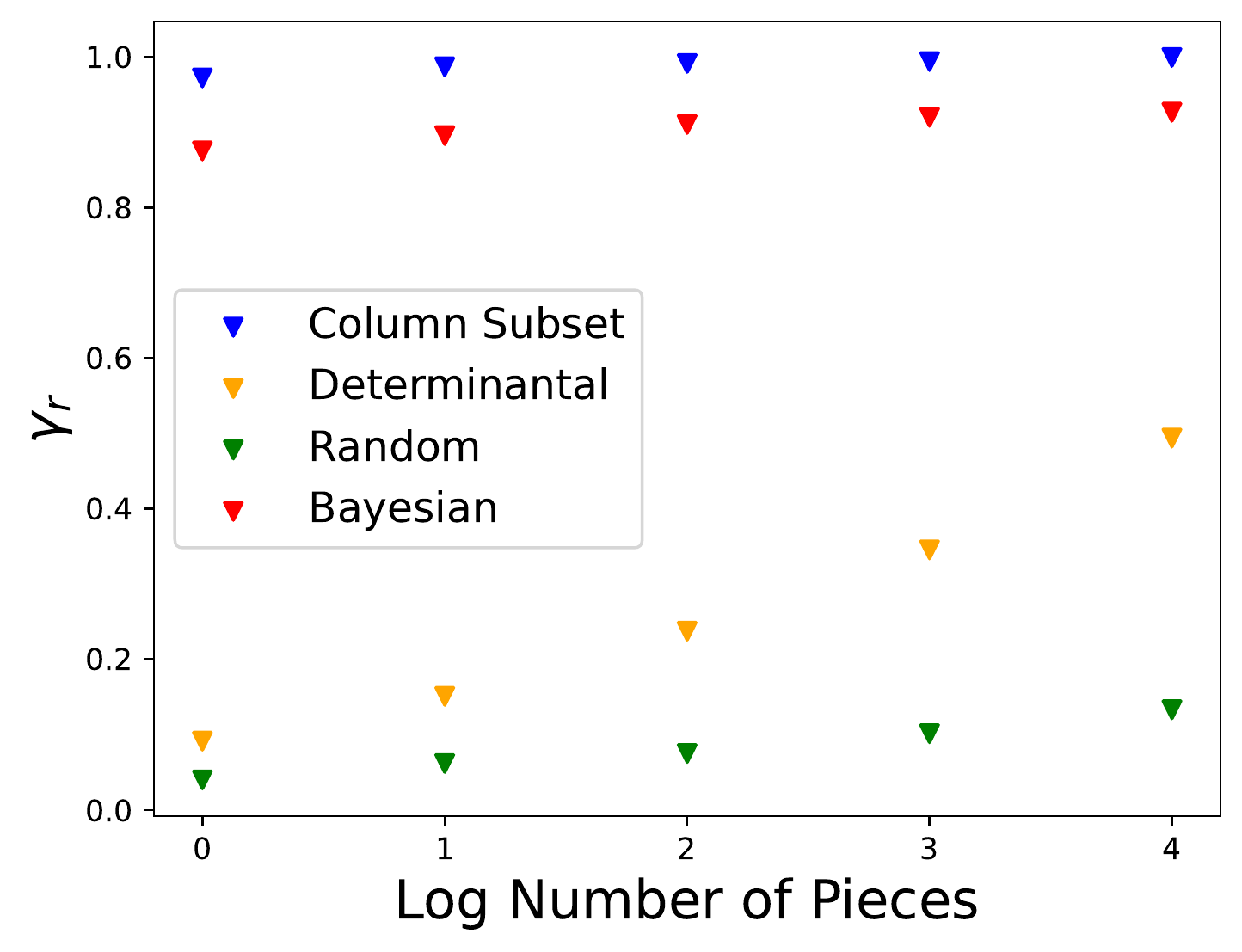}
    \caption{Shown are (a) $\alpha_r$ and (b) $\gamma_r$ for the four different function types for splitting into $1$, $2$, $4$, $8$, and $16$ pieces. }
    \label{fig:alpha-gamma-k-app}
\end{figure}

\subsection{Low Elementary Rank Approximations}

Here we let $n = 7$. We sampled 50 different sample functions for each function and then computed the low elementary rank approximation for $r+1 = 1,2,3,4,5,6,7$. Appendix \ref{sec:project} discusses the details of the algorithm used to compute the low-rank approximations. 

\paragraph{Determinantal.}

Here we first sample $X \in \mathbb{R}^{n \times 2n}$ with i.i.d.\ standard Gaussian entries. We then form $\Sigma = XX^T$. We also set $\sigma = 0.1$.

\paragraph{Bayesian A-optimality.}

Here we first sample $X \in \mathbb{R}^{60 \times n}$ with i.i.d.\ standard Gaussian entries. We use $\beta = 1$ and $\sigma = 0.01$

\paragraph{Column subset.}

Here we first sample $A \in \mathbb{R}^{60 \times n}$ with i.i.d.\ standard Gaussian entries. 

\paragraph{Random.}

There are no hyperparameters to set. 

\subsection{r-Split Greedy with small $n$}

Here we let $n = 20$. We sampled 50 different sample functions for each function and for each function, ran \textsc{Greedy} and \textsc{r-Split} with \textsc{Greedy} as the subroutine and $r=1,2,3$. 

\paragraph{Determinantal.}

Here we sample $X \in \mathbb{R}^{n \times 2n}$ with i.i.d.\ standard Gaussian entries. We then form $\Sigma = XX^T$. We also set $\sigma = 0.1$.

\paragraph{Bayesian A-optimality.}

Here we first sample $X \in \mathbb{R}^{60 \times n}$ with i.i.d.\ standard Gaussian entries. We use $\beta = 1$ and $\sigma = 0.01$

\paragraph{Column subset.}

Here we sample $A \in \mathbb{R}^{40 \times n}$ with i.i.d.\ standard Gaussian entries. 

\paragraph{Random.}

There are no hyperparameters to set. 

\subsection{r-Split Greedy with large $n$}

Here we let $n = 25,50,75,100,150,200,250,300,350,400,450,500$. We sampled five different sample functions for each function and, for each function, ran \textsc{Greedy} and \textsc{r-Split} with \textsc{Greedy} as the subroutine and $r=1$. 

\paragraph{Determinantal.}

Here we first sample $X \in \mathbb{R}^{n \times n}$ with i.i.d.\ standard Gaussian entries. We then form $\Sigma = XX^T$. We also set $\sigma = 1$.

\paragraph{Bayesian A-optimality.}

Here we first sample $X \in \mathbb{R}^{60 \times n}$ with i.i.d.\ standard Gaussian entries. We use $\beta = 0.1$ and $\sigma = 0.1$

\paragraph{Column subset.} We use MNIST dataset for this problem. 

\subsection{Initial Seed Greedy}

To compare algorithms with the same time dependence on $n$, 
we compare against a method we term \textsc{Initial Seed Greedy}. 
This algorithm provides an initial set to \textsc{Greedy}. That is, instead of starting at the empty set, it starts at a provided set. We then compare \textsc{Initial Seed Greedy} by providing all $\binom{n}{r}$ seeds and compare to \textsc{r-split Greedy}. 
 
For $r=1$ and the \textsc{Determinantal Function}, we ran 100 trials for $n = 25,50,75$,$100,150,200$. We saw that the two methods returned the same solution for each $n$ for most trials. However, for each $n$, for 2 to 5 trials, \textsc{r-split Greedy} outperformed \textsc{Initial Greedy} and found solutions that were between 0.3\% and 7\% better.

\subsection{Computer and Software Infrastructure} 
We run all our experiments on Google Colab using libraries Pytorch, Numpy, and Itertools, which are available under licenses Caffe2, BSD, and CCA. 
Computer code for our algorithms and experiments is provided in \href{https://anonymous.4open.science/r/Submodular-Set-Function-Optimization-8B0E/README.md}{https://anonymous.4open.science/r/Submodular-Set-Function-Optimization-8B0E/README.md}.

\section{Details on Computing Volumes} 
\label{app:computing-volume}

The suprmodular rank $r$ functions on $\{ 0, 1\}^n$ are a union of polyhedral cones in $\mathbb{R}^{2^{[n]}}$.
We estimate their relative volume. 
We list the inequalities for each of the Minkowski sums of supermodular cones. 
We then sample $500{,}000$ random points from a standard Gaussian,  test how many of the points live in the union of the cones, and report the percentage. 
Table~\ref{tab:volume} shows these estimates. We see that the volume of the cones decreases with the number of variables $n$ and increases rapidly with the rank $r$. 
For instance, when $n=4$ the volume of the set of submodular rank-$2$ functions, 5.9\%, is nearly $1000$ times larger than the volume of the set of submodular rank-$1$ functions, 0.0072\%. 

One may wonder if it is possible to obtain a closed form formula for these volumes. 
The relative volume, or solid angle, of a cone $C\subseteq\mathbb{R}^d$
is defined by
$$
\Vol(C) :=  \int_{C \cap B}  d\mu(x) / \int_B d\mu(x), 
$$ 
where $B = \{x\in\mathbb{R}^d\colon \|x\|\leq 1\}$ is the unit ball. 
In general there are no closed form formulas available for such integrals,  even when $C$ is polyhedral. 
For simplicial cones there exist Taylor series expansions that, under suitable conditions, can be evaluated to a desired truncation level \cite{Ribando2006measuring}. 
Instead of triangulating the cone of functions of bounded supermodular rank and then approximating the volumes of the simplicial components via a truncation of their Taylor series, we found it more reliable to approximate the solid angle 
by sampling. 
In our computations described in the first paragraph, we use 
$$
\Vol(C) = \int_{C} p(x) d\mu(x) \approx \frac{1}{N}\sum_{i=1}^N \chi_{C}(x_i) , 
$$
where $C$ is the cone of interest (e.g., the cone of supermodular rank-$r$ functions), 
$\chi_C$ is the indicator function of the cone, 
$p$ is the probability density function of a zero centered isotropic Gaussian random variable and $x_i$, $i=1,\ldots, N$ is a random sample thereof. 
To evaluate $\chi_C(x)$ we check if $x$ satisfies the facet-defining inequalities of any of the Minkowski sums that make up the cone, described in Theorem~\ref{cor:facets-minkowski-sums}. 

\paragraph{Upper bound on the volume of the supermodular cone.} 

We use the correspondence in Proposition~\ref{prop:submodular-restriction} to upper bound the volume of the supermodular cone on $n$ variables, as follows. 
There are $2^{n-1}$ distinct $\pi$-supermodular cones, all of the same volume and with disjoint interiors. Hence 
the volume of $\mathcal{L}_{(1,\ldots, 1)}$ is upper bounded by $2^{-n+1}$, which decreases with linear rate as $n$ increases. 
We show that the volume decreases at least with quadratic rate. 

\begin{prop}
\label{prop:volume-decay}
The relative volume of $\mathcal{L}_{(1,\ldots, 1)} \subseteq \mathbb{R}^{\{0,1\}^n}$ is bounded above by $0.85^{2^n}$. In particular, it decreases at least with quadratic rate as $n$ increases. 
\end{prop}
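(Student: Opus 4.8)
The plan is to realize the relative volume as a Gaussian orthant probability and then bound it using only a carefully chosen independent subfamily of the defining inequalities. By the sampling description of $\Vol$ recalled just above the statement,
$$
\Vol(\mathcal{L}_{(1,\ldots,1)}) = \Pr_{f}\!\left[\, A^{(ij)} f \ge 0 \text{ for all } i \neq j \,\right],
$$
where $f \in \mathbb{R}^{\{0,1\}^n}$ has i.i.d.\ standard Gaussian coordinates $f(S)$. Since intersecting with fewer half-spaces only enlarges the event, it suffices to upper bound the probability that the single block of inequalities $A^{(12)} f \ge 0$ holds, and discard all the others.

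First I would exhibit the independence structure inside the block $A^{(12)}$. Its rows are indexed by $z \subseteq [n]\setminus\{1,2\}$, and the $z$-th inequality involves exactly the four coordinates $f(z)$, $f(z\cup\{1\})$, $f(z\cup\{2\})$, $f(z\cup\{1,2\})$, namely the vertices of the square obtained by fixing coordinates $3,\ldots,n$ to $z$ and letting coordinates $1$ and $2$ vary. As $z$ ranges over all $2^{n-2}$ subsets of $[n]\setminus\{1,2\}$, these four-element coordinate sets are pairwise disjoint and partition all $2^n$ coordinates of $f$.

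The key step then uses this disjointness. Because the coordinates $f(S)$ are i.i.d.\ and the $2^{n-2}$ inequalities depend on disjoint coordinate blocks, the corresponding events are mutually independent. Each event is $\{G_z \ge 0\}$ for the centered, non-degenerate Gaussian $G_z := f(z) + f(z\cup\{1,2\}) - f(z\cup\{1\}) - f(z\cup\{2\})$, which by symmetry about $0$ has probability exactly $\tfrac12$. Multiplying over the $2^{n-2}$ blocks gives
$$
\Vol(\mathcal{L}_{(1,\ldots,1)}) \le \left(\tfrac12\right)^{2^{n-2}} = \left(2^{-1/4}\right)^{2^n} \le 0.85^{2^n},
$$
since $2^{-1/4} = 0.8409\ldots < 0.85$. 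The quadratic-rate claim is then immediate from the form of the bound: writing $b_n := 0.85^{2^n}$, one has $b_{n+1} = b_n^2$, so the upper bound is squared at each increment of $n$.

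I expect the main obstacle to be conceptual rather than computational: recognizing that one should throw away almost all of the $\binom{n}{2}2^{n-2}$ constraints and keep only a maximal independent (disjoint-support) subfamily, so that the joint probability factors exactly. Once the partition of the hypercube vertices into the $2^{n-2}$ squares through a fixed coordinate pair is spotted, the independence and the per-block probability $\tfrac12$ are routine, and the numerical check $2^{-1/4} < 0.85$ closes the argument.
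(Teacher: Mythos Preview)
Your proof is correct, and its core computation---that the cone $\{f : A^{(12)} f \geq 0\}$ has relative volume exactly $2^{-2^{n-2}}$---coincides with the paper's. However, you reach it by a more elementary route. You observe directly that the $2^{n-2}$ rows of $A^{(12)}$ have pairwise disjoint supports, so under an isotropic Gaussian the corresponding half-space events are independent with probability $\tfrac12$ each. The paper instead identifies this same cone as the Minkowski sum $\mathcal{L}_0^{(n)} + \mathcal{L}_{i_1}^{(n)} + \cdots + \mathcal{L}_{i_{n-2}}^{(n)}$ via Theorem~\ref{cor:facets-minkowski-sums}, and then uses the correspondence of Proposition~\ref{prop:submodular-restriction} to view it as a Cartesian product of $2^{n-2}$ copies of $\mathcal{L}_0^{(2)}$, each of volume $\tfrac12$. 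The paper's detour through Minkowski sums buys one extra ingredient you do not use: since the $n-1$ summand cones have equal volume and pairwise disjoint interiors, one obtains $(n-1)\Vol(\mathcal{L}_0^{(n)}) \leq 2^{-2^{n-2}}$, an additional factor of $1/(n-1)$ in the final bound. Your argument forgoes this sharpening but is entirely self-contained and needs none of the paper's structural machinery on sums of supermodular cones.
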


\begin{proof}
We define $\mathcal{L}_0^{(n)}:=\mathcal{L}_{(1,\ldots, 1)}$ and denote the $i^{th}$ elementary supermodular cone on $n$ variables by $\mathcal{L}_i^{(n)}$. 
For distinct $i_1,\ldots, i_{r}\in[n]$,  the cones $\mathcal{L}_{0}^{(n)},\mathcal{L}_{i_1}^{(n)}, \ldots, \mathcal{L}_{i_{r}}^{(n)}$ have the same volume and disjoint interiors, and thus 
$$
(r+1) \Vol(\mathcal{L}_{0}^{(n)}) \leq \Vol(\mathcal{L}_{0}^{(n)} + \mathcal{L}_{i_1}^{(n)}+\cdots + \mathcal{L}_{i_{r}}^{(n)}) . 
$$
A function $f$ in $\mathcal{L}_{0}^{(n)} + \mathcal{L}_{i_1}^{(n)}+\cdots + \mathcal{L}_{i_{r}}^{(n)}$ consists of $2^r$ supermodular pieces on $n-r$ variables. That is, each pieces $f$ lies in $\mathcal{L}_0^{(n-r)}$. Thus $f$ lives in a Cartesian product of $2^r$ cones $\mathcal{L}_0^{(n-r)}$. 
Conversely, a function $f$ that lives in this product of cones lies in the Minkowski sum $\mathcal{L}_{0}^{(n)} + \mathcal{L}_{i_1}^{(n)}+\cdots + \mathcal{L}_{i_{r}}^{(n)}$. Hence
$$
\Vol(\mathcal{L}_{0}^{(n)} + \mathcal{L}_{i_1}^{(n)}+\cdots + \mathcal{L}_{i_{r}}^{(n)}) = \Vol(\mathcal{L}_{0}^{(n-r)})^{2^r}.
$$
Taking $r=n-2$, and noting that $\mathcal{L}_0^{(2)}=\frac12$, we obtain
\begin{equation}
\Vol(\mathcal{L}_{0}^{(n)}) \leq \frac{2^{-2^{n-2}}}{n-1} \leq 0.85^{2^n}, \quad n\geq 2. 
\label{eq:vol-decrease}
\end{equation}
In particular, the relative volume of $\mathcal{L}_{(1,\ldots, 1)} \subseteq \mathbb{R}^{2^{[n]}}$ decreases at least with quadratic rate. 
\end{proof} 
Although \eqref{eq:vol-decrease} significantly improves the trivial upper bound, it is not clear how tight it is. 
For $n=2$, the expression $\frac{2^{-2^{n-2}}}{n-1}$ in \eqref{eq:vol-decrease} equals 50\%, which agrees with the true volume of $\mathcal{L}_0^{(2)}$. However, for $n=3$ and $n=4$ the values 12.5\% and 2.1\% it provides are larger than the experimentally obtained 3\% and 0.0006\% reported in Table \ref{tab:volume}. 
Nonetheless, the result shows that the relative volume of supermodular functions is tiny in high dimensions. This provides additional support and motivation to study relaxations of supermodularity such as our supermodular rank. 

We are not aware of other works discussing the relative volume of supermodular cones. Following the above discussion, we may pose the following question: 

\begin{problem}
What is the relative volume of the cone of supermodular functions on $2^{[n]}$ in $\mathbb{R}^{2^{[n]}}$, and what is the asymptotic behavior of this relative volume as $n$ increases?  
\end{problem}

\section{Relations of Supermodular Rank and Probability Models}
\label{app:relations-probability-models}

Probabilistic graphical models are defined by imposing conditional independence relations between the variables that are encoded by a graph, see \cite{lauritzen1996graphical}. 
Supermodularity (in)equalities arise naturally in probabilistic graphical models. We briefly describe these connections. 
We consider finite-valued random variables, denoted $X_i$, which take values denoted by lower case letters $x_i$. 

\paragraph{Conditional independence and modularity.} 
Two random variables $X_1$ and $X_2$ are conditionally independent given a third variable $X_3$ if, for any fixed value $x_3$ that occurs with positive probability, the matrix of conditional joint probabilities $p(x_1,x_2|x_3) = p(x_1,x_2,x_3)/p(x_3)$  
factorizes as a product of two vectors of conditional marginal probabilities,  
$$
p(x_1,x_2|x_3) = p(x_1|x_3) p(x_2|x_3) ,
$$
for all $x_1$ in the range of values of $X_1$ and $x_2$ in the range of values of $X_2$.
This means that the matrix of conditional joint probabilities 
has rank one, or, equivalently, that its $2\times 2$ minors vanish. 
The vanishing of the $2 \times 2$ minors is the requirement that any submatrix obtained by looking at two rows and two columns has determinant zero, 
$$
p(x_1,x_2|x_3) p(x'_1,x'_2|x_3) - p(x_1,x'_2|x_3) p(x'_1,x_2|x_3) = 0, 
$$ 
for any two rows $x_1,x'_1$ and any two columns $x_2,x'_2$. 
Inserting the definition of conditional probabilities $p(x_1,x_2|x_3)= p(x_1,x_2,x_3)/p(x_3)$, moving the negative term to the right hand side, 
multiplying both sides by $p(x_3) p(x_3)$ and taking the logarithm, 
the rank one condition is rewritten in terms of log probabilities as 
$$
\log p(x_1,x_2,x_3)+\log p(x'_1,x'_2,x_3) = \log p(x_1,x'_2,x_3)+\log p(x'_1,x_2,x_3), 
$$
for all $x_1,x_1'$ in the range of values of $X_1$, all $x_2,x_2'$ in the range of values of $X_2$, and all $x_3$ in the range of values of $X_3$. Thus, with an appropriate partial order on the sample space, a conditional independence statement corresponds to modularity equations for log probabilities. 

\paragraph{Latent variables and supermodularity.} 

If some of the random variables are \emph{hidden} (or \emph{latent}), characterizing the visible marginals in terms of (in)equality relations between visible margins becomes a challenging problem \citep[see, e.g.,][]{garcia2005algebraic,
allman2015tensors, 
Zwiernik2015SemialgebraicSA, 
montufar2015when, 
qi2016semialgebraic, 
evans2018margins, 
Seigal2018MixturesAP}. 
In several known cases, such descriptions involve conditional independence inequalities that correspond to submodularity or supermodularity inequalities of log probabilities. 

\citet{allman2015tensors} studied $n$ discrete visible variables that are conditionally independent given a binary hidden variable.
This is known as the $2$-mixture of a $n$-variable independence model, and denoted by $\mathcal{M}_{n,2}$. 
The article \cite{allman2015tensors} shows that the visible marginals of $\mathcal{M}_{n,2}$ are characterized by the vanishing of certain $3\times 3$ minors (these are equalities) 
and conditional independence inequality relations that impose that the log probabilities are $\pi$-supermodular for some partial order $\pi$. Thus, the set of log probabilities defined by inequalities of $\mathcal{M}_{n,2}$, discarding the equalities, gives a union of $\pi$-supermodular cones. 

\paragraph{Restricted Boltzmann machines and sums of supermodular cones.} 
A prominent graphical model is the restricted Boltzmann machine (RBM) \citep{Smolensky1986,Hinton2002}. 
The model $\RBM_{n,\ell}$ has $n$ visible and $\ell$ hidden variables, and defines the probability distributions of $n$ visible variables that are the Hadamard (entrywise) products of any $\ell$ probability distributions belonging to $\mathcal{M}_{n,2}$. Characterizing the visible marginals represented by this model has been a topic of interest, see for instance \citep{6796877,NIPS2011_8e98d81f,NIPS2013_7bb06076}. In particular, \citet{cueto2010geometry,doi:10.1137/16M1077489} studied the dimension of this model, and \cite{montufar2015when,MONTUFAR2017531} investigated certain inequalities satisfied by the visible marginals. 

\citet{Seigal2018MixturesAP} obtained a full description of the $\RBM_{3,2}$ model. In this case there are no equations, and the set of visible log probabilities is the union of Minkowski sums of pairs of $\pi$-supermodular cones. They proposed that one could study RBMs more generally in terms of inequalities and, to this end, proposed to study the Minkowski sums of $\pi$-supermodular cones, which remained an open problem in their work. 
We have provided a characterization of these sums in Theorem~\ref{cor:facets-minkowski-sums}. 

Proposition~\ref{prop:supermodular-rank-submodular} implies that the model $\RBM_{n,\ell}$ is not a universal approximator whenever $\ell < \lceil \log_2 n\rceil+1$. 
This does not give new non-trivial bounds for the minimum size of a universal approximator for $n\geq 4$, since the number of parameters of the model, $(n+1)(\ell+1)-1 $, is smaller than the dimension of the space, $2^n - 1$. 
However, the result shows that any probability distributions on $\{0,1\}^n$ whose logarithm is in the interior of the submodular cone requires at least $\ell = \lceil \log_2(n)\rceil$ to lie in $\RBM_{n,\ell}$. 
This complements previous results based on polyhedral sets called mode poset probability polytopes \citep{montufar2015when,modeposet2015}.

\end{document}